\documentclass[11pt,reqno]{amsart}

\usepackage[a4paper]{geometry}
 \usepackage[utf8]{inputenc}
\usepackage[T1]{fontenc}
\usepackage{lmodern}
\usepackage{amsmath,amssymb}
\usepackage{amsthm}
\usepackage{paralist}
\usepackage{float}
\usepackage{accents}
\usepackage{color}
\usepackage[authoryear]{natbib}
\usepackage{bm}
\usepackage{lscape}
\usepackage{subfig}
\usepackage{graphicx}
\usepackage{booktabs}
\bibliographystyle{apalike}

\numberwithin{equation}{section}
\usepackage{dsfont}
\usepackage[colorlinks=true,linkcolor=blue,citecolor=blue,pdfborder={0 0 0}]{hyperref}

\newcommand{\id}{\mathds{1}}

\renewcommand{\epsilon}{\varepsilon}
\newcommand{\eps}{{\varepsilon}}
\renewcommand{\phi}{\varphi}
%\renewcommand{\theta}{\vartheta}

% General
\newcommand{\R}{\mathbb{R}}
\newcommand{\Z}{\mathbb{Z}}
\newcommand{\N}{\mathbb{N}}

\newcommand{\pr}{\mathbb{P}}        % probability
        % probability
\newcommand{\ex}{\mathbb{E}}        % expectation

        % expectation
\newcommand{\var}{\textnormal{Var}} % variance
\newcommand{\cov}{\textnormal{Cov}} % covariance

\newcommand{\MSE}{\textnormal{MSE}}

\newcommand{\vertiii}[1]{{\left\vert\kern-0.25ex\left\vert\kern-0.25ex\left\vert #1
    \right\vert\kern-0.25ex\right\vert\kern-0.25ex\right\vert}}

\newcommand{\Ac}{\mathcal{A}}

\newcommand{\Dc}{\mathcal{D}}

\newcommand{\Fc}{\mathcal{F}}

\newcommand{\Lc}{\mathcal{L}}

\newcommand{\Nc}{\mathcal{N}}
\newcommand{\Oc}{\mathcal{O}}

\newcommand{\argmin}{\textnormal{argmin}}

\newcommand{\diff}{{\mathrm{d}}}
\newcommand{\supp}{\textnormal{supp}}

\newcommand{\scs}{\scriptscriptstyle}

% Convergence
\newcommand{\convd}{\rightsquigarrow}              % convergence in distribution
\newcommand{\convw}{\convd}                           % weak convergence
              % convergence in distribution
              % convergence in probability
              % convergence in probability
 % convergence almost surely

\newtheorem{theorem}{Theorem}[section]

\newtheorem{lemma}[theorem]{Lemma}
\newtheorem{corollary}[theorem]{Corollary}

\theoremstyle{definition}

\newtheorem{alg}[theorem]{Algorithm}
\newtheorem{assumption}[theorem]{Assumption}

\newtheorem{remark}[theorem]{Remark}

\definecolor{gray}{gray}{0.6}

\parskip 3pt
\allowdisplaybreaks[4]

\begin{document}

\parindent 0cm

\title[Structural breaks in locally stationary processes]{
A distribution free test for changes  in  the trend function of locally stationary processes}

\author{Florian Heinrichs}
\author{Holger Dette}
\address{Ruhr-Universit\"at Bochum, Fakult\"at f\"ur Mathematik, Universit\"atsstr.\ 150, 44780 Bochum, Germany.}
\email{florian.heinrichs@rub.de}
\email{holger.dette@rub.de}

\begin{abstract}

In the common time series  model  $X_{i,n}  = \mu (i/n) + \varepsilon_{i,n}$  with    non-stationary errors we  consider the problem of 
detecting a significant    deviation  of  the mean function $\mu$  from a benchmark $g (\mu )$  (such as the initial value $\mu (0)$ or the  average trend  $\int_{0}^{1} \mu (t) dt$).
The problem is motivated by a more realistic modelling of change point analysis, where one is interested in identifying relevant deviations in  a smoothly varying  sequence of means $  (\mu (i/n))_{i =1,\ldots ,n }$
and cannot assume that the sequence is piecewise constant.
A test for this type of hypotheses is developed  using an appropriate estimator for  the 
integrated squared deviation of   the mean function and the threshold.
By a new concept of self-normalization adapted to non-stationary processes an asymptotically  pivotal test for the hypothesis of a relevant deviation is constructed.
The results are illustrated by means of a simulation study and a  data example.
			
%			================
%Classical change point analysis aims at (1) detecting abrupt changes in the mean of a possibly non-stationary time series and at (2) identifying regions where the mean exhibits a piecewise constant behavior. In many applications however, it is more reasonable to assume that the mean changes gradually in a smooth way. Those gradual changes may either be non-relevant (i.e., small), or relevant for a specific problem at hand, and the present paper presents statistical methodology to detect the latter. More precisely, we consider the common nonparametric regression model  $X_{i}  = \mu (i/n) + \varepsilon_{i}$  with possibly non-stationary errors and propose a test for the null hypothesis that the maximum absolute deviation  of  the regression function $\mu$  from a functional $g (\mu )$  (such as the value $\mu (0)$ or the integral $\int_{0}^{1} \mu (t) dt$) is smaller than a given threshold. A test for this type of hypotheses is developed  using an appropriate estimator, say $\hat d_{\infty, n}$, for  the maximum deviation $ d_{\infty  }= \sup_{t \in [a,b]} |\mu (t)  -g( \mu) |$, where $0 \le a<b\le 1$ are given constants. We derive the limiting distribution of an appropriately standardized version of $\hat d_{\infty,n}$, where the standardization depends on the Lebesgue measure of the set of extremal points of the function $\mu(\cdot)-g(\mu)$. A refined  procedure based on an estimate of this set is developed and its consistency is proved. The results are illustrated by means of a simulation study and a  data example.
%

\medskip

\noindent \textit{Key words:} change point analysis,  local stationary processes, nonparametric regression 

\end{abstract}

\date{\today}

\maketitle

% \tableofcontents

%% ====================
\section{Introduction} \label{sec:intro}
%% ====================

Within the last decades, the detection of structural breaks in time series has become a very active area of research with many
 applications in fields like climatology, economics, engineering, genomics, hydrology, etc. \citep[see][among  many others]{auehor2013,Jandhyala2013,WoodallMontgomery2014,Sharma2016,ChakrabortiGraham2019,truong2020}. In the simplest case, one is interested in detecting structural breaks in the sequence of means $(\mu_i)_{i=1,\dots,n}=(\mu(i/n))_{n \in \mathbb{N}}$ of a time series $(X_{i,n})_{i=1,\dots,n}$
 corresponding to a location model of the form
\begin{equation}\label{eq:model}
X_{i,n}=\mu(i/n)+\eps_{i,n}~,~~i=1, \ldots , n~.
\end{equation}
A large part of  the literature considers the problem of detecting changes in a piecewise constant
mean function $\mu:[0,1]\to \R$, where early references assume the existence  of
at most one change point \citep[see, e.\,g.][among others]{Priestley1969, Wolfe1984, Horvath1999} and more recent literature
investigates  multiple change points \citep[see, e.g.][among  many others]{frick2014,fryzlewicz2018a,detschvet2018,baranowski2019}.
The   errors $(\eps_{i,n})_{i=1,\dots,n}$  in model \eqref{eq:model} are usually assumed to form at least  a stationary process and many theoretical results
for detecting multiple change points  are only available for independent identically distributed error processes.
These assumptions  simplify the statistical analysis of structural breaks substantially, as - after removing the piecewise constant
trend - one can
work under the assumption of a stationary or an  independent identically distributed error process
and smoothing is not necessary to estimate the trend function.

On the other hand, the assumption of a strictly piecewise constant mean function might not be realistic in
  many  situations and  it might be  more reasonable to assume that $\mu$ varies smoothly rather than abrupt.
  A typical example is temperature data \citep[see, e.\,g.][]{karl1995,collins2000}
  where  it might be of more  interest to investigate whether the mean function deviates fundamentally from a given benchmark denoted by  $g(\mu)$.
  Here $g$ is a functional of the mean function, such as the value at the point $0$, that is
  $
  g(\mu) = \mu (0),
  $ or an average over a certain time period, that is
\begin{equation}
  \label{hol3}
  g(\mu) = \frac{1}{t_1-t_0}\int_{t_0}^{t_1} \mu (x) dx
  ~~\text{ for some } ~~0 \leq t_{0} <  t_{1}  \leq 1
\end{equation}
   (see Section \ref{sec2} for more details).
  Moreover,
there also exist  many time series exhibiting  a non-stationary behaviour in the higher order moments and dependence structure   \citep[see][among others]{stuaricua2005nonstationarities,elsner2008increasing,guillaumin2017analysis}, and the
detection  of fundamental deviations from a benchmark in a sequence of gradually changing means
under  the assumption of a location model with a stationary error process  might be misleading.

In this paper we propose a distribution free test for relevant deviations of the mean function $\mu$ from a given benchmark $g(\mu)$ in a
location scale model of the form \eqref{eq:model} with a non-stationary error process. More precisely, for some pre-specified threshold $\Delta > 0$ we are interested in testing the hypotheses
\begin{equation}\label{h0:rel}
H_0: d_0 = \Big (\int_0^1 \big(\mu(x)-g(\mu)\big)^2\tau (\diff x )\Big )^{1/2} \leq \Delta \quad \text{vs.} \quad H_1: d_0>\Delta,
\end{equation}
where $\tau$ is an appropriate measure on the interval $[0,1]$ chosen by the statistician. This means that we are looking for  ``substantial''  deviations of   the  mean function  from a given benchmark  $g(\mu)$
 in an $L^2$-sense. The choice of the threshold depends on the particular application and is related  to a balance between bias and variance
 as the detection of deviations from a (constant) mean often results in an adaptation of the statistical analysis (for example in forecasting).
As such an  analysis is  performed ``locally'',  resulting estimators will have a smaller bias
but a larger variance. However, if the changes in the signal are only weak, such an adaptation might not be necessary
because a potential  decrease in bias might be overcompensated by an increase of variance.

 In principle, a test for the hypotheses in \eqref{h0:rel} could be developed  using a nonparametric estimate of  the mean function $\mu$ to obtain an estimate, say $\hat d_0$, of the distance $d_0$. The null hypothesis in \eqref{h0:rel} is then rejected for large values of  $\hat d_0$. However, the distribution of the test statistic will depend in an intricate way on the dependence structure  of the non-stationary error process  in model \eqref{eq:model}, which is difficult to estimate. To address this problem we will introduce a new  concept of self-normalization  and construct an (asymptotically) pivotal test statistic for the hypotheses in  \eqref{h0:rel}.
 The basic idea  of our approach is to permute the data and consider the partial sum process of this permutation, thus, taking into account observations over the whole interval rather than only the first observations.  The new concept and the asymptotic properties of the standardized statistic can be found in Section \ref{sec3}, while some details on the testing problem and
 mathematical background on locally stationary processes are  introduced in Section \ref{sec2}.
In  Section \ref{sec4} we  investigate the  finite sample properties of the proposed testing procedure by means of a simulation study and provide an application to temperature data.
Finally, in Section \ref{secA}, the proofs of  the theoretical results in Section \ref{sec3} are presented.

 \subsection{Related literature} Despite of its importance the problem of detecting relevant deviations in a sequence of gradually changing means has only been considered by a few authors.  \cite{dettewu2019} investigate a mass excess approach for this problem. More precisely, these authors measure deviations from the benchmark by
 the Lebesgue measure of the set $\{t\in[0,1]:|\mu(t)-g(\mu)|>\Delta\}$ and test whether this quantity exceeds a certain threshold $c>0$.  Their approach requires estimation of the
 local long-run variance and multiplier bootstrap.
 More recently,   \cite{bucher2020} propose   the maximal distance to measure relevant  deviations from the benchmark and consider the null hypothesis
 $H_0: \sup_{t\in[0,1]}|\mu(t)-g(\mu)| \leq \Delta$. While the maximum deviation might be easy to interpret for practitioners,
 the asymptotic analysis of a corresponding estimate is challenging. In particular  it requires an estimation of the long-run variance and additionally the estimation
 of the sets, where the absolute difference $|\mu(t)-g(\mu)| $ attains its sup-norm. The methodology proposed here avoids the problem of estimating tuning parameters of this
 type using an $L^2$-norm in combination with a new concept of self-normalization.

Ratio statistics or self-normalization have  been  introduced by \cite{horvath2008} and
 \cite{Shao2010b} in the context of change point detection in stationary processes and
avoid a direct estimation of the long-run variance through a convenient rescaling of the test statistic.
The currently available self-normalization procedures are based on  partial sum processes  \citep[see][for a recent review]{Shao2015},
which usually (under the assumption of stationarity) have a limiting process of the form $\{ \sigma W(\lambda) \}_{\lambda \in [0,1]}$,
where $\{ W(\lambda)\}_{\lambda\in[0,1]}$ is a known stochastic process and $\sigma$  an unknown factor encapsulating the
dependency structure of the underlying process. In this case the factorisation of the limit into the long-run variance and a probabilistic term is used to construct a
pivotal test statistic by forming a ratio such that the factor $\sigma$  in the numerator and denominator cancels. However,
in the case of  non-stationarity, the situation is more  complicated, because the limiting process is of the
form $\big \{  \int_0^\lambda \sigma(u)\diff W(u) \big\}_{\lambda \in [0,1]}$ such that the probabilistic
 and the part representing the dependence structure cannot be separated.
 \cite{zhao2013} and \cite{rho2015} discuss in fact these problems in the context of locally stationary time series, but the proposed self-normalizations need to be combined with a wild bootstrap.
 In this paper, we present a full self-normalization procedure for non-stationary time series, which might be also useful for testing classical hypotheses.

	%% ====================
	\section{The testing problems and mathematical Preliminaries} \label{sec2}
	%% ====================

Throughout  this paper  $\Lc^2([0,1])$ denotes the space of real-valued square-integrable functions on $[0,1]$ and $L^2([0,1])$ the corresponding normed vector space of equivalence classes. Let $\langle f,g\rangle = \int_0^1 f(x)g(x)\diff x$ denote the scalar product in $L^2([0,1])$ and $\|f\|_2 = \langle f,f\rangle^{1/2}$ the corresponding norm, for $f,g\in L^2([0,1])$. Further, $\langle f,g\rangle_\tau = \int_0^1 f(x)g(x)\tau(\diff x)$ and $\|f\|_{2,\tau}=\langle f,f\rangle_\tau^{1/2}$, for $f,g\in L^2([0,1],\tau)$. Finally, for the sake of readability, for functions in $\Lc^2([0,1])$, we denote the integral $\int_0^1 f(x)g(x)\diff x$ by $\langle f,g\rangle$. 
Finally, if $X$ is  a real-valued random variable we use the notation (in the case of existence)  $\|X\|_{q,\Omega} = \big(\ex[|X|^q]\big)^{1/q}$, for $q\geq 1$.

\subsection{Relevant deviations in a sequence of gradually changing means} \label{sec21}

Recall the definition of model  \eqref{eq:model} and the hypotheses \eqref{h0:rel}. Different
benchmarks may be of interest in applications.
For example, if one  is interested in deviations from the value of the mean function at a given  time, say  $t \in [0,1]$, one could choose $g(\mu) = \mu (t)$,
while relevant deviations from an average over a certain time period are obtained for the choice \eqref{hol3}.
In  particular if   $t_{0}$,  $t_{1}$ and  $\tau$ 
are chosen $0$, $1$ and 
the Lebesgue measure, respectively,  one compares
 the local mean $\mu(x)$ with the overall mean $ g(\mu ) =   \bar{\mu} = \int_0^1 \mu(y)\diff y$  and the hypotheses
 in \eqref{h0:rel} read as follows
$$ H_0:
\Big ( \int_0^1 \big(\mu(x)-\bar{\mu}\big)^2 \diff x \Big )^{1/2}\leq \Delta~\quad \text{vs.}\quad  \Big (\int_0^1 \big(\mu(x)-\bar{\mu}\big)^2 \diff x \Big )^{1/2}> \Delta.
$$
The tests which will be developed in this paper are based on an appropriate estimate of the quantity
\begin{equation}
  \label{hol4}
d_0 = \Big (\int_0^1 \big(\mu(x)-g(\mu)\big)^2 \tau (\diff x ) \Big )^{1/2}
\end{equation}
for which we require precise estimates of the mean  function $\mu$ and  the threshold $g(\mu)$. Note that the measure $\tau$ in \eqref{hol4} is chosen by the statistician and therefore known.

Throughout this paper, we assume that   $\tau$ is absolutely continuous with respect to the Lebesgue measure and   has  a piecewise continuous density, say $f_\tau$.
Further,  we assume that the mean function $\mu$ is sufficiently smooth, as specified in the following assumption.

\begin{assumption}\label{assumption:C2}
	The function $\mu:[0,1]\to\R$ is twice differentiable with Lipschitz continuous second derivative. {In particular,  this implies that the integrals $\int_0^1 \mu^2(x)\diff x$ and $\int_0^1 \mu^2(x) \tau (\diff x )$ are finite, thus, $\mu\in \Lc^2([0,1])$ and $\mu\in \Lc^2([0,1],\tau)$.}
\end{assumption}

A natural idea for the construction of a test of the hypotheses \eqref{h0:rel} is
 to estimate the $L^2$-distance $d_0$ as defined in \eqref{hol4} and to reject the null hypothesis for large values of the corresponding estimate.
 For this purpose one can use   the \textit{local linear estimator}, which is defined as the first coordinate of
the vector
\begin{align} \label{hol8}
\big(\hat{\mu}_{h_n}(t),\widehat{\mu'}_{h_n}(t)\big) = \underset{b_0,b_1}{\argmin} \sum_{i=1}^{n} \big(X_{i,n}-b_0-b_1(i/n-t)\big)^2 K_{h_n}(i/n-t),
\end{align}
to estimate  the mean function $\mu$ locally
 \cite[see, for example][]{FanGij1996}. In order to reduce the bias we
consider the \textit{Jackknife estimator}
\begin{align} \label{hol8a}
\check{\mu}_{h_n}(t)=2 \hat{\mu}_{h_n/\sqrt{2}}(t)-\hat{\mu}_{h_n}(t)
\end{align}
 as proposed by \cite{schucany1977}
   and obtain an estimate $\check g_{n} = g ( \hat{\mu}_{h_n})$ of  the threshold $g(\mu )$ (other estimates could be used as well).
Here $h_n$ is a  positive bandwidth  satisfying $h_n=o(1)$ as $n\to\infty$,   $K_h(\cdot)=K(\frac{\cdot}{h})$ and
$K$ denotes a kernel function satisfying the following assumption.

\begin{assumption}\label{assump:kern}
	The kernel $K$ is non-negative, symmetric, supported on the interval $[-1, 1]$. It is twice differentiable,  satisfies  $\int_{[-1,1]}K(x) dx = 1$  and Lipschitz continuous in an open interval containing the interval $[-1,1]$.
\end{assumption}

\noindent
The estimate of $d_{0}$ can then be defined as
\begin{equation} \label{hol5}
\|\check{\mu}_{h_n}-\check{g}_n\|_{2,\tau} =  \Big (\int_0^1 \big(  \check{\mu}_{h_n} (x)- \check g_{n} \big)^2 \tau (\diff x )\Big )^{1/2} ~.
\end{equation}
To study the  asymptotic properties of the  statistic defined in \eqref{hol5} and alternative estimates proposed in this paper (see  Section \ref{sec3}
for more details)
we require several assumptions regarding the dependency structure of the error process in model
\eqref{eq:model}, which will be discussed next.

\subsection{Locally stationary processes} \label{sec22}

For the proofs of our main results we require several assumption on the dependence structure  of the non-stationary time series defined in \eqref{eq:model}.
In
the following, we work with the notion of local stationarity as introduced by \cite{Zhou2009}. To be precise, let $\eta=(\eta_i)_{i\in\Z}$ be a sequence of independent identically distributed random variables and let $(\eta')=(\eta_i')_{i\in\Z}$ be an independent copy of $\eta$. Further, define $\Fc_i=\{ \eta_k:k\leq i \}$ and $\Fc_i^*=(\ldots,\eta_{-2},\eta_{-1},\eta_0',\eta_1,\ldots,\eta_i)$. Let $G:[0,1]\times \R^\infty \to \R$ denote a filter, such that $G(t,\Fc_i)$ is a properly defined random variable for all $t\in[0,1]$.

\smallskip

%\begin{definition}~
%	\begin{enumerate} \item 

A triangular array $\{(\eps_{i,n})_{1\leq i\leq n}\}_{n\in\N}$ is called \textit{locally stationary}, if there exists a filter $G$, which is continuous in its first argument, such that 
$$
\eps_{i,n}=G(i/n,\Fc_i)
$$  for all $i\in\{1,\ldots,n\}, n\in\N$.
		%\item 	
		The {\it  physical dependence measure  of a filter $G$}  with $\sup_{t\in[0,1]}\|G(t,\Fc_i)\|_{q,\Omega}<\infty$ with respect to $\|\cdot\|_{q,\Omega}$ is defined by
		\[ \delta_q(G,i)=\sup_{t\in[0,1]}\|G(t,\Fc_i)-G(t,\Fc_i^*)\|_{q,\Omega}. \]
	%	\item 
		A filter $G$ is called {\it  Lipschitz continuous with respect to $\|\cdot\|_{q,\Omega}$}, if
		\[ \sup_{0\leq s<t\leq 1}\|G(t,\Fc_i)-G(s,\Fc_i)\|_{q,\Omega}/|t-s|<\infty. \]
%	\end{enumerate}
%\end{definition}

The filter $G$ models the non-stationarity of $(\eps_{i,n})$. The quantity $\delta_q(G,i)$ measures the dependence of $(\eps_{i,n})$ and plays a similar role as mixing coefficients. We now state some assumptions regarding the error terms in model \eqref{eq:model}.

\begin{assumption} \label{assumption:LS}
	Let the triangular array $\{(\eps_{i,n})_{1\leq i\leq n}\}_{n\in\N}$ in \eqref{eq:model} be centered and locally stationary
	 with filter $G$, such that the following conditions are satisfied:
	\begin{enumerate}
	\item There exists a constant $\gamma\in(0,1)$ such that $\delta_4(G,i)=\Oc(\gamma^i)$, as $i\to\infty$.
	\item The filter $G$ is Lipschitz continuous with respect to $\|\cdot\|_{4,\Omega}$ and $$\sup_{t\in[0,1]}\|G(t,\Fc_0)\|_{4,\Omega}<\infty.$$
	\item The  (local) \textit{long-run variance} of $G$, defined as
	\begin{equation} \label{hol6}
	\sigma^2(t)=\sum_{i=-\infty}^{\infty}\cov\big(G(t,\Fc_i),G(t,\Fc_0)\big) ,
	\end{equation}
	for $t\in[0,1]$, is Lipschitz continuous and bounded away from zero, i.\,e., $$\inf_{t\in[0,1]}\sigma^2(t)>0.$$
	\item The   moments of order $8$ are uniformly bounded, i.\,e., $\max_{1\leq i\leq n} \ex \eps_{i,n}^8 <\infty$.
	%\item The variance $\tilde{\sigma}^2(t)=\ex[G(t,\Fc_0)^2]$ is bounded away from zero, i.\,e., $\inf_{t\in[0,1]}\tilde{\sigma}^2(t)>0$.
	\end{enumerate}
\end{assumption}

\subsection{Testing for  relevant differences - the problem of estimating the variance} \label{sec23}

 Continuing the discussion in Section \ref{sec21}  it follows from the results  given in Section \ref{sec3} that
    the estimator \eqref{hol5}  is asymptotically normal distributed if
   Assumptions \ref{assumption:C2}, \ref{assump:kern}, \ref{assumption:LS}
 and an additional assumption  on the consistency of the statistic $\check g_{n}$ are satisfied. More precisely, it can be shown
 (see Remark \ref{hol10}) that
\begin{equation}\label{hol9}
\sqrt{n}\big(\|\check{\mu}_{h_n}-\check{g}_n\|_{2,\tau}^2 - \|\mu-g(\mu)\|_{2,\tau}^2 \big)\convw \Nc\big(0,4\|d_\omega\sigma\|_2^2\big),
\end{equation}
 where 
 the  symbol  $\convw$ denotes weak convergence,
 $\sigma^2 (\cdot) $ is the local long-run variance defined  in \eqref{hol6} and $d_\omega(\cdot)$ denotes  an unknown function, that depends on the function $\mu$ and the error process.
In principle, if $\hat{\sigma}_n^2$ and $\hat{d}_\omega^2$ are  estimators of the local long-run variance and the  function $d_\omega$, respectively, a reasonable strategy would be
  to reject the null hypothesis in \eqref{h0:rel}
  if
\begin{equation}\label{eq:testLRV}
\|\check{\mu}_{h_n}-\check{g}_n\|_{2,\tau}^2 > \Delta^2 + z_{1-\alpha}\frac{2 \|\hat{d}_\omega\hat{\sigma}_n\|_{2}}{\sqrt{n}},
\end{equation}
where $z_{1-\alpha}$ denotes the $(1-\alpha)$-quantile of the standard normal distribution. It will be shown in Remark \ref{hol10} below, that this decision rule provides a consistent and asymptotic level $\alpha$-test for the hypotheses in \eqref{h0:rel}. 
However, it turns out that  this decision rule does not provide a stable test because  local estimators of the long-run variance
have a  rather large variability.

In order to avoid the intricate estimation of the local long-run variance
we will re-define the local linear estimator in \eqref{hol8}  permuting the data and consider the partial sum process of the new estimators
in the following section.
This approach will enable us to  construct  an (asymptotically) pivotal test statistic for the hypotheses in \eqref{h0:rel}.

	%% ====================
	\section{A pivotal test statistic} \label{sec3}
	%% ====================

\subsection{Self-normalization} \label{sec31}

A common technique
to avoid estimating the long-run variance are ratio statistics or self-normalization  as first introduced by
\cite{horvath2008}  and  \cite{Shao2010b}, which are  based on a convenient rescaling of the test statistic.
However, these concepts are not easy to transfer to  non-stationary time series as they rely on the asymptotic
properties of  a  corresponding partial sum process.
To illustrate the problems of these concepts   in non-stationary time series consider  the simplest case of model \eqref{eq:model},
where the mean function is constant  and the error process is stationary.  In  this case the estimate  of the constant mean function
$\mu$ from the partial  sample
$X_{1,n},\ldots , X_{\lfloor \lambda n\rfloor  ,n}$ is its mean and  under the assumptions stated
in Section \ref{sec2} we have  the weak convergence
\begin{equation*}%\label{eq:assump_SN}
\{
B_n(\lambda) \}_{\lambda \in [0,1]} = \Big \{  n^{-1/2}\sum_{i=1}^{\lfloor \lambda n \rfloor} \big(X_{i,n} - \mu \big) \Big  \}_{\lambda \in [0,1]} \convw \{  \sigma W(\lambda) \}_{\lambda \in [0,1]},
\end{equation*}
where $\big \{W(\lambda ) \big  \}_{\lambda\in[0,1]}$ denotes  a standard Brownian motion and the long-run variance $\sigma^2$
is defined in \eqref{hol6} and does not depend on $t$ (because of the  stationarity assumption).
In this case, the factorisation of the limit into the long-run variance and a probabilistic term is used to construct a test statistic in the form of a ratio,
 such that $\sigma$ occurs in the nominator and denominator, and therefore cancels out.
 On the other hand, if  the  error process in  model \eqref{eq:model}  is non-stationary (but the mean function is  still constant)
  we have the weak convergence
\begin{equation*}%\label{eq:assump_loc_SN}
\{ B_n(\lambda) \}_{\lambda \in [0,1]}     \convw \Big \{ \int_0^\lambda \sigma(u)\diff W(u) \big  \}_{\lambda \in [0,1]} .
\end{equation*}
In this case, the limiting distribution does not factorise and it is no longer possible to use the common self-normalization approach. \cite{zhao2013} and \cite{rho2015} discuss locally stationary time series, but the proposed self-normalization procedures have to be combined with a wild bootstrap.

In this work, we present an alternative  self-normalization procedure for non-stationary time series which does not require  resampling
to obtain (asymptotically) pivotal statistics.
Our approach is based  on the idea that in a locally stationary setting, observations  from
the whole interval $[0,1]$  need to be taken into account. Therefore, let $b_n$ denote a sequence with $b_n\to\infty$ and $b_n/n\to 0$, as $n\to\infty$, and let $\ell_n=\lfloor n/b_n\rfloor$. We define a (fixed)  permutation of the set  $\{1,\dots,n\}$ by
%\[T: \left\{\begin{array}{ccl}\{1,\dots, n\}&\to &\{1,\dots,n\}\\ k & \mapsto & T_k=(k-1 \mod \ell_n)b_n+\lceil k/\ell_n\rceil, \end{array}\right.\]
\[T: \left\{\begin{array}{ccl}\{1,\dots, n\}&\to &\{1,\dots,n\}\\ k & \mapsto & T_k=
\left\{\begin{array}{rl}(k-1 \mod \ell_n)b_n+\lceil k/\ell_n\rceil,&\text{if}~k\leq \ell_nb_n\\ k, & \text{if}~ k> \ell_nb_n \end{array}\right.  \end{array}\right.\]
Note that for $k=i\ell_n+j$ it holds $T_k=(j-1)b_n+i+1$, where $i\in \{0,\dots, b_n\}$ and $j\in\{1,\dots,\ell_n\}$.

Roughly speaking, the mapping $T$ splits the set $\{1,\dots,n\}$ into $\ell_n$ blocks with block length $b_n$, that is 
 \begin{eqnarray*}
   \{ T_1, \ldots, T_{\ell_n} \} &=& \{ 1, b_n+1, 2b_n+1, \ldots, (l_n-1)b_n+1 \} \\
   \{ T_{\ell_n+1}, \ldots, T_{2\ell_n} \} &=& \{ 2, b_n+2, 2b_n+2, \ldots, (l_n-1)b_n+2 \} \\
   \{ T_{2\ell_n+1}, \ldots, T_{3\ell_n} \} &=& \{ 3, b_n+3, 2b_n+2, \ldots, (l_n-1)b_n+3 \} \\
    \quad      \vdots   \quad     \quad     \quad       &\vdots  &  \quad  \quad    \quad    \quad      \quad    \quad     \quad \quad     \quad \vdots  
 \end{eqnarray*}
 where the blocks correspond to the columns in the above display.

With this notation, for $\zeta>0$ and $\lambda\in [\zeta,1]$, we define the sequential local linear estimator
of the mean function $\mu$ from the sample
$X_{T_1,n}, \ldots  , X_{T_{\lfloor \lambda n\rfloor},n}$ as the first coordinate of the vector
\begin{equation}\label{eq:defLocLinEst}
\big(\hat{\mu}_{h_n}(\lambda,t),\widehat{\mu'}_{h_n}(\lambda,t)\big) = \underset{b_0,b_1}{\argmin} \sum_{i=1}^{\lfloor \lambda n\rfloor} \big(X_{T_i,n}-b_0-b_1(T_i/n-t)\big)^2 K_{h_n}(T_i/n-t).
\end{equation}
In the following we will work with a bias corrected version of  $\hat{\mu}_{h_n}(\lambda,t)$ and consider
the  sequential Jackknife estimator
\begin{equation}\label{eq:defJackknife}
\tilde{\mu}_{h_n}(\lambda,t)=2 \hat{\mu}_{h_n/\sqrt{2}}(\lambda,t)-\hat{\mu}_{h_n}(\lambda,t).
\end{equation}
With the notation 
$$
d(x):=\mu(x)-g(\mu)
$$
we can rewrite  the distance in \eqref{hol4} as $d_0 = \|d\|_{2,\tau}$. In order to estimate $d_0$
 let $\hat{g}_n(\lambda)$ be a suitable sequential estimator of the benchmark $g(\mu)$ from the sample
$X_{T_1,n}, \ldots  , X_{T_{\lfloor \lambda n\rfloor},n}$ and define
$$
\hat{d}_n(\lambda,x)=\tilde{\mu}_{h_n}(\lambda,x)-\hat{g}_n(\lambda)
$$
 and
\begin{equation*}% \label{hol11}
\hat{d}_{2,n}(\lambda)= \|\hat{d}_n(\lambda,\cdot)\|_{2,\tau}.
\end{equation*}
Note that all estimates are calculated from a part of the permuted sample and that the statistic
$\hat{d}_{2,n}(1)$  estimates $d_{0}$ from the full sample $X_{1,n}, \ldots  , X_{n,n}$ and therefore coincides
with the estimator defined in \eqref{hol5}.
For the proofs of our main results we need an assumption regarding the precision of the  estimator
$\hat g_{n} (\cdot )$ of the benchmark, the bandwidth $h_{n}$  and the block length  $b_{n}$, which are given next.

\begin{assumption}\label{assump:gn}
	The sequential estimator $\hat{g}_n(\lambda)$ of the benchmark $g(\mu)$ admits a stochastic expansion
		$$ \lambda \sqrt{n}\big(\hat{g}_n(\lambda)-g(\mu)\big)=\frac{1}{\sqrt{n}}\sum_{i=1}^{\lfloor \lambda n\rfloor} \eps_{T_i,n} \omega_n(T_i/n)+o_\pr(1), $$
		uniformly with respect to $\lambda \in [\zeta, 1]$
		for some constant $\zeta \in (0,1)$ and functions $\omega_n,\omega\in \Lc^4([0,1])$ such that $\omega_n$ is Riemann-integrable for any $n\in\N$, $\|\omega_n-\omega\|_4\to 0$  and
		\begin{equation}\label{eq:omega}
		 \sum_{j=1}^{\ell_n}  \sum_{r = 1}^{b_n} \big|\omega_n\big(\tfrac{jb_n}{n}\big)-\omega_n\big(\tfrac{r+jb_n}{n}\big)  \big|=\Oc(b_nh_n^{-1}),
		\end{equation}
		where $\|\omega_n-\omega\|_4 = \big ( \int_0^1 \|\omega_n (x) -\omega (x) \|^4  d x \big )^{1/4}$.
\end{assumption}

\begin{assumption}\label{assump:seq}
    There exist  constants $\alpha, \beta>0$  such that the sequence of bandwidths
  $h_n\to 0$ satisfies   $nh_n\to\infty$, $nh_n^6\to0$, $n^\beta = \Oc(nh_n^4)$ and  the sequence $b_n\to\infty$    satisfies
 $b_n^3/n\to 0$, $\tfrac{b_n^2}{nh_n}\to 0$,  $n^\alpha=\Oc(b_n)$.
    \end{assumption}

\begin{remark}\label{rem:gn} ~

(1) 
Assumption \ref{assump:gn} is rather mild and satisfied for many functionals as explained below. Proofs of the following statements can be found in
Section \ref{sec:rem} of the Appendix.

	\begin{enumerate}
		\item[(i)] Condition
		\eqref{eq:omega} is satisfied for  all Lipschitz continuous functions and all  step functions
		on the interval $[0,1]$.
		\item[(ii)] The assumption holds for $g(\mu)=c$ with some known $c\in\R$ %and the ``projection''
				 %$g(\mu)=\mu(t)$  with  some fixed $t\in [0,1]$
				 , for $\hat{g}_n(\lambda) = g(\tilde{\mu}_{h_n}(\lambda,\cdot))$.
		\item[(iii)] Assumption \ref{assump:gn} is satisfied for the functional defined in \eqref{hol3}
and the estimator
		$$
		\hat{g}_n(\lambda) =  \frac{1}{(t_1-t_0) \lambda n}\sum_{i=1}^{\lfloor \lambda n\rfloor} X_{T_i,n} \id(t_0\leq T_i/n\leq t_1) .
		$$
	\item[(iv)]
	Let $g:L^2([0,1])\to\R$ be a linear, bounded operator. By the Riesz-Fréchet representation theorem, there exists $\bar{h}_g\in L^2([0,1])$ such that $g(\cdot)=\langle \cdot, \bar{h}_g\rangle$. If there exists a continuous function $h_g$ in the equivalence class  corresponding to $\bar{h}_g$, the estimator $\hat{g}_n(\lambda) = g(\tilde{\mu}_{h_n}(\lambda,\cdot))$ satisfies Assumption \ref{assump:gn}.
	\item[(v)] The functional $g (\mu ) = \mu (t) $ (for some fixed $t\in [0,1]$) is not covered by Assumption \ref{assump:gn}.  Nevertheless a corresponding pivotal test can be developed as well - see Remark \ref{rem1} for more details.
\end{enumerate}
(2)
        Assumption \ref{assump:seq} is satisfied,   if $h_n=n^{-1/5}, b_n=n^{1/4}$. In this case, the constants $\alpha$ and $\beta$ can be chosen as $\frac{1}{4}$ and $\frac{1}{5}$, respectively.
        %Further, with $m_n=n^{1/21}$ and $c=16$, it holds $\tfrac{m_n^4}{nh_n^4}\to 0, \tfrac{m_n^2}{b_n}\to 0$ and $\tfrac{b_nn^{1/2}}{m_n^c}\to 0$.
    \end{remark}

\medskip

\begin{theorem}\label{thm:convSN}
	Let Assumptions \ref{assumption:C2}, \ref{assump:kern}, \ref{assumption:LS}, \ref{assump:gn} and \ref{assump:seq} be satisfied.
	For any $\zeta \in (0,1)$,
	the process
	\begin{equation}\label{eq:Gn}
	\{ G_n(\lambda) \}_{\lambda \in [\zeta,1] }
	 =\{ \lambda \sqrt{n}( \hat{d}_{2,n}^2(\lambda)-d_0^2 ) \}_{\lambda \in [\zeta,1] }
	\end{equation}
	converges weakly to  the process
		\begin{equation}\label{hol51}
		 \{ G(\lambda)\}_{\lambda \in [\zeta,1] } =\{ 2 \big\|d_\omega\sigma\big\|_{2} W(\lambda)
	\}_{\lambda \in [\zeta,1] }
	\end{equation}
	 in $\ell^\infty([\zeta,1])$, where $d_\omega(\cdot) = f_\tau(\cdot)d(\cdot) + \omega(\cdot) \int_0^1 d(x)\tau(\diff x)$ and
	$\{ W(\lambda)\}_{\lambda\in[0,1]}$ denotes a standard Brownian motion. In particular, $G(\lambda)=0$ if $d_0=0$.
\end{theorem}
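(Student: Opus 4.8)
The plan is to first linearise the statistic $\hat d_{2,n}^2(\lambda)$, thereby reducing the assertion to a functional central limit theorem for a weighted partial-sum process of the permuted errors $\eps_{T_i,n}$, and then to exploit the block structure of the permutation $T$ to identify both the limiting variance $4\|d_\omega\sigma\|_2^2$ and the independence of the increments of the limit. Writing $e_n(\lambda,x)=\tilde\mu_{h_n}(\lambda,x)-\mu(x)$ and $\delta_n(\lambda)=\hat g_n(\lambda)-g(\mu)$, so that $\hat d_n(\lambda,x)=d(x)+e_n(\lambda,x)-\delta_n(\lambda)$, expanding the square under the integral gives
\begin{equation*}
\hat d_{2,n}^2(\lambda)-d_0^2 = 2\int_0^1 d(x)\big(e_n(\lambda,x)-\delta_n(\lambda)\big)\tau(\diff x) + \int_0^1\big(e_n(\lambda,x)-\delta_n(\lambda)\big)^2\tau(\diff x).
\end{equation*}
First I would show that, after multiplication by $\lambda\sqrt n$, the quadratic remainder is $o_\pr(1)$ uniformly in $\lambda\in[\zeta,1]$; this rests on a uniform rate $\sup_x|e_n(\lambda,x)|=O_\pr((nh_n)^{-1/2})$ up to logarithmic factors, on the bias being $\Oc(h_n^3)$, and on $\delta_n(\lambda)=O_\pr(n^{-1/2})$, so that $\lambda\sqrt n\int e_n^2\tau(\diff x)$ is of order $1/(\sqrt n h_n)+\sqrt n h_n^6\to0$ by Assumption~\ref{assump:seq}. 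The uniform control over $\lambda$ requires a maximal inequality for the sequential estimator, and it is here that the eighth-moment bound in Assumption~\ref{assumption:LS} is used.

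The remaining linear term is the heart of the reduction. For the sequential Jackknife local linear estimator I would establish the stochastic expansion
\begin{equation*}
e_n(\lambda,x)=\frac{1}{\lambda n}\sum_{i=1}^{\lfloor\lambda n\rfloor} K^{\mathrm{eq}}_{h_n}(T_i/n-x)\,\eps_{T_i,n}+\Oc(h_n^3),
\end{equation*}
uniformly in $\lambda$ and $x$, where $K^{\mathrm{eq}}$ denotes the equivalent kernel; the crucial point is that the $\Oc(h_n^2)$ bias of the local linear fit is cancelled by the Jackknife combination \eqref{eq:defJackknife} up to $\Oc(h_n^3)$ under the Lipschitz condition on $\mu''$ in Assumption~\ref{assumption:C2}, so that $\lambda\sqrt n\,\Oc(h_n^3)\to0$ precisely because $nh_n^6\to0$. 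Integrating against $d(\cdot)f_\tau(\cdot)$ and using that the thinned design points $T_i/n$ inside a window of width $h_n$ around $x$ still have asymptotic density $\lambda$, the kernel integrates out and yields $\int_0^1 d(x)e_n(\lambda,x)\tau(\diff x)=\tfrac1{\lambda n}\sum_{i\le\lfloor\lambda n\rfloor} d(T_i/n)f_\tau(T_i/n)\eps_{T_i,n}+o_\pr(n^{-1/2})$. Combining this with the expansion of $\hat g_n$ furnished by Assumption~\ref{assump:gn}, and recalling $\int_0^1 d\,\tau(\diff x)=\langle d,1\rangle_\tau$, the two linear contributions merge into the single weight $d_\omega$ of the statement, so that
\begin{equation*}
G_n(\lambda)=\lambda\sqrt n\,(\hat d_{2,n}^2(\lambda)-d_0^2) = 2 S_n(\lambda)+o_\pr(1),\qquad S_n(\lambda):=\frac{1}{\sqrt n}\sum_{i=1}^{\lfloor\lambda n\rfloor}\eps_{T_i,n}\,d_\omega(T_i/n),
\end{equation*}
uniformly in $\lambda\in[\zeta,1]$, the passage from $\omega_n$ to $\omega$ being absorbed into $o_\pr$ via $\|\omega_n-\omega\|_4\to0$ and condition \eqref{eq:omega}.

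It then remains to prove $2S_n\convw 2\|d_\omega\sigma\|_2 W$ in $\ell^\infty([\zeta,1])$, and this is where the construction of $T$ is decisive. An original index $s=(j-1)b_n+r$ lies among the first $\lfloor\lambda n\rfloor$ permuted positions essentially iff its within-block position $r$ satisfies $r\lesssim\lambda b_n$, so that a fraction $\lambda$ of each block is retained. For the variance I would write $\ex[S_n(\lambda)^2]=n^{-1}\sum_{i,j\le\lfloor\lambda n\rfloor}d_\omega(T_i/n)d_\omega(T_j/n)\cov(\eps_{T_i,n},\eps_{T_j,n})$ and observe that, since $\delta_4(G,i)=\Oc(\gamma^i)$ and $b_n\to\infty$, this covariance is non-negligible only for pairs whose original indices lie in the same block at short lag; summing the local autocovariances over all lags reproduces $\sigma^2$, the Riemann sum over the $\ell_n$ block positions reproduces $\int_0^1 d_\omega^2(u)\sigma^2(u)\,\diff u$, and the retained fraction yields $\ex[S_n(\lambda)^2]\to\lambda\|d_\omega\sigma\|_2^2$. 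The same bookkeeping shows that for $\lambda_1<\lambda_2$ the increment $S_n(\lambda_2)-S_n(\lambda_1)$ correlates with $S_n(\lambda_1)$ only through the $\ell_n$ block boundaries near within-block position $\lambda_1 b_n$, contributing $\Oc(\ell_n/n)=\Oc(b_n^{-1})\to0$, whence the limit has independent increments. Finite-dimensional convergence to the Gaussian law with covariance $\|d_\omega\sigma\|_2^2\min(\lambda_i,\lambda_j)$ then follows from a central limit theorem for weakly dependent arrays, obtained by approximating $G(t,\Fc_i)$ by its $m$-dependent truncation (with error controlled by $\delta_4(G,i)=\Oc(\gamma^i)$) and a blocking/Lindeberg argument, while tightness follows from the increment bound $\ex|S_n(\lambda_2)-S_n(\lambda_1)|^4\le C(\lambda_2-\lambda_1)^2$ for $\lambda_2-\lambda_1\ge 1/n$.

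I expect the main obstacle to be twofold: the uniform-in-$\lambda$ stochastic expansion of the sequential Jackknife estimator, and, above all, the covariance bookkeeping across blocks that converts the permuted partial sums into a genuine Brownian limit carrying the long-run variance $\sigma^2$. The final assertion is immediate: if $d_0=\|d\|_{2,\tau}=0$ then $d=0$ $\tau$-almost everywhere, hence $d_\omega=f_\tau d+\omega\langle d,1\rangle_\tau=0$ and $G\equiv0$.
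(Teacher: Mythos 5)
Your proposal is correct and follows essentially the same route as the paper's proof: linearisation of $\hat{d}_{2,n}^2(\lambda)$, a uniform stochastic expansion of the sequential Jackknife estimator (equivalent kernel plus $\Oc(h_n^3)$ bias) reducing $G_n$ to the weighted permuted partial-sum process $2n^{-1/2}\sum_{i\le\lfloor\lambda n\rfloor}\eps_{T_i,n}d_{\omega}(T_i/n)$, followed by an $m_n$-dependent approximation with a big-block/small-block argument that exploits exactly the paper's observation that the first $\lfloor\lambda n\rfloor$ permuted indices retain the initial $\lambda$-fraction of every block, and the same fourth-moment bound $\ex|S_n(\lambda_2)-S_n(\lambda_1)|^4\leq C(\lambda_2-\lambda_1)^2$ for tightness. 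The only cosmetic difference is that you dispose of the quadratic remainder via a uniform sup-norm rate for the kernel estimator, whereas the paper bounds its $L^2$-norm directly through an eighth-moment calculation on the $m_n$-dependent approximation.
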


\medskip

\begin{remark} \label{rem1}
	If $g(\mu)=\mu(t)$, for some fixed $t\in[0,1]$, the benchmark $g(\mu)$ needs to be estimated locally and there is no estimator 
satisfying  Assumption \ref{assump:gn}. 
 However, an analogous result as stated in Theorem \ref{thm:convSN}  can be 
  shown with the same arguments given in the proof of  the latter theorem.
  More precisely, if $g(\mu)=\mu(t)$ we can use  $\hat{g}_n(\lambda) = \tilde{\mu}_{h_n}(\lambda,t)$ and under Assumptions \ref{assumption:C2}, \ref{assump:kern}, \ref{assumption:LS} and \ref{assump:seq}, the process 
	$$\{ G_n'(\lambda) \}_{\lambda \in [\zeta,1] }
	=\{ \lambda \sqrt{nh_n}( \hat{d}_{2,n}^2(\lambda)-d_0^2 ) \}_{\lambda \in [\zeta,1] }$$
	converges weakly to
	$$ \{ G'(\lambda)\}_{\lambda \in [\zeta,1] } =\bigg\{ 2 \sigma(t)  \kappa(t) \int_0^1 d(x)\tau(\diff x)  W(\lambda)\bigg\}_{\lambda \in [\zeta,1] }$$  in $\ell^\infty([\zeta,1])$, where
	the constant $\kappa$ is defined  by   $\kappa^2(t)=\int_{-1}^1 \big(K^*(x)\big)^2 \diff x$ if  $t\in (0,1)$ and by 
	$$ \kappa^2(t) = \frac{1}{(\kappa_{t,0}\kappa_{t,2}-\kappa_{t,1}^2)^2} \int_{-t}^{1-t} \bigg\{\bigg( \frac{\kappa_{t,2}}{\sqrt{2}}-\kappa_{t,1} x \bigg) K^*(x) + \bigg(\frac{1}{\sqrt{2}}-1\bigg)\kappa_{t,2} K(x)\bigg\}^2 \diff x, $$
	with $\kappa_{t,j} = \int_{-t}^{1-t}x^j K(x)\diff x$, for $j\in\{0,1,2\}$ and $t\in \{0,1\}$, and $K^*$ is defined by  $ K^*(x)=2\sqrt{2}K(\sqrt{2}x)-K(x)$. 
	% bIn particular, the self-normalization proposed in \eqref{eq:test} defines a consistent level $\alpha$-test for the hypotheses \eqref{h0:rel}.
\end{remark}

\medskip

In the following, we will develop  a pivotal test for the hypotheses \eqref{h0:rel} on the basis of Theorem, \ref{thm:convSN} or Remark  \ref{rem1}. For this purpose
let $\nu$ be a probability measure on the interval $[\zeta, 1]$ with $\nu(\{1\})=0$. We propose to reject the null hypothesis if
\begin{equation}\label{eq:test}
\hat{d}_{2,n}^2(1) > \Delta^2 + q_{1-\alpha}\int_\zeta^1 \lambda |\hat{d}_{2,n}^2(\lambda) -\hat{d}_{2,n}^2(1)|\diff \nu(\lambda), \end{equation}
where $q_{1-\alpha}$ denotes the $(1-\alpha)$-quantile of the distribution of the random variable
$$
\frac{W(1)}{\int_\zeta^1 |W(\lambda)-\lambda W(1) |\diff \nu(\lambda)}.
$$

\medskip

\begin{corollary} \label{cortest}
Let the assumptions of either Theorem \ref{thm:convSN} or of Remark \ref{rem1} be satisfied.
	If  $\Delta>0$, the decision rule \eqref{eq:test} defines a consistent and asymptotic level $\alpha$-test for the hypotheses \eqref{h0:rel} of a relevant deviation of the mean function $\mu$ from the threshold $g(\mu)$, that is
	$$ \pr\big(~
\mbox{\rm the null hypothesis \eqref{h0:rel} is rejected by  \eqref{eq:test} }
	\big) \xrightarrow{n\to\infty} \left\{\begin{array}{rl}0,&\text{if}~d_0<\Delta\\ \alpha,&\text{if}~d_0=\Delta\\ 1,&\text{if}~d_0>\Delta. \end{array}\right.
$$
\end{corollary}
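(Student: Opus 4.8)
The plan is to reduce the decision rule to a statement purely about the process $G_n$ of Theorem \ref{thm:convSN} (or $G_n'$ of Remark \ref{rem1}), to exploit the exact cancellation of the unknown scale constant under self-normalization at the boundary of the hypotheses, and to dispatch the interior and the alternative by a diverging deterministic drift. First I would record the two elementary identities that follow directly from the definition \eqref{eq:Gn}. Writing $\hat{d}_{2,n}^2(1)=d_0^2+G_n(1)/\sqrt n$ and $\lambda\hat{d}_{2,n}^2(\lambda)=\lambda d_0^2+G_n(\lambda)/\sqrt n$, one obtains
\[
\lambda\big(\hat{d}_{2,n}^2(\lambda)-\hat{d}_{2,n}^2(1)\big)=\tfrac{1}{\sqrt n}\big(G_n(\lambda)-\lambda G_n(1)\big),
\qquad
\sqrt n\big(\hat{d}_{2,n}^2(1)-\Delta^2\big)=G_n(1)+\sqrt n\,(d_0^2-\Delta^2).
\]
Multiplying the decision rule \eqref{eq:test} by $\sqrt n$ and substituting these identities, the null hypothesis is rejected precisely when
\[
G_n(1)+\sqrt n\,(d_0^2-\Delta^2) > q_{1-\alpha}\int_\zeta^1\big|G_n(\lambda)-\lambda G_n(1)\big|\,\diff\nu(\lambda).
\]

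I would then treat the boundary case $d_0=\Delta$ first, where the deterministic drift $\sqrt n\,(d_0^2-\Delta^2)$ vanishes and the rejection event is $\{\Psi(G_n)>0\}$ for the functional $\Psi(f)=f(1)-q_{1-\alpha}\int_\zeta^1|f(\lambda)-\lambda f(1)|\,\diff\nu(\lambda)$ on $\ell^\infty([\zeta,1])$. This $\Psi$ is continuous in the supremum norm (the integrand is bounded by $(1+\lambda)\|f\|_\infty$, so the integral functional is continuous) and positively homogeneous, $\Psi(cf)=c\,\Psi(f)$ for $c\geq 0$. Writing $c=2\|d_\omega\sigma\|_2$ for the scale in \eqref{hol51}, Theorem \ref{thm:convSN} and the continuous mapping theorem give $\Psi(G_n)\convw \Psi(cW)=c\,\Psi(W)$. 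Since $d_0=\Delta>0$ forces $c>0$ in the non-degenerate case $\|d_\omega\sigma\|_2>0$, the constant cancels; and as $\int_\zeta^1|W(\lambda)-\lambda W(1)|\,\diff\nu(\lambda)>0$ almost surely (the a.s.\ continuous Brownian bridge $W(\lambda)-\lambda W(1)$ is not $\nu$-a.e.\ zero on $[\zeta,1)$, and $\nu([\zeta,1))=1$), the event $\{\Psi(W)>0\}$ coincides with $\{W(1)/\int_\zeta^1|W(\lambda)-\lambda W(1)|\diff\nu > q_{1-\alpha}\}$. That ratio has a continuous distribution, so $\pr(\Psi(W)=0)=0$, and the portmanteau theorem yields that the rejection probability converges to $\pr\big(W(1)/\int_\zeta^1|W(\lambda)-\lambda W(1)|\diff\nu > q_{1-\alpha}\big)=\alpha$, as required.

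For the interior $d_0<\Delta$ and the alternative $d_0>\Delta$ the argument is softer. Since $G_n\convw cW$ in $\ell^\infty([\zeta,1])$, both $G_n(1)$ and $\int_\zeta^1|G_n(\lambda)-\lambda G_n(1)|\,\diff\nu(\lambda)$ are tight, hence so is the whole right-hand side. If $d_0<\Delta$, then $\sqrt n\,(d_0^2-\Delta^2)\to-\infty$ drives the left-hand side to $-\infty$ while the right-hand side is $O_\pr(1)$, so the rejection probability tends to $0$; if $d_0>\Delta$, then $\sqrt n\,(d_0^2-\Delta^2)\to+\infty$ drives the left-hand side to $+\infty$ against a tight right-hand side, so the rejection probability tends to $1$. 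The case covered by Remark \ref{rem1} is handled identically, replacing $\sqrt n$ by $\sqrt{nh_n}$ and $c$ by $2\sigma(t)\kappa(t)\int_0^1 d(x)\tau(\diff x)$, which again cancels at the boundary. The main obstacle I anticipate is precisely the boundary analysis: verifying that the self-normalizing denominator is almost surely bounded away from $0$ under the limit law and that the limiting ratio has no atom at $q_{1-\alpha}$, since these are exactly what licenses the continuous mapping and portmanteau steps and the exact identification of the limit as $\alpha$.
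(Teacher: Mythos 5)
Your proposal is correct and takes essentially the same route as the paper: the paper's (very terse) proof also rewrites the rejection rule \eqref{eq:test} as the self-normalized ratio exceeding $q_{1-\alpha}$ and invokes Theorem \ref{thm:convSN} (resp.\ Remark \ref{rem1}) via continuous mapping, with the scale $2\|d_\omega\sigma\|_2$ cancelling at the boundary and the drift $\sqrt{n}(d_0^2-\Delta^2)$ deciding the cases $d_0\neq\Delta$ --- precisely the argument you spell out in additive form, including the two facts (a.s.\ positivity of the denominator and atomlessness of the limiting ratio, both consequences of the bridge $W(\lambda)-\lambda W(1)$ being independent of $W(1)$) that the paper leaves implicit. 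The only cosmetic difference is that the paper treats the degenerate case $\|d\sigma\|_2=0$ (forcing $d\equiv 0$, so the rejection probability tends to $\pr(0>\Delta^2)=0$) as a separate case, whereas your tightness argument for $d_0<\Delta$ absorbs it automatically; like the paper, your boundary analysis assumes the non-degeneracy $\|d_\omega\sigma\|_2>0$.
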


\medskip

\begin{remark} \label{hol10}
Note that the Jackknife estimator defined in  \eqref{hol8a} coincides with $\tilde{\mu}_{h_n}(1,\cdot)$ and $\check  g_n = g( \tilde{\mu}_{h_n}(1,\cdot))$.
Consequently, the continuous mapping theorem and  Theorem \ref{thm:convSN}  yield the weak convergence stated in equation \eqref{hol9} of Section \ref{sec23}.
Consequently, if the estimators $\hat{\sigma}_n^2$ and $\hat{d}_\omega^2$ are consistent, the decision  rule in \eqref{eq:testLRV} defines a consistent and asymptotic level $\alpha$ test for the hypothesis \eqref{h0:rel}, that is
 	$$ \pr\big(~
\mbox{\rm the null hypothesis \eqref{h0:rel} is rejected by  \eqref{eq:testLRV} }
	\big)
	 \xrightarrow{n\to\infty} \left\{\begin{array}{rl}0,&\text{if}~d_0<\Delta\\ \alpha,&\text{if}~d_0=\Delta\\ 1,&\text{if}~d_0>\Delta. \end{array}\right.
$$
\end{remark}

	%% ====================
\section{Finite Sample Properties} \label{sec4}
%% ====================

\subsection{Monte Carlo simulation study}

A large scale Monte Carlo simulation study was performed to analyse the finite-sample properties of the proposed test \eqref{eq:test}.
 The local linear estimator in \eqref{eq:defLocLinEst} requires the specification of the kernel $K$ and the bandwidth $h_n$.
 We used the quartic kernel $K(x)=\tfrac{15}{16}(1-x^2)^2$, but other kernels will yield similar results.
The choice of the bandwidth $h_n$ for the estimator $\tilde{\mu}_{h_n}$ is crucial to avoid both overfitting and oversmoothing,
and we employ the following $k$-fold cross-validation procedure with $k=10$ (as recommended by \citealp{HasTibFri09}, page 242).
\begin{alg}[Cross-Validation for the Choice of $h_n$] \label{alg:hn}~
	\begin{compactenum}
		\item Split the observed data randomly in $k=10$ sets $S_1, \dots, S_{10}$ of equal length.	
		\item For $h_n=\tfrac{1}{n}$ and each set $S_i$, calculate the Jackknife estimator $\tilde{\mu}_{h_n}^{(i)}$ based on the data in the remaining sets.	
		\item Based on the Jackknife estimators $\tilde{\mu}_{h_n}^{(i)}$ from Step (2), compute the mean squared prediction error
		\[
		\MSE_{h_n}=\frac{1}{1-h_n} \sum_{i=1}^{10}\sum_{j\in S_i} \big\{ X_{j,n}-\tilde{\mu}_{h_n}^{(i)}(j/n)\big\}^2.
		\]		
		\item Repeat Steps (2) and (3) for the bandwidths $h_n=\tfrac{2}{n},\dots,\tfrac{\lfloor n/2 \rfloor}{n}$		
		\item Choose the bandwidth $h_n$ that minimises the mean squared prediction error $\MSE_{h_n}$.	
	\end{compactenum}
\end{alg}
 As block width we chose $b_n=20$ and as measure $\nu$ on $[0,1]$ in \eqref{eq:test} we used the uniform
 distribution on the  set $\{1/5, \ldots ,4/5\}$.
  Preliminary simulation studies showed that different choices of    $b_n$ and the measure $\nu$ lead to similar results.

We considered two types of mean functions $\mu$, three different error processes  and four different choices of the time-dependent variance.
The first class of models is based on the mean function
\begin{equation}
\label{m1}
\mu_a^{(1)}(x) = 10 + \tfrac{1}{2} \sin(8\pi x)+a\big(x-\tfrac{1}{4}\big)^2 \id\big(x>\tfrac{1}{4}\big),
\end{equation}
which is displayed in the left part of  Figure \ref{fig:models} for various choices of the parameter $a$. We considered the testing problem
\begin{equation}
\label{h0avlspec}
H_0:    d_0 := \Big \| \mu_{a}^{(1)} - g(\mu)  \Big \|_{2,\tau}  \leq 1/2   \quad
\text{  vs.\ }  \quad H_1:  \Big \| \mu_{a}^{(1)} - \bar{\mu}_a^{(1)} \Big \|_{2,\tau}  > 1/2,
\end{equation}
where 
$$
g(\mu) = 
\bar{\mu}_a^{(1)}=2\int_0^{1/2} \mu_{a}^{(1)}(x)\diff x, 
$$ 
and $\tau(\cdot)=2\lambda_{[1/2,1]}(\cdot) $ is  the Lebesgue measure on the interval $[\frac{1}{2},1]$. Such a scenario might for instance be encountered and of interest in the context of analyzing climate data where measurements for a recent period are compared with an average from previous years.

Note that $\big \| \mu_{a}^{(1)} - \bar{\mu}_a^{(1)} \big \|_{2,\tau }=1/2$ for $a^*\approx 1.43$.
We call this situation (i.e. when there is equality in \eqref{h0avlspec}) the {\it boundary of the hypotheses}.
On the other hand for $a<a^*$ and $a>a^*$
the null hypothesis and  alternative
 in \eqref{h0avlspec} are satisfied, respectively.

\begin{figure}[tbp]
	\centering \vspace{-.4cm}
	%	\subfloat[$\mu_a^{(1)}$: Different Choices of $a$]{\includegraphics[width=70mm]{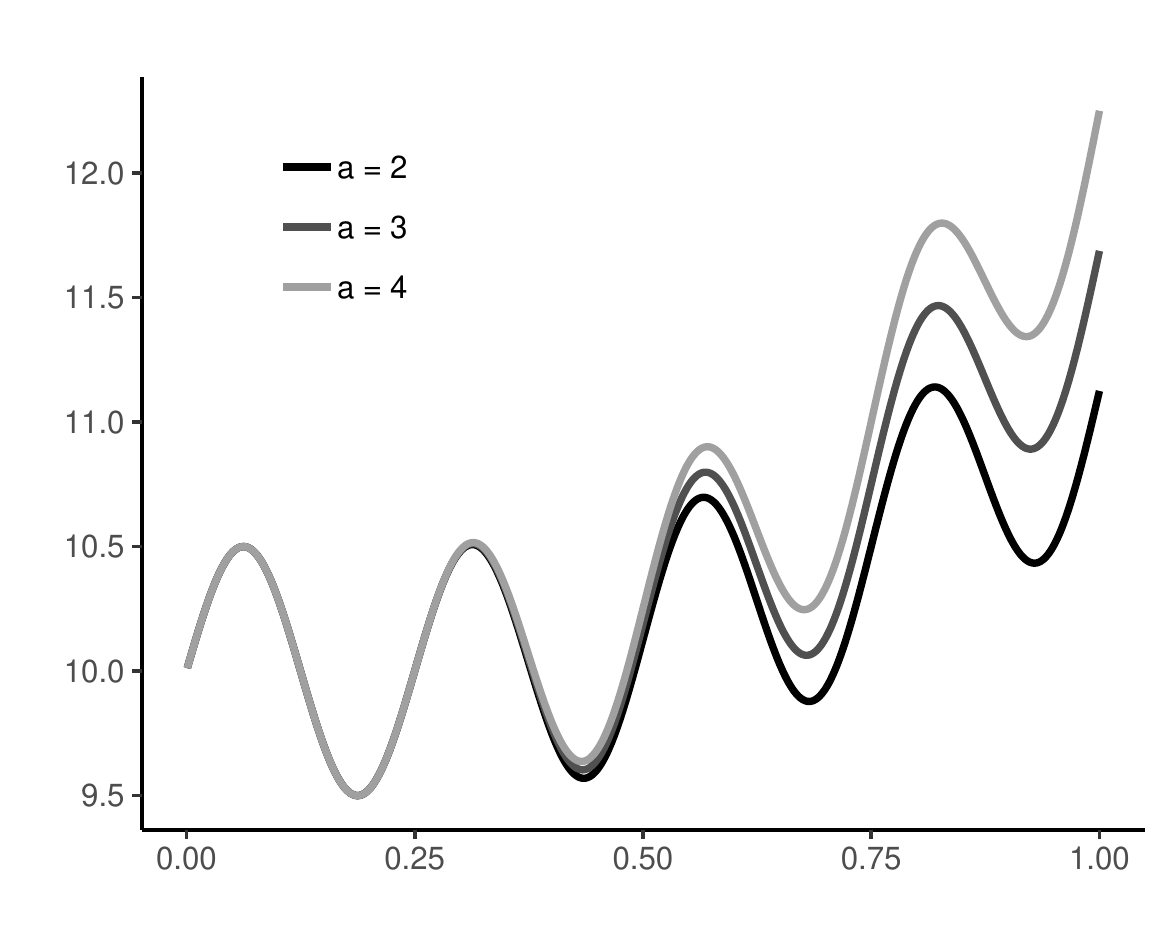}}
	%	\subfloat[$\mu^{(2)}$: Different Choices of $\Delta$]{\includegraphics[width=70mm]{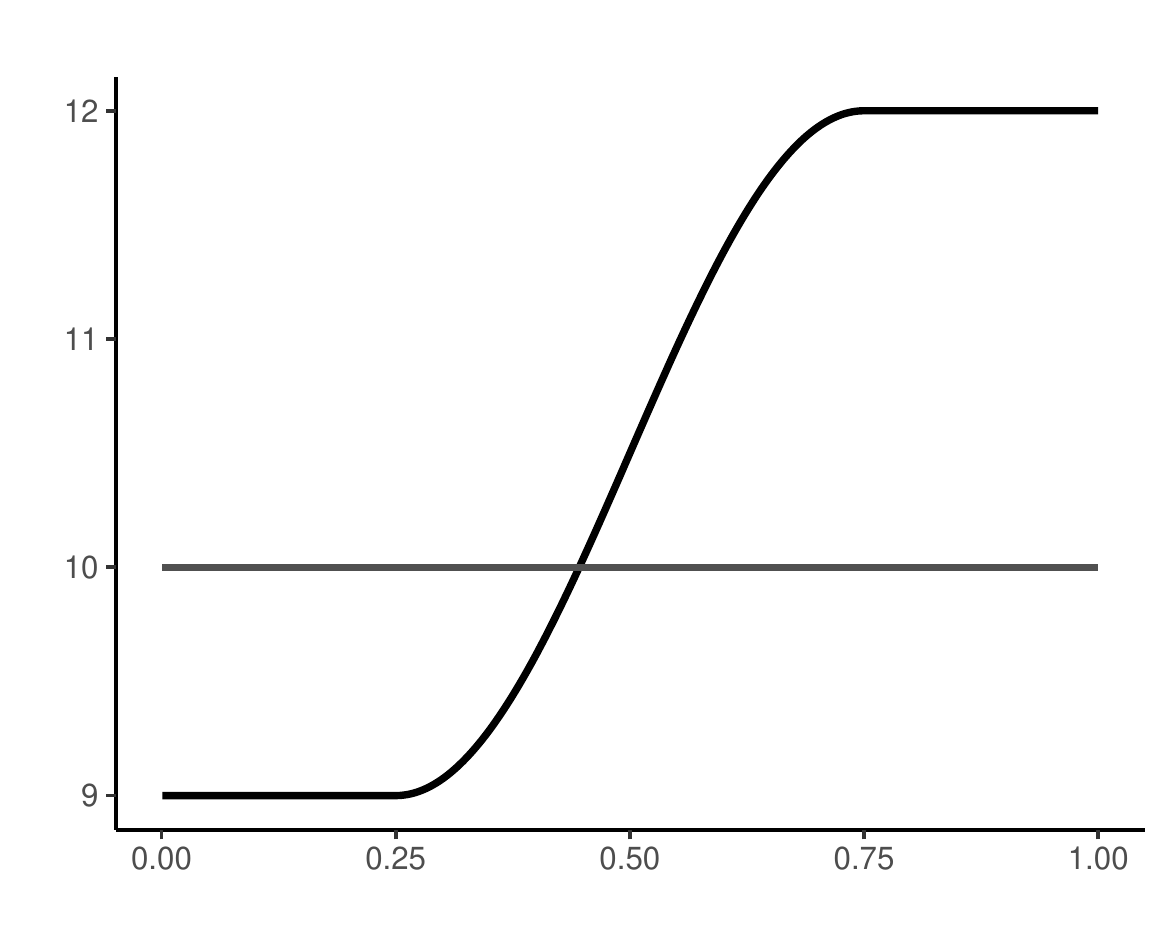}}
	\mbox{\hspace{-.4cm}\includegraphics[width=65mm]{mu1.pdf} \hspace{-.5cm}
		\includegraphics[width=65mm]{mu2.pdf}}\vspace{-.5cm}
	\caption{\textit{Left: The mean function $\mu_a^{\scs (1)}$  for three choices of $a$. Right: The mean function $\mu^{\scs (2)}$.}}
	\label{fig:models}
\end{figure}

The second  model has  the mean function
\begin{equation}
\label{m2}
 \mu^{(2)}(x) = \left\{\begin{array}{ll}
9&\text{for}~x\le \tfrac{1}{4}\\
- \tfrac{3}{2}\sin(2\pi x)+10.5&\text{for}~\tfrac{1}{4}<x\le \tfrac{3}{4}\\
12&\text{for}~\tfrac{3}{4}<x
\end{array} \right.
\end{equation}
which is displayed  in the right  part of   Figure \ref{fig:models}. For models involving this mean function, we considered the testing problem 
\begin{equation}
\label{h0allspec}
H_0:   d_0 =   \| \mu^{(2)} - g(\mu)\|_{2,\tau}  \leq \Delta
\quad
\text{   vs.\  }   \quad H_1:  \| \mu^{(2)} - g(\mu) \|_{2,\tau}   >   \Delta~
\end{equation}
for
 various choices of the threshold $\Delta > 0$,
where  $g(\mu)\equiv 10 $ and  $\tau(\cdot)=\lambda_{[0,1]}(\cdot)$ is the Lebesgue measure on
the interval  $[0,1]$.
Such a setting might be encountered in quality control, where deviations from a target value might occur gradually due to wear and tear (and eventual failure) of a component of a complex system. Note that  $\| \mu^{(2)} - 10 \|_{2,\tau} \le\Delta$ for $\Delta \ge 1.392$, whereas $\| \mu^{(2)} - 10 \|_{2,\tau}>\Delta$ for $\Delta < 1.392$.

{\footnotesize
	% Table 1

	\begin{table}[t!]
		%	\begin{tabular}{l | R{.7cm} R{.7cm} R{.7cm} | R{.7cm} R{.7cm} R{.7cm} | R{.7cm} R{.7cm} R{.7cm} | R{.7cm} R{.7cm} R{.7cm}} \hline \hline
		\begin{tabular}{l|r | rrr | rrr | rrr | rrr}		\hline \hline
			$\mu_a^{(1)}$&&\multicolumn{3}{c|}{$\tilde\sigma^2_0$}&\multicolumn{3}{c|}{$\tilde\sigma^2_1$} &\multicolumn{3}{c|}{$\tilde\sigma^2_2$}&\multicolumn{3}{c}{$\tilde\sigma^2_3$}\\
			$a$ &$d_0-\tfrac{1}{2}$ & 200 & 500 & 1000 & 200 & 500 & 1000 & 200 & 500 & 1000 & 200 & 500 & 1000 \\ 
			\hline 
			\addlinespace[.2cm]
			\multicolumn{11}{l}{\quad\textit{Panel A: iid errors}} \\ 
			0.37 & -0.15 & 0.0 & 0.0 & 0.0 & 0.0 & 0.0 & 0.0 & 0.0 & 0.0 & 0.0 & 0.0 & 0.2 & 0.0 \\ 
			0.89 & -0.10 & 0.0 & 0.1 & 0.0 & 0.0 & 0.2 & 0.0 & 0.0 & 0.0 & 0.0 & 0.0 & 0.2 & 0.1 \\ 
			1.18 & -0.05 & 0.2 & 0.9 & 0.6 & 0.0 & 1.0 & 0.3 & 0.0 & 1.4 & 0.4 & 0.1 & 0.9 & 0.6 \\ 
			\textbf{1.43} & \textbf{0.00} & \textbf{0.7} & \textbf{3.9} & \textbf{4.5} & \textbf{0.4} & \textbf{2.8} & \textbf{3.4} & \textbf{0.5} & \textbf{5.5} & \textbf{4.6} & \textbf{0.3} & \textbf{2.7} & \textbf{3.4} \\ 
			1.86 & 0.10 & 2.7 & 23.6 & 43.1 & 2.1 & 21.6 & 33.5 & 3.0 & 32.2 & 47.0 & 1.0 & 13.4 & 26.5 \\ 
			2.26 & 0.20 & 7.4 & 54.6 & 83.9 & 8.2 & 46.3 & 73.0 & 9.5 & 66.5 & 88.6 & 3.5 & 33.4 & 61.4 \\ 
			2.64 & 0.30 & 14.1 & 76.6 & 95.8 & 12.1 & 68.7 & 91.8 & 19.5 & 88.8 & 98.0 & 6.2 & 51.1 & 83.1 \\ 
			 
			\multicolumn{11}{l}{\quad\textit{Panel B: MA errors}} \\ 
			0.37 & -0.15 & 0.0 & 0.0 & 0.0 & 0.0 & 0.0 & 0.0 & 0.0 & 0.0 & 0.0 & 0.0 & 0.0 & 0.0 \\ 
			0.89 & -0.10 & 0.0 & 0.0 & 0.0 & 0.0 & 0.0 & 0.0 & 0.0 & 0.0 & 0.0 & 0.0 & 0.0 & 0.0 \\ 
			1.18 & -0.05 & 0.1 & 0.4 & 0.2 & 0.0 & 0.8 & 0.1 & 0.0 & 0.2 & 0.2 & 0.1 & 0.2 & 0.4 \\ 
			\textbf{1.43} & \textbf{0.00} & \textbf{0.3} & \textbf{4.8} & \textbf{4.9} & \textbf{0.1} & \textbf{4.1} & \textbf{4.2} & \textbf{0.5} & \textbf{7.0} & \textbf{5.3} & \textbf{0.1} & \textbf{4.4} & \textbf{4.1} \\ 
			1.86 & 0.10 & 4.3 & 38.3 & 58.7 & 3.7 & 30.9 & 53.3 & 5.5 & 45.4 & 69.0 & 3.0 & 21.9 & 39.2 \\ 
			2.26 & 0.20 & 12.6 & 76.8 & 94.8 & 11.6 & 69.3 & 91.4 & 16.3 & 83.8 & 97.8 & 9.4 & 52.5 & 79.5 \\ 
			2.64 & 0.30 & 29.5 & 93.7 & 99.9 & 28.6 & 87.6 & 98.8 & 33.4 & 97.4 & 99.7 & 21.4 & 75.4 & 94.0 \\ 

			\multicolumn{11}{l}{\quad\textit{Panel C: AR errors}} \\  
			0.37 & -0.15 & 0.0 & 0.0 & 0.0 & 0.0 & 0.0 & 0.0 & 0.0 & 0.0 & 0.0 & 0.1 & 0.0 & 0.0 \\ 
			0.89 & -0.10 & 0.0 & 0.3 & 0.0 & 0.2 & 0.1 & 0.0 & 0.1 & 0.3 & 0.0 & 0.3 & 0.6 & 0.1 \\ 
			1.18 & -0.05 & 0.4 & 1.3 & 1.1 & 0.1 & 1.5 & 0.9 & 0.0 & 2.3 & 1.3 & 0.4 & 2.4 & 1.3 \\ 
			\textbf{1.43} & \textbf{0.00} & \textbf{1.3} & \textbf{8.6} & \textbf{6.5} & \textbf{1.2} & \textbf{7.1} & \textbf{7.5} & \textbf{1.0} & \textbf{7.8} & \textbf{7.0} & \textbf{0.8} & \textbf{5.6} & \textbf{5.8} \\ 
			1.86 & 0.10 & 5.3 & 36.3 & 52.8 & 5.4 & 29.5 & 47.9 & 6.8 & 41.1 & 57.0 & 3.9 & 22.6 & 35.4 \\ 
			2.26 & 0.20 & 16.4 & 67.8 & 90.0 & 14.6 & 59.9 & 85.7 & 18.0 & 78.2 & 92.6 & 10.0 & 45.0 & 70.1 \\ 
			2.64 & 0.30 & 30.6 & 86.6 & 98.9 & 24.1 & 80.1 & 96.1 & 35.4 & 92.9 & 99.4 & 18.7 & 65.6 & 89.5 \\ 
			\hline \hline
		\end{tabular} \medskip
		\caption{\it Empirical rejection rates of the test \eqref{eq:test}  for the hypotheses  \eqref{h0avlspec}. The mean function is given by 
		\eqref{m1}, where   different values for the parameter $a$, different error processes, and sample sizes are considered.
		The lines in boldface correspond to  the boundary of the hypotheses. }
		\label{tab:mu1}
	\end{table}}

We consider  four different choices of time-dependent variance $\tilde{\sigma}^2(t)= \ex[G^2(t,\Fc_0)]$, that is
\begin{align*}
\tilde\sigma_0^2(t) &= 1, &  	
\tilde\sigma_1^2(t) &= \tfrac12 + t, \\
\tilde\sigma_2^2(t) &=1-\tfrac{1}{2}\cos(2\pi t),   &
\tilde\sigma_3^2(t) &= \tfrac{1}{2}+\id(t \geq 1/2)~,
\end{align*}
and  three classes of error processes $\{\eps_{i,n}:1\leq i\leq n\}_{n\in\N}$ in model \eqref{eq:model}, that is
\begin{align*}
(\text{IID}) \quad &~ \eps_{i,n}= \tilde{\sigma}_k(i/n)\eta_i\\
(\text{MA)} \quad &~ \eps_{i,n} = \tilde{\sigma}_k(i/n)\big( \eta_i + \tfrac{1}{2}\eta_{i-1}\big)/2\\
(\text{AR})  \quad &~ \eps_{i,n} = \tilde{\sigma}_k(i/n) \big( \eta_i + \tfrac{1}{2}\eps_{i-1,n}\big)/2,
\end{align*}
for $k\in\{0,1,2,3\}$, where $(\eta_i)_{i\in\Z}$ is an i.i.d.\ sequence of standard normal distributed random variables.

%\begin{figure}[tbp]
%	\centering \vspace{-.4cm}
	%	\subfloat[$\mu_a^{(1)}$: Different Choices of $a$]{\includegraphics[width=70mm]{mu1.pdf}}
	%	\subfloat[$\mu^{(2)}$: Different Choices of $\Delta$]{\includegraphics[width=70mm]{mu2.pdf}}
%	\mbox{\hspace{-.4cm}\includegraphics[width=65mm]{empRejRates.pdf} \hspace{-.5cm}
%		\includegraphics[width=65mm]{empRejRatesLRV.pdf}}\vspace{-.5cm}
%	\caption{\textit{Left: Empirical rejection rates in \% for $\mu_a^{\scs (1)}$, different choices of $a$ (x-axis), with variance function $\tilde \sigma_0$ and different choices of errors for test \eqref{eq:test} (left) and \eqref{eq:testLRV} (right). }}
%	\label{fig:empRejRates}
%\end{figure}

%\begin{figure}[tbp]
%	\centering \vspace{-.4cm}
	%	\subfloat[$\mu_a^{(1)}$: Different Choices of $a$]{\includegraphics[width=70mm]{mu1.pdf}}
	%	\subfloat[$\mu^{(2)}$: Different Choices of $\Delta$]{\includegraphics[width=70mm]{mu2.pdf}}
%	\mbox{\hspace{-.4cm}\includegraphics[width=65mm]{empRejRates2.pdf} \hspace{-.5cm}
%		\includegraphics[width=65mm]{empRejRatesLRV2.pdf}}\vspace{-.5cm}
%	\caption{\textit{Left: Empirical rejection rates in \% for $\mu^{\scs (2)}$, different choices of $\Delta^2$ (x-axis), with variance function $\tilde \sigma_0$ and different choices of errors for test \eqref{eq:test} (left) and \eqref{eq:testLRV} (right).}}
%	\label{fig:empRejRates2}
%\end{figure}

% Table 2

{\footnotesize
	\begin{table}[t!]
		%	\begin{tabular}{l | R{.7cm} R{.7cm} R{.7cm} | R{.7cm} R{.7cm} R{.7cm} | R{.7cm} R{.7cm} R{.7cm} | R{.7cm} R{.7cm} R{.7cm}} \hline \hline
		\begin{tabular}{l|r | rrr | rrr | rrr | rrr}		\hline \hline
			$\mu^{(2)}$&&\multicolumn{3}{c|}{$\tilde\sigma^2_0$}&\multicolumn{3}{c|}{$\tilde\sigma^2_1$} &\multicolumn{3}{c|}{$\tilde\sigma^2_2$}&\multicolumn{3}{c}{$\tilde\sigma^2_3$}\\
			$\Delta$ &$\Delta-d_0$ & 200 & 500 & 1000 & 200 & 500 & 1000 & 200 & 500 & 1000 & 200 & 500 & 1000 \\ 
			\hline 
			\addlinespace[.2cm]
			\multicolumn{11}{l}{\quad\textit{Panel A: iid errors}} \\
%			1.6 & -0.34 & 22.4 & 60.9 & 84.0 & 20.8 & 56.2 & 80.4 & 26.0 & 69.4 & 87.1 & 12.9 & 36.9 & 64.0 \\ 
			1.30 & -0.09 & 13.8 & 41.8 & 62.8 & 12.5 & 36.3 & 59.4 & 14.8 & 47.0 & 68.4 & 9.1 & 23.4 & 44.3 \\ 
			1.34 & -0.05 & 6.8 & 20.8 & 34.3 & 6.9 & 19.1 & 29.8 & 7.2 & 22.8 & 37.6 & 4.8 & 11.7 & 23.3 \\ 
			1.38 & -0.01 & 2.1 & 8.9 & 11.2 & 4.0 & 8.2 & 8.3 & 3.1 & 8.4 & 11.2 & 2.7 & 4.8 & 7.9 \\ 
			\bf 1.39 & \bf  0.00 & \bf  2.0 & \bf  5.0 & \bf  5.1 & \bf  2.2 & \bf  5.2 & \bf  5.4 & \bf  2.4 & \bf  5.1 & \bf  5.8 & \bf  2.4 & \bf  3.3 & \bf  4.5 \\ 
			1.41 & 0.02 & 1.0 & 2.6 & 1.1 & 1.8 & 3.0 & 1.0 & 1.2 & 2.6 & 1.5 & 1.8 & 2.0 & 2.4 \\ 
%			2.1 & 0.16 & 0.5 & 0.6 & 0.0 & 0.6 & 0.6 & 0.0 & 0.3 & 0.4 & 0.0 & 0.8 & 0.8 & 0.5 \\ 
			1.48 & 0.09 & 0.3 & 0.0 & 0.0 & 0.1 & 0.0 & 0.0 & 0.1 & 0.1 & 0.0 & 0.5 & 0.5 & 0.0 \\ 
%			2.3 & 0.36 & 0.1 & 0.0 & 0.0 & 0.0 & 0.0 & 0.0 & 0.0 & 0.0 & 0.0 & 0.2 & 0.0 & 0.0 \\ 
			1.55 & 0.16 & 0.1 & 0.0 & 0.0 & 0.0 & 0.0 & 0.0 & 0.0 & 0.0 & 0.0 & 0.1 & 0.0 & 0.0 \\ 
%			2.5 & 0.56 & 0.1 & 0.0 & 0.0 & 0.0 & 0.0 & 0.0 & 0.0 & 0.0 & 0.0 & 0.0 & 0.0 & 0.0 \\ 
						\multicolumn{11}{l}{\quad\textit{Panel B: MA errors}} \\ 
%			1.6 & -0.34 & 45.9 & 81.9 & 96.6 & 36.3 & 78.6 & 94.8 & 50.2 & 88.3 & 97.2 & 27.2 & 63.1 & 87.0 \\ 
			1.30 & -0.09 & 27.3 & 58.0 & 84.6 & 20.7 & 54.4 & 79.7 & 32.5 & 67.6 & 86.3 & 15.8 & 40.2 & 66.1 \\ 
			1.34 & -0.05 & 12.6 & 30.7 & 52.3 & 9.5 & 26.9 & 45.9 & 15.2 & 37.2 & 55.7 & 7.2 & 18.8 & 34.5 \\ 
			1.38 & -0.01 & 5.5 & 11.1 & 13.7 & 3.8 & 8.5 & 12.6 & 5.3 & 11.1 & 14.8 & 2.8 & 7.6 & 11.7 \\ 
			\bf 1.39 & \bf  0.00 & \bf  3.0 & \bf  5.0 & \bf  7.7 & \bf  3.4 & \bf  5.1 & \bf  6.2 & \bf  3.6 & \bf  6.9 & \bf  7.5 & \bf  2.2 & \bf  4.4 & \bf  5.9 \\
			1.41 & 0.02 & 1.6 & 2.0 & 1.5 & 1.3 & 1.2 & 0.9 & 1.0 & 2.0 & 0.8 & 1.3 & 2.3 & 1.5 \\ 
%			2.1 & 0.16 & 0.3 & 0.2 & 0.2 & 0.2 & 0.0 & 0.0 & 0.2 & 0.2 & 0.0 & 0.5 & 0.5 & 0.2 \\ 
			1.48 & 0.09 & 0.1 & 0.0 & 0.0 & 0.0 & 0.0 & 0.0 & 0.0 & 0.0 & 0.0 & 0.0 & 0.0 & 0.0 \\ 
%			2.3 & 0.36 & 0.0 & 0.0 & 0.0 & 0.0 & 0.0 & 0.0 & 0.0 & 0.0 & 0.0 & 0.0 & 0.0 & 0.0 \\ 
			1.55 & 0.16 & 0.0 & 0.0 & 0.0 & 0.0 & 0.0 & 0.0 & 0.0 & 0.0 & 0.0 & 0.0 & 0.0 & 0.0 \\ 
%			2.5 & 0.56 & 0.0 & 0.0 & 0.0 & 0.0 & 0.0 & 0.0 & 0.0 & 0.0 & 0.0 & 0.0 & 0.0 & 0.0 \\ 
			\multicolumn{11}{l}{\quad\textit{Panel C: AR errors}} \\ 
%			1.6 & -0.34 & 37.5 & 75.0 & 92.0 & 32.7 & 63.8 & 88.4 & 42.4 & 77.2 & 93.6 & 23.3 & 55.1 & 75.2 \\ 
			1.30 & -0.09 & 23.8 & 52.2 & 75.0 & 20.7 & 43.5 & 67.8 & 26.6 & 58.6 & 79.2 & 15.1 & 36.7 & 53.6 \\ 
			1.34 & -0.05 & 13.9 & 28.3 & 42.8 & 11.8 & 23.2 & 39.3 & 13.2 & 32.8 & 50.1 & 9.3 & 22.0 & 31.1 \\ 
			1.38 & -0.01 & 7.9 & 10.1 & 15.3 & 6.3 & 8.9 & 12.8 & 5.8 & 12.7 & 16.8 & 6.0 & 10.1 & 12.5 \\ 
			\bf 1.39 & \bf  0.00 & \bf  5.7 & \bf  7.5 & \bf  9.0 & \bf  4.5 & \bf  7.5 & \bf  8.6 & \bf  6.3 & \bf  8.0 & \bf  9.6 & \bf  4.2 & \bf  7.3 & \bf  9.9 \\
			1.41 & 0.02 & 3.2 & 2.6 & 2.5 & 2.9 & 2.4 & 2.1 & 2.2 & 3.3 & 2.1 & 3.5 & 3.7 & 3.0 \\ 
%			2.1 & 0.16 & 1.2 & 0.3 & 0.1 & 1.2 & 0.4 & 0.2 & 1.2 & 0.5 & 0.0 & 1.5 & 1.8 & 0.2 \\ 
			1.48 & 0.09 & 0.6 & 0.0 & 0.0 & 0.2 & 0.0 & 0.0 & 0.3 & 0.1 & 0.0 & 0.8 & 0.3 & 0.1 \\ 
%			2.3 & 0.36 & 0.3 & 0.0 & 0.0 & 0.0 & 0.0 & 0.0 & 0.2 & 0.0 & 0.0 & 0.3 & 0.0 & 0.0 \\ 
			1.55 & 0.16 & 0.1 & 0.0 & 0.0 & 0.0 & 0.0 & 0.0 & 0.0 & 0.0 & 0.0 & 0.1 & 0.0 & 0.0 \\ 
%			2.5 & 0.56 & 0.0 & 0.0 & 0.0 & 0.0 & 0.0 & 0.0 & 0.0 & 0.0 & 0.0 & 0.0 & 0.0 & 0.0 \\ 
			\hline \hline
		\end{tabular} \medskip
		\caption{\it Empirical rejection rates of the test \eqref{eq:test}  for the hypotheses  \eqref{h0allspec}. The mean function is given by 
		\eqref{m2}, where   different values for the threshold  $\Delta$, different error processes, and sample sizes are considered.
		The lines in boldface correspond to  the boundary of the hypotheses. 
 }
		\label{tab:mu2}
	\end{table}

}

The empirical rejection rates of the test  \eqref{eq:test}  for the hypotheses
$H_0: d_0\leq\Delta$  vs. $H_0: d_0 > \Delta$ are calculated by $N=1000$ simulation runs
and displayed in Table~\ref{tab:mu1} and Table~\ref{tab:mu2}.   The sample size is chosen as $n = 200,$ $500$ and $1000$ and the nominal level is $5\%$.
Table~\ref{tab:mu1}  shows the rejection probabilities
for different values of $a$ in the function $ \mu_{a}^{(1)}$ defined in \eqref{m1}, which yields to different values of $d_0$
in the hypotheses \eqref{h0avlspec}. On the other hand, in
 Table~\ref{tab:mu2} the function  $\mu^{(2)}$ and therefore the value $d_0$ is fixed and the threshold $\Delta$  in the hypotheses is varied.
The lines marked in boldface indicate the boundary of the null hypothesis, that is, the parameter where $d_0 = \Delta$.
More precisely,  note that the null hypothesis in  \eqref{h0avlspec} holds
 if and only if $d_0 \leq 0.5$ and we display exemplary results for the cases $d_0=0.35$, $0.4$, $0.45$ and $0.5$ in Table~\ref{tab:mu1},
 where the last case corresponds to the boundary of the null hypotheses.  The remaining cases $d_0=0.6$, $0.7$ and $0.8$
 represent three scenarios of the alternative in  \eqref{h0avlspec}.
Similarly, in Table~\ref{tab:mu2} the function  $\mu^{(2)}$ is fixed with $d_0 =  1.39$. Therefore, the null hypothesis  in \eqref{h0allspec} holds
if and only if  the threshold satisfies  $\Delta \ge 1.39$.  We observe in most  cases a  good  approximation of the nominal level 
at the boundary of the hypotheses and the test is also able to detect alternatives
with reasonable power. These  empirical findings  corresponds with the theoretical results derived in Section \ref{sec3}.

We conclude this section with a comparison of the new test \eqref{eq:test} with the test \eqref{eq:testLRV}  which relies on the estimation
of the (local) long-run variance.    For this purpose
we use the long-run variance estimator as proposed in  equation
(4.7) of \cite{dettewu2019} with bandwidths as suggested in this reference. In Table \ref{tab:comp} and \ref{tab:comp2} we display the rejection
probabilities for both tests for some of the models  considered in Table~\ref{tab:mu1} and Table~\ref{tab:mu2}, where we use the Lebesgue measure on the interval
$[0,1]$ for the calculation of the $L^2$-distances and the benchmark is given by $g(\mu ) = \int_0^1 \mu (x) \diff x$.
For the sake of brevity we restrict
ourselves to the sample size $n=500$ and the variance function $\tilde{\sigma}_0^2(t)=1$.
We observe that  the test \eqref{eq:testLRV} is conservative at the boundary of the hypotheses. As a consequence
the proposed test \eqref{eq:test} based on self-normalization is usually more powerful.

{\footnotesize
% Table 3 
\begin{table}[t!]
	\begin{tabular}{l|l | rr | rr | rr }		\hline \hline 
		& errors & \multicolumn{2}{c|}{i.i.d} & \multicolumn{2}{c|}{MA} & \multicolumn{2}{c}{AR}\\
		$a$ & $d_0-\Delta$ & \eqref{eq:test} & \eqref{eq:testLRV} & \eqref{eq:test} & \eqref{eq:testLRV} & \eqref{eq:test} & \eqref{eq:testLRV} \\ 
		\hline
		0.13 & -0.15 & 0.0 & 0.0 & 0.0 & 0.0 & 0.0 & 0.0 \\ 
		1.60 & -0.10 & 0.0 & 0.0 & 0.0 & 0.0 & 0.0 & 0.0 \\ 
		2.13 & -0.05 & 0.0 & 0.0 & 0.1 & 0.0 & 0.1 & 0.0 \\ 
		\textbf{2.57} & \textbf{0.00} & \textbf{2.2} & \textbf{0.0} & \textbf{1.6} & \textbf{0.0} & \textbf{4.3} & \textbf{0.4} \\ 
		2.97 & 0.05 & 14.0 & 3.0 & 20.1 & 0.9 & 22.6 & 3.5 \\ 
		3.35 & 0.10 & 35.0 & 24.2 & 58.1 & 20.5 & 46.3 & 26.8 \\ 
		3.71 & 0.15 & 61.9 & 67.1 & 84.8 & 75.7 & 72.9 & 71.7 \\ 
%		4.06 & 0.20 & 79.7 & 93.9 & 92.8 & 98.3 & 89.3 & 95.4 \\ 
%		4.40 & 0.25 & 90.0 & 99.8 & 98.3 & 100.0 & 95.6 & 99.8 \\ 
%		4.74 & 0.30 & 92.6 & 100.0 & 99.1 & 100.0 & 97.7 & 99.9 \\
		\hline \hline
	\end{tabular} \medskip
	\caption{\it Empirical rejection rates of tests \eqref{eq:test} and \eqref{eq:testLRV} for the hypotheses  \eqref{h0avlspec}.
	The mean function is given by 
	\eqref{m1}, where   different values for the parameter $a$ and  different error processes  are considered. The variance is $\tilde{\sigma}_0^2(t)=1$,  the  sample size  is $n=500$ and the line in boldface corresponds to  the boundary of the hypotheses. }
	\label{tab:comp}
\end{table}
	
	}

{\footnotesize
% Table 4

\begin{table}[t!]
	\begin{tabular}{l|l | rr | rr | rr }		\hline \hline 
		& errors & \multicolumn{2}{c|}{i.i.d} & \multicolumn{2}{c|}{MA} & \multicolumn{2}{c}{AR}\\
		$\Delta$ & $\Delta-d_0$ & \eqref{eq:test} & \eqref{eq:testLRV} & \eqref{eq:test} & \eqref{eq:testLRV} & \eqref{eq:test} & \eqref{eq:testLRV} \\ 
		\hline
%		1.00 & -0.30 & 96.7 & 100.0 & 99.9 & 100.0 & 99.3 & 100.0 \\ 
%		1.05 & -0.25 & 93.2 & 99.8 & 99.6 & 100.0 & 97.6 & 100.0 \\ 
%		1.10 & -0.20 & 88.0 & 98.4 & 97.3 & 99.9 & 92.5 & 98.6 \\ 
		1.15 & -0.15 & 73.8 & 83.9 & 90.3 & 93.1 & 81.8 & 86.2 \\ 
		1.20 & -0.10 & 48.0 & 45.0 & 69.3 & 51.7 & 58.3 & 50.4 \\ 
		1.25 & -0.05 & 22.9 & 10.5 & 32.1 & 6.7 & 31.3 & 12.4 \\ 
		\textbf{1.30} & \textbf{0.00} & \textbf{5.5} & \textbf{0.5} & \textbf{4.9} & \textbf{0.1} & \textbf{8.0} & \textbf{0.7} \\ 
		1.35 & 0.05 & 0.5 & 0.0 & 0.1 & 0.0 & 1.4 & 0.1 \\ 
		1.40 & 0.10 & 0.0 & 0.0 & 0.0 & 0.0 & 0.1 & 0.0 \\ 
		1.45 & 0.15 & 0.0 & 0.0 & 0.0 & 0.0 & 0.0 & 0.0 \\ 
		\hline \hline
	\end{tabular} \medskip
	\caption{\it Empirical rejection rates of tests \eqref{eq:test} and \eqref{eq:testLRV} for  the hypotheses \eqref{h0allspec}. The mean function is given by 
	\eqref{m2}, where   different values for the threshold  $\Delta$ and  different error processes are considered.
	 The variance is $\tilde{\sigma}_0^2(t)=1$,  the  sample size  is $n=500$ and the line in boldface corresponds to  the boundary of the hypotheses.  }
	\label{tab:comp2}
\end{table}}

\subsection{Case Study}

Time series with possibly smoothly varying mean naturally arise in the field of meteorology. To illustrate the proposed methodology, we consider the mean of daily minimal temperatures (in degrees Celsius) over the month of July for a period of approximately 120 years in eight different places in Australia.
At each station  we tested for relevant deviations of the temperature from    the mean temperature calculated for an historic reference period ranging from the late 19th century to 1925 at that station. As a threshold $\Delta$, we chose $0.25$, $0.5$ and $0.75$ degrees Celsius.
Exemplary, the observed temperature curves at the weather station in Cape Otway, Gayndah and Melbourne and the mean over all weather stations are plotted in Figure \ref{fig:realData}, alongside with their estimated smooth mean curves $\tilde \mu$ and the estimated benchmarks $\hat{g}$.

\begin{figure}[t!]
	\vspace{-.4cm}
\mbox{\hspace{-.5cm}
		\includegraphics[width=60mm]{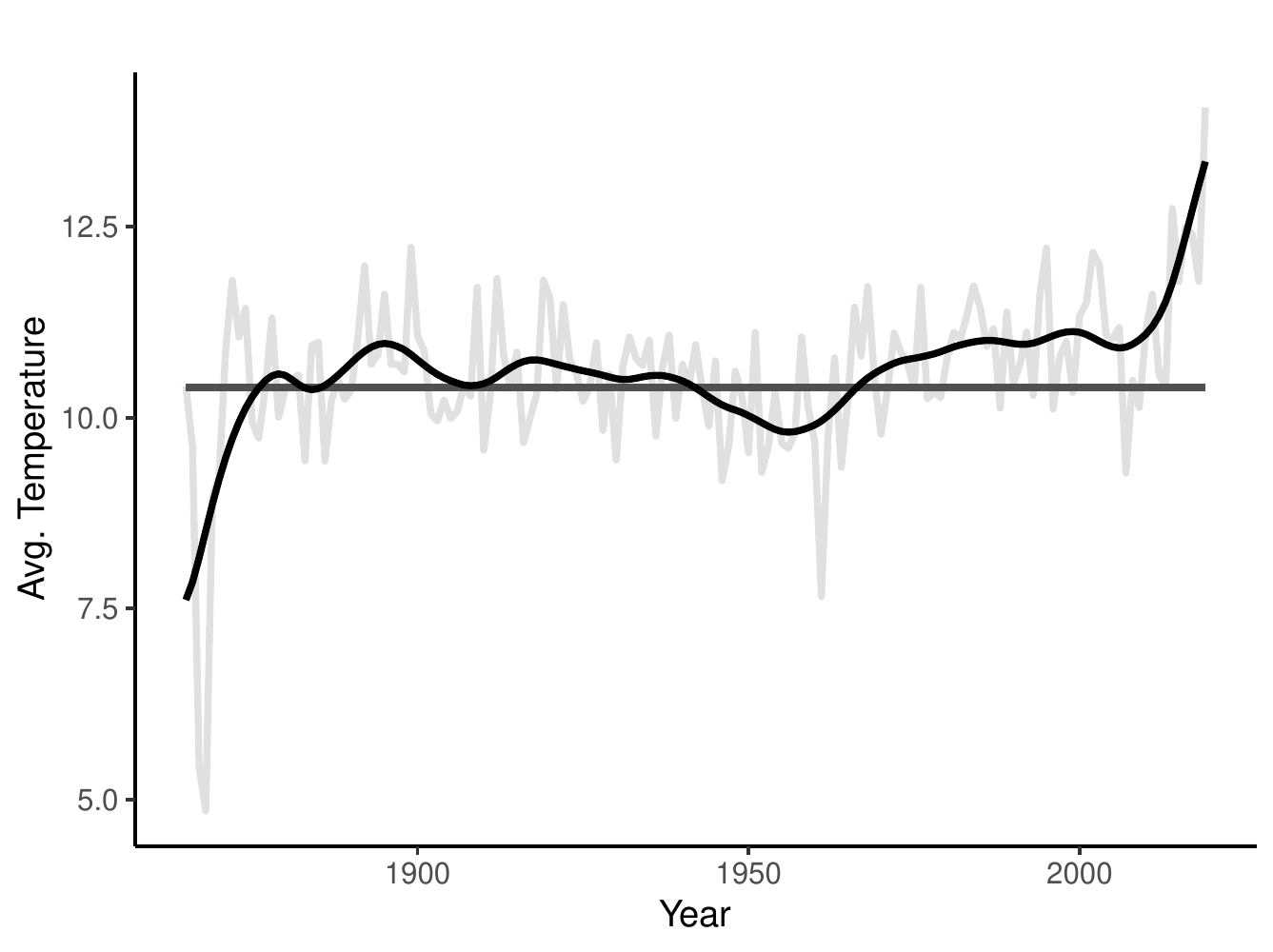}
		\includegraphics[width=60mm]{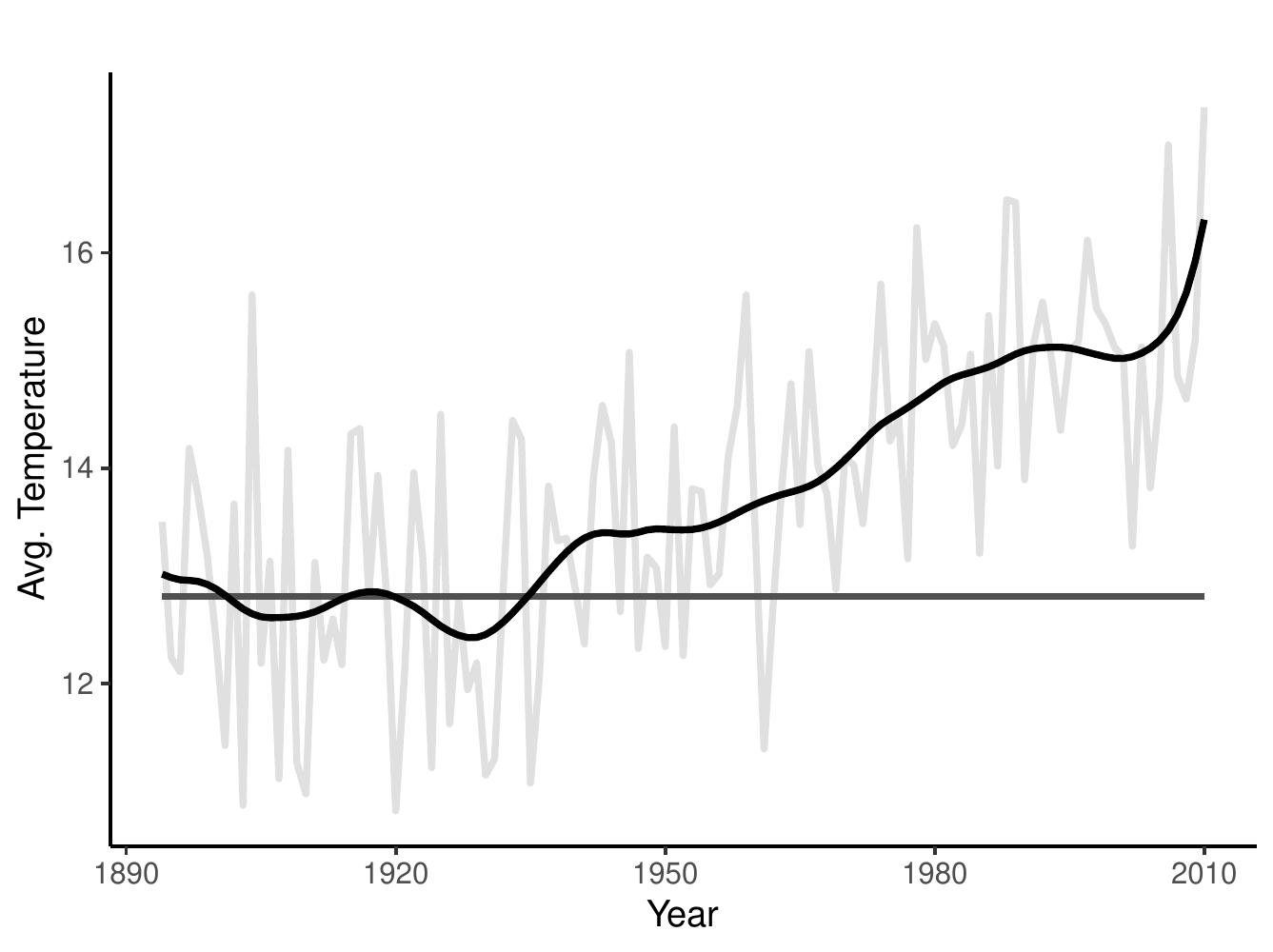}}  \\
	\vspace{-.28cm}
	\mbox{\hspace{-.5cm}
		\includegraphics[width=60mm]{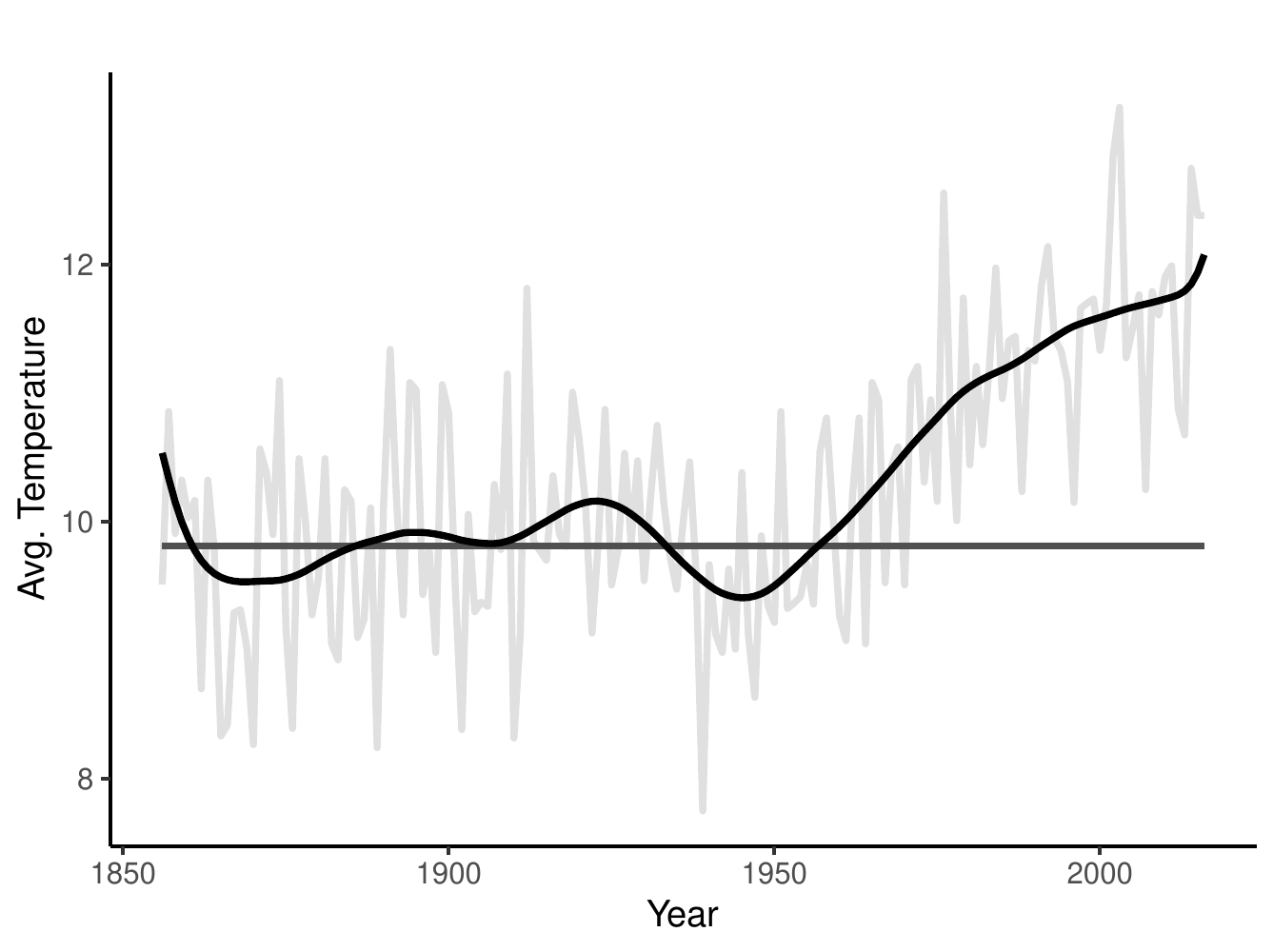}
		\includegraphics[width=60mm]{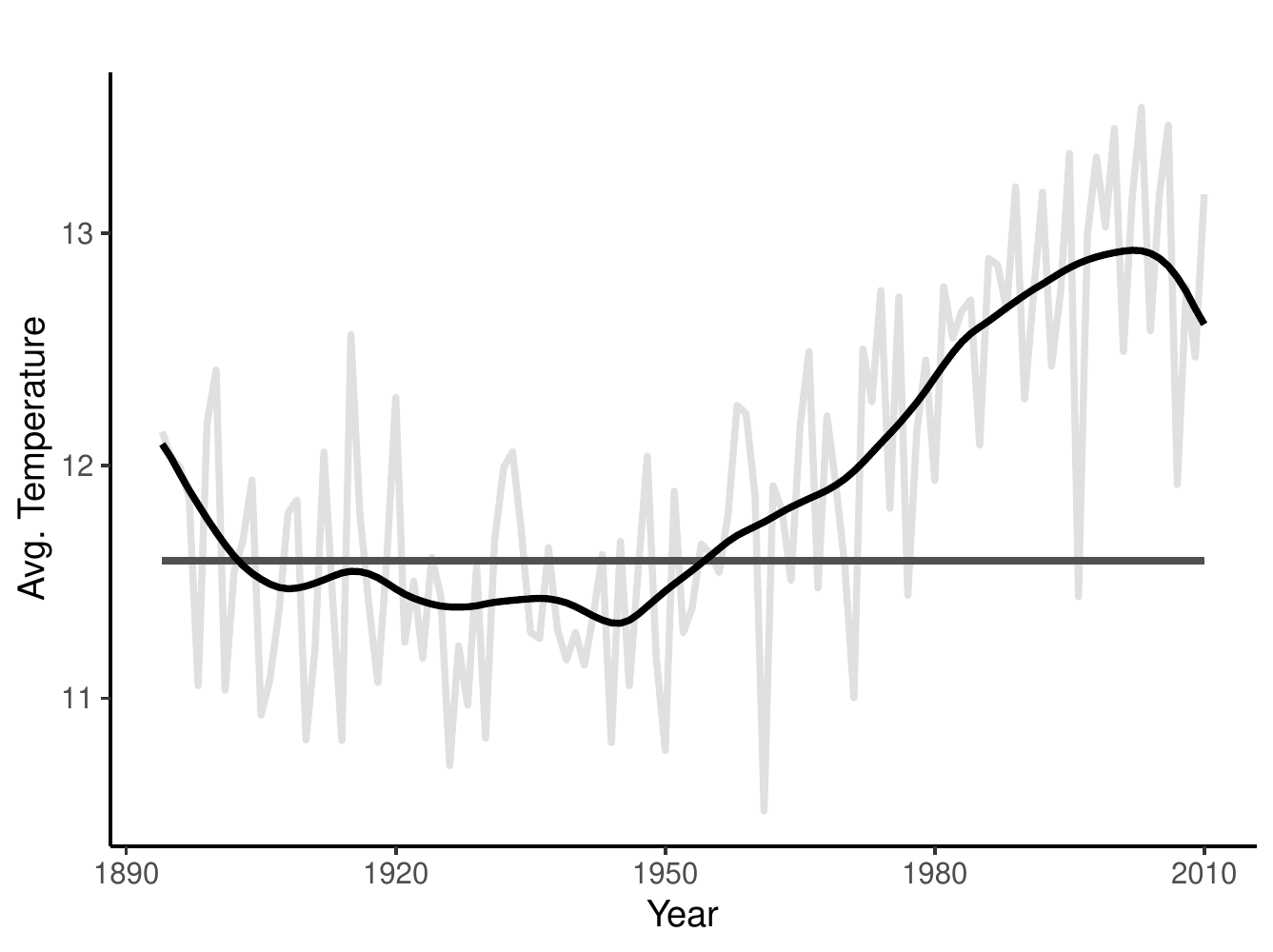}}
	\caption{\it{ Raw data of the temperature (light grey), estimated benchmark   (dark grey) and estimated smooth mean function for different weather stations. Top left: Cape Otway. Top right: Gayndah. Bottom left: Melbourne. Bottom right: Australia (mean). }}
	\label{fig:realData}
\end{figure}

The results for all stations under consideration can be found in Table~\ref{tab:app}. For test \eqref{eq:test}, most $p$-values are significant for $\Delta=0.25$  degrees Celsius. Further, two $p$-values for $\Delta = 0.5$ are significant. The test \eqref{eq:testLRV} does not yield a significant $p$-value below $0.05$ at any station. Test \eqref{eq:test} based on the proposed self-normalization procedure
seems to be more powerful than \eqref{eq:testLRV}, which confirms the numerical findings of the  simulation study.

{\small
	\begin{table}[t!]
		\begin{tabular}{r | r r r | r r r  } \hline \hline
			Test & \multicolumn{3}{c|}{\eqref{eq:test}} & \multicolumn{3}{c}{\eqref{eq:testLRV}}\\
			$\Delta$ & \multicolumn{1}{c}{0.25} & \multicolumn{1}{c}{0.5} & \multicolumn{1}{c|}{0.75} & \multicolumn{1}{c}{0.25} & \multicolumn{1}{c}{0.5} & \multicolumn{1}{c}{0.75} \\
			\hline
			Boulia Airport & \bf 4.2 & 7.8 & 24.2 & 22.1 & 28.5 & 40.8 \\
			Cape Otway Lighthouse & 7.3 & 99.3 & 100.0 & 41.6 & 70.1 & 96.1 \\
			Gayndah Post Office & \bf 0.7 & \bf 1.5 & 8.4 & 11.7 & 18.3 & 33.3 \\
			Gunnedah Pool & \bf 0.1 & \bf 0.5 & 11.8 & 13.5 & 22.2 & 41.9 \\
			Hobart & 5.9 & 53.7 & 98.6 & 25.5 & 51.1 & 87.9 \\
			Melbourne & \bf 2.3 & 59.1 & 99.7 & 29.5 & 51.5 & 84.1 \\
			Robe & 48.8 & 99.6 & 100.0 & 49.7 & 85.8 & 99.8 \\
			Sydney & \bf 2.3 & 98.4 & 100.0 & 39.0 & 62.6 & 90.6 \\
			Australia (mean) & \bf 0.4 & 99.8 & 100.0 & 37.4 & 65.5 & 94.5 \\
			\hline \hline
		\end{tabular} \medskip
		\caption{\it $p$-values of tests \eqref{eq:test} and \eqref{eq:testLRV} for the respective null hypotheses in percent. Significant $p$-values (below 0.05) are in boldface. }
		\label{tab:app}
	\end{table}
}

\medskip

\noindent {\bf Acknowledgements}
This work has been supported in part by the
Collaborative Research Center ``Statistical modeling of nonlinear
dynamic processes'' (SFB 823, Project A1,  C1) of the German Research Foundation (DFG).

\bibliographystyle{chicago}

\newpage

\appendix

\section{Proofs of main results} \label{secA}

In this section we will provide proofs of the theoretical statements in this paper. We begin with some preliminary results regarding the
uniform approximation of the  sequential estimators   of the regression function, which are of own interest and  required
for the proofs of the   main results in Section \ref{sec3}, which     will be given in Section \ref{sec:rem} and \ref{sec:proofs}.

%% ====================
\subsection{Sequential estimators of the regression function } \label{sec:details}
%% ====================

Recall the definition of the sequential Jackknife estimator $\tilde{\mu}_{h_n}(\lambda,t)$
in \eqref{eq:defJackknife} and define
\begin{align}
\label{hol7}
 K^*(x)=2\sqrt{2}K(\sqrt{2}x)-K(x)
\end{align}
 as the corresponding kernel.
  The following two results provide  stochastic expansions
for the difference $\tilde{\mu}_{h_n} - \mu$ uniformly with respect to $\lambda $ and $t$. Lemma \ref{thm:GaussianApprox} considers the case where the argument
$t$ stays away from the boundary, while a stochastic  expansion for the other case   is derived in Lemma \ref{thm:GaussianApproxRem} below.

\begin{lemma}\label{thm:GaussianApprox}
	Let $h_n\to 0$ and $b_n\to \infty$ be sequences with $b_n=o(nh_n)$ and define $I_n=[h_n,1-h_n]$.  If  Assumptions \ref{assumption:C2}, \ref{assump:kern} and \ref{assumption:LS} are satisfied,	we have
	\begin{equation*}%\label{eq:SNGaussianApprox}
	\sup_{t\in I_n, \lambda\in [\zeta,1]} \Big| \lambda\big(\tilde{\mu}_{h_n}(\lambda,t)-\mu(t)\big)-\frac{1}{nh_n}\sum_{i=1}^{\lfloor \lambda n\rfloor}\eps_{T_i,n} K^*_{h_n}(T_i/n-t) \Big|=\mathcal{O}\big(h_n^3+\tfrac{b_n}{nh_n}\big),
	 \end{equation*}
where 	 $K^*_{h_n}  (\cdot ) = K^{*} ( \cdot /h_{n})$
	 and $K^* $ is defined in \eqref{hol7}
\end{lemma}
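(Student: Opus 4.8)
The plan is to start from the closed form of the sequential local linear estimator. Writing $u_i=(T_i/n-t)/h_n$ and introducing the empirical kernel moments
$$S_{n,j}(\lambda,t)=\frac{1}{nh_n}\sum_{i=1}^{\lfloor\lambda n\rfloor}u_i^{\,j}\,K_{h_n}(T_i/n-t),\qquad j=0,1,2,$$
the normal equations associated with \eqref{eq:defLocLinEst} give $\hat{\mu}_{h_n}(\lambda,t)=\sum_i W_i(\lambda,t)X_{T_i,n}$, where the equivalent-kernel weights $W_i=\frac{1}{nh_n}\frac{S_{n,2}-S_{n,1}u_i}{S_{n,0}S_{n,2}-S_{n,1}^2}K_{h_n}(T_i/n-t)$ satisfy $\sum_i W_i=1$ and $\sum_i W_i(T_i/n-t)=0$. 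Substituting $X_{T_i,n}=\mu(T_i/n)+\eps_{T_i,n}$ splits $\hat{\mu}_{h_n}(\lambda,t)-\mu(t)$ into a deterministic bias term $\sum_i W_i(\mu(T_i/n)-\mu(t))$ and a stochastic term $\sum_i W_i\eps_{T_i,n}$, and by linearity of \eqref{eq:defJackknife} the same decomposition carries over to $\tilde{\mu}_{h_n}$.

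For the bias term I would Taylor-expand $\mu$ around $t$: by Assumption \ref{assumption:C2} the second derivative is Lipschitz, so $\mu(T_i/n)-\mu(t)=\mu'(t)(T_i/n-t)+\tfrac12\mu''(t)(T_i/n-t)^2+R_i$ with $|R_i|\le c\,|T_i/n-t|^3\le c\,h_n^3$ on the support of the kernel. The weight identities annihilate the constant and linear parts, and the algebraic fact $2(h_n/\sqrt2)^2-h_n^2=0$ makes the Jackknife combination \eqref{eq:defJackknife} cancel the leading quadratic contribution; what survives is the cubic remainder, which is $\Oc(h_n^3)$ uniformly since $\sum_i|W_i|=\Oc(1)$. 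The residual from the imperfect cancellation of the two second moments is of smaller order under Assumption \ref{assump:seq}.

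The core of the argument is a uniform approximation of the empirical moments. Exploiting the equidistribution built into the permutation $T$—each completed ``round'' of blocks covers one residue class modulo $b_n$, hence a regular subgrid of spacing $b_n/n$—I would establish
$$\sup_{t\in I_n,\lambda\in[\zeta,1]}\Big(\big|S_{n,0}(\lambda,t)-\lambda\big|+\big|S_{n,1}(\lambda,t)\big|+\big|S_{n,2}(\lambda,t)-\lambda\mu_2(K)\big|\Big)=\Oc\!\big(\tfrac{b_n}{nh_n}\big),$$
where $\mu_2(K)=\int u^2K(u)\diff u$; the factor $b_n$ is the accumulated rounding/Riemann error over the $\approx\lambda b_n$ completed residue classes intersecting the window $[t-h_n,t+h_n]$, the kernel moments being handled via Assumption \ref{assump:kern}. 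Feeding these into the ratio defining $W_i$ shows that $\lambda\hat{\mu}_{h_n}(\lambda,t)$ differs from $\frac{1}{nh_n}\sum_i K_{h_n}(T_i/n-t)\eps_{T_i,n}$ (up to the bias) only through factors of the form $(\lambda-S_{n,0})/\lambda$ and $S_{n,1}/\lambda$ multiplying the weighted error sums $\frac{1}{nh_n}\sum_i u_i^{\,j}K_{h_n}(T_i/n-t)\eps_{T_i,n}$; the latter I would bound uniformly in $(t,\lambda)$ by a maximal inequality built on the geometric decay of $\delta_4(G,\cdot)$ and the eighth-moment condition in Assumption \ref{assumption:LS}. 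Forming the Jackknife combination finally converts the kernel $K$ into $K^*$ of \eqref{hol7}, producing exactly the claimed leading term.

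To pass from pointwise to uniform statements I would discretize: in $t$ the estimator is Lipschitz with constant $\Oc(h_n^{-1})$ (through the Lipschitz continuity of $K$ in Assumption \ref{assump:kern}), so a polynomially fine grid suffices, while in $\lambda$ it is piecewise constant, changing only at the values $\lambda=k/n$. The main obstacle is exactly this block/permutation bookkeeping in the moment approximation: the completed rounds behave like a regular subgrid, but the last, partial round is a shifted subgrid covering only an initial segment $[0,r b_n/n]$ of $[0,1]$, and one must check that its contribution—together with the stochastic sums it multiplies—stays within the stated rate uniformly in $\lambda$. This is where the rate constraints of Assumption \ref{assump:seq} (notably $b_n^2=o(nh_n)$ and $b_n^3=o(n)$) enter. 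The boundary region $t\in[0,h_n]\cup[1-h_n,1]$ is excluded here and treated separately in Lemma \ref{thm:GaussianApproxRem}.
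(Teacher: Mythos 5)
Your proposal is correct and takes essentially the same route as the paper's own proof: both hinge on the uniform approximation $\sup_{t,\lambda}\big|\tfrac{1}{nh_n}S_{n,j}(\lambda,t)-\lambda\int x^jK(x)\,\diff x\big|=\Oc\big(\tfrac{b_n}{nh_n}\big)$ obtained from the block structure of the permutation $T$, followed by a Taylor expansion of $\mu$ and the Jackknife cancellation of the $h_n^2$ bias term, which converts $K$ into $K^*$ (the paper delegates this last piece of local-linear algebra to Lemma B.1 of Dette et al.\ (2019), which you instead write out via equivalent-kernel weights). One minor remark: your appeals to Assumption \ref{assump:seq} are unnecessary --- the lemma assumes only $b_n=o(nh_n)$, and the residual terms you invoke it for (imperfect moment cancellation, the partial last round of blocks) are already of order $\Oc\big(h_n^3+\tfrac{b_n}{nh_n}\big)$ under that condition alone.
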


\begin{proof}
	Define
	$$ S_{n,j}(\lambda,t) = \sum_{i=1}^{\lfloor \lambda n\rfloor}\big(\tfrac{T_i-nt}{nh_n}\big)^j K_{h_n}\big(\tfrac{T_i}{n}-t\big) \quad \text{and} \quad R_{n,j}(\lambda,t) = \sum_{i=1}^{\lfloor \lambda n\rfloor}X_{T_i,n}\big(\tfrac{T_i-nt}{nh_n}\big)^j K_{h_n}\big(\tfrac{T_i}{n}-t\big),  $$
	for $j\in\{0,1,2\}$.
	Note that for  the calculation of the local linear estimator $\hat{\mu}_{h_n}(\lambda,t)$ in \eqref{eq:defLocLinEst}
	we have to minimize
	the function
	$$
	f(b_0,b_1)= \sum_{i=1}^{\lfloor \lambda n\rfloor} \big(X_{T_i,n}-b_0-b_1(T_i/n-t)\big)^2 K_{h_n}(T_i/n-t),
	$$
	which
	is differentiable with partial derivatives
	$$
	\frac{\partial f}{\partial b_j} (b_0,b_1)=- 2 h_n^j\big(R_{n,j}(\lambda,t)-b_0 S_{n,j}(\lambda,t)-b_1h_nS_{n,j+1}(\lambda,t)\big),$$
	for $j\in\{0,1\}$ and Hessian matrix
	\begin{equation}\label{eq:Hessian_matrix}
	\mathbf{H}_f = 2\left(\begin{array}{cc}
	S_{n,0}(\lambda,t) & h_nS_{n,1}(\lambda,t)  \\
	h_nS_{n,1}(\lambda,t) & h_n^2S_{n,2}(\lambda,t)
	\end{array}\right). \end{equation}
	In the following discussion we  will show that
	\begin{equation} \label{eq:approxSn} \sup_{\lambda\in[\zeta,1]}\bigg|\frac{1}{nh_n} S_{n,j}(\lambda,t) - \lambda \int_{-1}^1 x^j K(x)\diff x\bigg| =  \Oc\big(\tfrac{b_n}{nh_n}\big),\end{equation}
	for $j\in\{0,1,2\}$. If  this result is true, the proof follows by arguments similar to those used in the proof of Lemma B.1 of \cite{Dette2019}.
	To be precise, note that $$S_{n,0}(\lambda,t)S_{n,2}(\lambda,t)-S_{n,1}^2(\lambda,t)>0$$ for any $\lambda\in [\zeta,1]$ and almost every $n\in\N$. This means, that the
	Hessian matrix $\mathbf{H}_f$ is positive definite and both partial derivatives vanish if any only if
	\begin{equation}\label{eq:matrix_mult}
	\left(\begin{array}{cc}
	b_0 \\
	b_1
	\end{array}\right)
	=
	\left(\begin{array}{cc}
	S_{n,0}(\lambda,t) & h_nS_{n,1}(\lambda,t) \\
	S_{n,1}(\lambda,t) & h_nS_{n,2}(\lambda,t)
	\end{array} \right)^{-1}
	\left(\begin{array}{cc}
	R_{n,0}(\lambda,t) \\
	R_{n,1}(\lambda,t)
	\end{array}\right).
	\end{equation}
	By \eqref{eq:approxSn} and a Taylor expansion, it follows that
	\begin{multline*}
	\sup_{t\in I_n, \lambda\in [\zeta,1]} \Big| \lambda\big(\hat{\mu}_{h_n}(\lambda,t)-\mu(t)\big)-\frac{1}{nh_n}\sum_{i=1}^{\lfloor \lambda n\rfloor}\eps_{T_i,n} K_{h_n}(T_i/n-t)-\frac{\lambda}{2} h_n^2 \mu''(t)\int_{-1}^1 x^2K(x)\diff x \Big|\\=\mathcal{O}\big(h_n^3+\tfrac{b_n}{nh_n}\big). \end{multline*}
	The   statement of Lemma \ref{thm:GaussianApprox}  now  follows from the definition of the Jackknife estimator $\tilde{\mu}_{h_n}$ in \eqref{eq:defJackknife}.

	To finish the proof, we now  show the remaining estimate  \eqref{eq:approxSn}. For this purpose let  $\lambda\in[\zeta,1]$ and define  $B(\lambda)=\bigcup_{j=1}^{\ell_n} B_j(\lambda)$, where the sets $B_j(\lambda)$ are defined by
	$$
	B_j(\lambda)=\Big\{(j-1)b_n+1,\dots,(j-1)b_n+\lfloor\tfrac{\lambda n-1}{\ell_n}\rfloor +\id(j-1\leq \lfloor \lambda n\rfloor -1 \mod \ell_n) \Big\}.
	$$
	Note that (by Assumption \ref{assump:kern}) the kernel $K$ is Lipschitz continuous with support  $[-1,1]$, which implies
	$$K\big(\tfrac{i-nt}{nh_n}\big)=\left\{ \begin{array}{ll}
	K\big(\tfrac{jb_n-nt}{nh_n}\big)+\Oc\big(\tfrac{b_n}{nh_n}\big)&\text{if}~\big|i-nt\big|\leq nh_n,\\
	0&\text{else,}
	\end{array}\right.$$
	%$$K\big(\tfrac{i-nt}{nh_n}\big)=\Big(K\big(\tfrac{jb_n-nt}{nh_n}\big)+\Oc\big(\tfrac{b_n}{nh_n}\big)\Big) \id\big(\big|\tfrac{i-nt}{nh_n}\big|\leq 1\big).$$
	for $i\in B_j(\lambda)$, where the error term $\Oc(\tfrac{b_n}{nh_n})$ only depends on the function $K$ and, in particular, does not depend on $\lambda$. Thus, if follows that
	\begin{align*}
	S_{n,k}(\lambda,t)
	&=  \sum_{i=1}^{\lfloor \lambda n\rfloor} \big(\tfrac{T_i-nt}{nh_n}\big)^k K_{h_n}\big(\tfrac{T_i}{n}-t\big) =  \sum_{i\in B(\lambda)} \big(\tfrac{i-nt}{nh_n}\big)^j K_{h_n}\big(\tfrac{i}{n}-t\big)\\
	&=  \sum_{j=1}^{\ell_n} \sum_{i\in B_j(\lambda)}  \big(\tfrac{i-nt}{nh_n}\big)^k K_{h_n}\big(\tfrac{i}{n}-t\big)\\
	&=  \sum_{j=1}^{\ell_n} \Big( \lfloor \tfrac{\lambda n -1}{\ell_n}\rfloor + \id\big(j-1\leq \lfloor \lambda n\rfloor -1 \mod \ell_n\big) \Big) \big(\tfrac{jb_n-nt}{nh_n}\big)^k K_{h_n}\big(\tfrac{jb_n}{n}-t\big)+\Oc(b_n),
	\end{align*}
	uniformly in $\lambda$.	 As  the kernel $K$ has support  $[-1,1]$  the only non-zero summands in the last expression  are those with index $j\in[\tfrac{n(t-h_n)}{b_n},\tfrac{n(t+h_n)}{b_n}]$. Moreover, it holds that
	$$ \sup_{\lambda\in[\zeta,1]}\bigg|\frac{\lambda}{\ell_nh_n} \sum_{j=-\lfloor \ell_nh_n\rfloor}^{\lfloor \ell_nh_n \rfloor}\big(\tfrac{jb_n}{nh_n}\big)^j K\big(\tfrac{jb_n}{nh_n}\big)- \lambda \int_{-1}^1 x^j K(x)\diff x \bigg| = \Oc\big(\tfrac{b_n}{nh_n}\big), $$
	which implies \eqref{eq:approxSn}.
\end{proof}

\begin{lemma}\label{thm:GaussianApproxRem}
	Let $I_n^c = [0,1]\setminus I_n$, $\kappa_{j,h_n}(t)=\int_{-t/h_n}^{(1-t)/h_n} x^j K(x)\diff x$, for $j\in\N_0$, and $$c_{h_n,i}(t)=\frac{\kappa_{2,h_n}(t)-\kappa_{1,h_n}(t)\Big(\frac{T_i-nt}{nh_n}\Big)}{\kappa_{0,h_n}(t)\kappa_{2,h_n}(t)-\kappa_{1,h_n}^2(t)},$$
	for $i\in\{1,\dots,n\}$. If the assumptions of  Lemma \ref{thm:GaussianApprox} are satisfied, it holds
	\begin{multline}\label{eq:SNGaussianApproxRemainder}  \sup_{\substack{t\in I_n^c,\\ \lambda\in [\zeta,1]}} \Big| \lambda\big(\tilde{\mu}_{h_n}(\lambda,t)-\mu(t)\big)-\frac{1}{nh_n}\sum_{i=1}^{\lfloor \lambda n\rfloor}\eps_{T_i,n} \Big\{2c_{\frac{h_n}{\sqrt{2}},i}(t)K_{\frac{h_n}{\sqrt{2}}}\big(\tfrac{T_i}{n}-t\big)-c_{h_n,i}(t)K_{h_n}\big(\tfrac{T_i}{n}-t\big)\Big\} \Big|\\=\mathcal{O}\big(h_n^2+\tfrac{b_n}{nh_n}\big). \end{multline}
\end{lemma}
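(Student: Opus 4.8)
The plan is to follow the architecture of the proof of Lemma \ref{thm:GaussianApprox}, replacing the full-support moments $\int_{-1}^1 x^j K(x)\diff x$ by the truncated boundary moments $\kappa_{j,h_n}(t)$ that already appear in the definition of $c_{h_n,i}(t)$. As a first step I would re-examine the normal equations \eqref{eq:matrix_mult} for $\hat\mu_{h_n}(\lambda,t)$, now allowing $t\in I_n^c$. The crucial quantitative input is the boundary analogue of \eqref{eq:approxSn}, namely
$$\sup_{t\in I_n^c,\,\lambda\in[\zeta,1]}\Big|\tfrac{1}{nh_n}S_{n,j}(\lambda,t)-\lambda\,\kappa_{j,h_n}(t)\Big|=\Oc\big(\tfrac{b_n}{nh_n}\big),\qquad j\in\{0,1,2\},$$
which I would establish by the identical block decomposition $B(\lambda)=\bigcup_j B_j(\lambda)$ and Riemann-sum argument used there; the only change is that the truncation of the support of $K$ at $t\pm h_n$ now produces the partial integral $\kappa_{j,h_n}(t)=\int_{-t/h_n}^{(1-t)/h_n}x^jK(x)\diff x$ in place of the full one, while the Lipschitz continuity of $K$ keeps the per-block error at $\Oc(b_n/(nh_n))$ uniformly in $t$ and $\lambda$.

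With this in hand I would invert the $2\times2$ system. Its determinant equals $\lambda^2\big(\kappa_{0,h_n}(t)\kappa_{2,h_n}(t)-\kappa_{1,h_n}^2(t)\big)+\Oc(b_n/(nh_n))$, and by the Cauchy--Schwarz inequality applied to $K$ on the truncated range this quantity is strictly positive and, since $t/h_n$ ranges over the compact set $[0,1]$ for $t\in I_n^c$, bounded away from zero uniformly. Solving for the first coordinate then produces exactly the equivalent-kernel weight $c_{h_n,i}(t)$, so that
$$\lambda\big(\hat\mu_{h_n}(\lambda,t)-\mu(t)\big)=\tfrac{1}{nh_n}\sum_{i=1}^{\lfloor\lambda n\rfloor}\eps_{T_i,n}\,c_{h_n,i}(t)K_{h_n}\big(\tfrac{T_i}{n}-t\big)+\text{bias}_{h_n}(\lambda,t)+\Oc\big(\tfrac{b_n}{nh_n}\big).$$
For the deterministic part I would Taylor-expand $\mu(T_i/n)$ to second order about $t$; the defining moment conditions of $c_{h_n,i}(t)$ (reproduction of the constant, up to scaling by $\lambda$, and annihilation of the linear term, both with the same $\Oc(b_n/(nh_n))$ error) remove the $\mu(t)$ and $\mu'(t)$ contributions, the quadratic term leaves a bias of the form $\tfrac{\lambda}{2}h_n^2\mu''(t)\,\beta(t/h_n)+\Oc(h_n^3)$, where $\beta$ is the effective second moment $(\kappa_2^2-\kappa_1\kappa_3)/(\kappa_0\kappa_2-\kappa_1^2)$ of the boundary equivalent kernel, and Assumption \ref{assumption:C2} controls the remainder.

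Finally I would substitute into the Jackknife definition \eqref{eq:defJackknife}. The stochastic terms combine into precisely the kernel block $2c_{h_n/\sqrt2,i}(t)K_{h_n/\sqrt2}-c_{h_n,i}(t)K_{h_n}$ appearing in \eqref{eq:SNGaussianApproxRemainder}. The decisive and only genuinely new point is the behaviour of the bias: writing $\theta=t/h_n$, the Jackknife bias is proportional to $h_n^2\mu''(t)\big(\beta(\sqrt2\,\theta)-\beta(\theta)\big)$. In the interior case of Lemma \ref{thm:GaussianApprox} one has $\theta\ge1$, the truncated moments coincide with the full ones, $\beta$ is constant, and this difference vanishes, which is why the rate there improves to $h_n^3$; on the boundary $\theta\in[0,1)$ and the two rescaled kernels see different truncations, so $\beta(\sqrt2\,\theta)\ne\beta(\theta)$ and the $h_n^2$ term survives, which is exactly the source of the weaker rate $\Oc(h_n^2+b_n/(nh_n))$ in the assertion. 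The main obstacle I anticipate is propagating the uniform-in-$t$, uniform-in-$\lambda$ control through the matrix inversion while keeping the determinant bounded away from zero, together with the careful bookkeeping needed to verify that the boundary bias genuinely fails to cancel.
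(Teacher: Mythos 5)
Your proposal is correct and follows essentially the same route as the paper's proof: the boundary analogue of the Riemann-sum approximation \eqref{eq:approxSn} (the paper's \eqref{eq:approxSn2}), positivity of the determinant $\kappa_{0,h_n}\kappa_{2,h_n}-\kappa_{1,h_n}^2$ via the Cauchy--Schwarz-type identity, inversion of the normal equations \eqref{eq:matrix_mult} to produce the equivalent-kernel weights $c_{h_n,i}(t)$, and final combination through the Jackknife definition \eqref{eq:defJackknife}. The only difference is one of detail: where you carry out the boundary bias expansion explicitly (correctly identifying the non-cancellation of $\beta(\sqrt{2}\,\theta)-\beta(\theta)$ as the source of the $h_n^2$ rate), the paper delegates this step to Lemma B.2 of \cite{Dette2019}.
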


\begin{proof}
	By similar arguments as given for  the  approximation in \eqref{eq:approxSn} it follows that
	\begin{equation} \label{eq:approxSn2} \sup_{t\in I_n^c}\sup_{\lambda\in[\zeta,1]}\bigg|\frac{1}{nh_n} S_{n,j}(\lambda,t) - \lambda \kappa_{j,h_n}(t) \bigg| =  \Oc\big(\tfrac{b_n}{nh_n}\big),\end{equation}
	for $j\in\{0,1,2\}$. Note that $S_{n,0}(\lambda,t)S_{n,2}(\lambda,t)-S_{n,1}^2(\lambda,t)>0$ for any $\lambda\in [\zeta,1]$ and almost every $n\in\N$ since,
	by Assumption \ref{assump:kern},
	$$\int_\Ac K(x)\diff x \int_\Ac x^2K(x)\diff x - \bigg(\int_\Ac xK(x)\diff x\bigg)^2
	= \frac{1}{2}\int_{\Ac^2} (x-y)^2K(x)K(y)\diff(x,y),$$
	is positive for any  set $\Ac\subset[-1,1]$ with positive Lebesgue measure. Thus, the Hessian matrix $\mathbf{H}_f$, as defined in \eqref{eq:Hessian_matrix}, is positive definite and the partial derivatives vanish if and only if \eqref{eq:matrix_mult} holds true. Therefore, by \eqref{eq:approxSn2} and similar  arguments as given  in the proof of Lemma B.2 in \cite{Dette2019} we obtain that
	$$\sup_{t\in I_n^c, \lambda\in [\zeta,1]} \Big| \lambda\big(\hat{\mu}_{h_n}(\lambda,t)-\mu(t)\big)-\frac{1}{nh_n}\sum_{i=1}^{\lfloor \lambda n\rfloor}\eps_{T_i,n} c_{h_n,i}(t)K^*_{h_n}(T_i/n-t) \Big|=\mathcal{O}\big(h_n^2+\tfrac{b_n}{nh_n}\big)$$
	Finally the statement
	\eqref{eq:SNGaussianApproxRemainder} follows from the definition of the Jackknife estimator in \eqref{eq:defJackknife}.
	
\end{proof}

%% ====================
\subsection{Proof of Theorem \ref{thm:convSN}} \label{sec:proofs}
%% ====================

The following two Lemmas \ref{lem:fddConv} and \ref{lem:tightness} establish the convergence of the finite dimensional distributions and  equicontinuity of the process
$\{G_n(\lambda )\}_{\lambda \in [\zeta,1]} $ in \eqref{eq:Gn}
(for any $\zeta >0$). The assertion of Theorem \ref{thm:convSN} then follows directly from Theorems 1.5.4 and 1.5.7 of \cite{VanWel96}.

\begin{lemma}\label{lem:fddConv}
	Let Assumptions \ref{assumption:C2}, \ref{assump:kern}, \ref{assumption:LS}, \ref{assump:gn} and \ref{assump:seq} be satisfied and $\zeta \leq \lambda_1 \leq \dots \leq \lambda_p\leq 1$.% Further, let $h_n,m_n,b_n$ be sequences that satisfy Assumption \ref{assumption:sequences}.
	Then,
	$$ \big( G_n(\lambda_1),\dots, G_n(\lambda_p) \big)^\top \convw \big( G(\lambda_1),\dots, G(\lambda_p) \big)^\top  $$
	in $\R^p$, where the Gaussian process $\{G(\lambda)\}_{\lambda \in [\xi,1]}$ is defined in \eqref{hol51}
\end{lemma}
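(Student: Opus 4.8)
The strategy is to linearize $\hat{d}_{2,n}^2(\lambda)-d_0^2$, reduce the problem to a central limit theorem for a weighted partial sum of the \emph{permuted} error sequence, and then read off the covariance of the limit from the block structure of the permutation $T$. Writing $A_n(\lambda,x)=\tilde{\mu}_{h_n}(\lambda,x)-\mu(x)$ and $B_n(\lambda)=\hat{g}_n(\lambda)-g(\mu)$, so that $\hat{d}_n(\lambda,x)-d(x)=A_n(\lambda,x)-B_n(\lambda)$, I would expand the square and obtain
\begin{equation*}
\hat{d}_{2,n}^2(\lambda)-d_0^2 = 2\int_0^1 d(x)\big(A_n(\lambda,x)-B_n(\lambda)\big)\tau(\diff x) + \int_0^1\big(A_n(\lambda,x)-B_n(\lambda)\big)^2\tau(\diff x).
\end{equation*}
The quadratic term vanishes after multiplication by $\lambda\sqrt{n}$: Lemma \ref{thm:GaussianApprox} gives $\int_0^1 A_n(\lambda,x)^2\tau(\diff x)=\Oc_\pr\big((nh_n)^{-1}\big)$ and $\int_0^1 A_n(\lambda,x)\tau(\diff x)=\Oc_\pr(n^{-1/2})$, while Assumption \ref{assump:gn} gives $B_n(\lambda)=\Oc_\pr(n^{-1/2})$; hence $\sqrt{n}\int_0^1(A_n-B_n)^2\tau(\diff x)=\Oc_\pr\big((\sqrt{n}\,h_n)^{-1}\big)=o_\pr(1)$ under Assumption \ref{assump:seq} (the boundary region $I_n^c$ has Lebesgue measure $\Oc(h_n)$ and is handled by Lemma \ref{thm:GaussianApproxRem}).

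For the linear term I would insert the two stochastic expansions. By Lemma \ref{thm:GaussianApprox}, $\int K^*=1$, and the smoothness of $d$ and $f_\tau$, integrating the Jackknife kernel against $d\,f_\tau$ collapses the $A_n$-contribution to $n^{-1}\sum_{i\le\lfloor\lambda n\rfloor}\eps_{T_i,n}d(T_i/n)f_\tau(T_i/n)$ up to an error of order $h_n^3+b_n/(nh_n)$; by Assumption \ref{assump:gn} the $B_n$-contribution collapses to $n^{-1}\sum_{i\le\lfloor\lambda n\rfloor}\eps_{T_i,n}\omega_n(T_i/n)\int_0^1 d(x)\tau(\diff x)$, where \eqref{eq:omega} and $\|\omega_n-\omega\|_4\to0$ permit replacing $\omega_n$ by $\omega$. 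Multiplying by $\lambda\sqrt{n}$ and using Assumption \ref{assump:seq} to kill all remainders, I obtain, for each fixed $\lambda\in[\zeta,1]$,
\begin{equation}\label{eq:plan-lin}
G_n(\lambda)=\frac{2}{\sqrt{n}}\sum_{i=1}^{\lfloor\lambda n\rfloor}\eps_{T_i,n}\,d_\omega(T_i/n)+o_\pr(1),
\end{equation}
with $d_\omega=f_\tau d+\omega\int_0^1 d(x)\tau(\diff x)$ as in Theorem \ref{thm:convSN}.

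Now the permutation is exploited. By the definition of $T$, the first $\lfloor\lambda n\rfloor$ permuted indices are precisely the first $\approx\lambda b_n$ elements of each of the $\ell_n$ consecutive blocks $\{(j-1)b_n+1,\dots,jb_n\}$; hence, up to negligible end effects, the sum in \eqref{eq:plan-lin} equals
\begin{equation*}
\frac{2}{\sqrt{n}}\sum_{j=1}^{\ell_n}\sum_{r=1}^{\lfloor\lambda b_n\rfloor}\eps_{(j-1)b_n+r,\,n}\,d_\omega\big(\tfrac{(j-1)b_n+r}{n}\big).
\end{equation*}
Over a single block $d_\omega$ is essentially constant (equal to $d_\omega(jb_n/n)+o(1)$ by continuity), while the inner sum is a partial sum of a locally stationary stretch of length $\lfloor\lambda b_n\rfloor$. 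I would prove joint asymptotic normality via the Cram\'er--Wold device: for $\zeta\le\lambda_1\le\dots\le\lambda_p\le1$ and reals $c_1,\dots,c_p$, the combination $\sum_k c_k G_n(\lambda_k)$ is by \eqref{eq:plan-lin} a weighted sum of the same block contributions, to which a central limit theorem for locally stationary arrays applies (through the $m$-dependent/martingale approximation afforded by the geometric decay $\delta_4(G,i)=\Oc(\gamma^i)$ in Assumption \ref{assumption:LS} together with the eighth-moment bound).

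Finally I would identify the covariance. For $\lambda\le\lambda'$ the blocks are asymptotically uncorrelated across $j$ (consecutive blocks share only $\Oc(1)$ boundary covariance, whereas each carries variance of order $b_n$, so all cross-block terms contribute $\Oc(\ell_n/n)=o(1)$), and within a block the variance of the length-$\lfloor\lambda b_n\rfloor$ partial sum is $\lfloor\lambda b_n\rfloor\,\sigma^2(jb_n/n)(1+o(1))$ by the definition \eqref{hol6} of the local long-run variance. Summing and invoking a Riemann-sum argument with mesh $b_n/n$, the variance of $G_n(\lambda)$ converges to $4\lambda\|d_\omega\sigma\|_2^2$, and the analogous computation on the overlapping offset ranges yields $\cov\big(G_n(\lambda),G_n(\lambda')\big)\to4\min(\lambda,\lambda')\|d_\omega\sigma\|_2^2$, which is exactly the covariance of $\{2\|d_\omega\sigma\|_2 W(\lambda)\}$ from \eqref{hol51}. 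I expect the main obstacle to be this last step: making rigorous that the permutation turns the partial-sum variance into $\lambda$ times the \emph{integrated} long-run variance, with asymptotically independent increments. This requires simultaneously controlling the time-varying dependence within each block via $\delta_4$ and the Lipschitz continuity of $G$ and $\sigma^2$, and the decoupling of the offset ranges $[\lambda_{k-1}b_n,\lambda_k b_n)$ (which encode the different $\lambda_k$) from the between-block dependence --- precisely what the design of $T$ and the rate conditions $b_n^2/(nh_n)\to0$ and $b_n^3/n\to0$ in Assumption \ref{assump:seq} are there to guarantee.
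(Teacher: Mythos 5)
Your proposal is correct and follows essentially the same route as the paper's proof: linearize the squared distance (quadratic term negligible, linear term dominant), collapse the kernel and benchmark expansions into the single weight $d_\omega = f_\tau d + \omega\int_0^1 d\,\tau(\diff x)$, exploit the block structure of the permutation $T$, prove joint normality via Cram\'er--Wold with an $m$-dependent/blocking approximation under the geometric decay of $\delta_4(G,\cdot)$, and identify the covariance $4(\lambda\wedge\lambda')\|d_\omega\sigma\|_2^2$ by a Riemann-sum argument over blocks. The only differences are cosmetic: the paper bounds the quadratic term uniformly in $\lambda$ via an eighth-moment inequality (needed later for tightness, not for the fdds), treats the degenerate case $d_0=0$ separately, and uses an explicit big-block/small-block decomposition with Lyapunov's CLT where you invoke approximate decorrelation of adjacent blocks.
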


\begin{proof}
	First observe that
	\begin{equation*}%\label{eq:approxD}
	G_n(\lambda)= \lambda\sqrt{n}\big(\|\hat{d}_n(\lambda,\cdot)-d\|_{2,\tau}^2+ 2\langle d,\hat{d}_n(\lambda,\cdot)-d\rangle_\tau\big),
	\end{equation*}
	and note that 
	 \begin{equation}
	 \label{eq:g_hat-g}\hat{g}_n(\lambda)-g(\mu)=\Oc_\pr(n^{-1/2})
	 \end{equation}
	by Assumptions \ref{assumption:LS} and \ref{assump:gn}.
	Recall the definition of the interval $I_n=[h_n,1-h_n]$ and denote $\langle f,g \rangle_{I_n} = \int_{I_n} f(x)g(x)\tau(\diff x)$ and $\|f\|_{2,I_n} = \langle f,f\rangle_{I_n}^{1/2}$, for any $f,g\in L^2([0,1],\tau)$.
	In the following let 
%	Thus, by Corollary \ref{cor:GaussianApproxRem} and Lipschitz continuity of $d$,
	\begin{equation}\label{eq:approxD2}
	\tilde G_n(\lambda)= \lambda\sqrt{n}\big(\|\hat{d}_n(\lambda,\cdot)-d\|_{2,I_n}^2+ 2\langle d,\hat{d}_n(\lambda,\cdot)-d\rangle_{I_n}\big)~,
%	+o_\pr(1).
	\end{equation}
then the assertion follows from the statements
 \begin{align}\label{hol55}
    (\tilde G_n(\lambda_1), \ldots, \tilde G_n(\lambda_p))^\top  &\rightsquigarrow (G(\lambda_1),\ldots,G(\lambda_p))^\top
\\
\label{hol56}
     \sup_{\lambda \in [\zeta,1 ]}  | \tilde G_n(\lambda) - G_n(\lambda) |  &= o_\mathbb{P} (1)
 \end{align}
 For a proof of \eqref{hol55}  note that by the Cramér-Wold device, it is sufficient to prove 
	$$
	 \sum_{i=1}^{p} a_i  \tilde G_n(\lambda_i)\convw \sum_{i=1}^{p} a_i G(\lambda_i)
	 $$ for all  $a_1,\dots,a_p\in\R$.
 Define	
	 $S_k=\sum_{i=1}^{k}\eps_{i,n}$ and $\tilde S_k=\sum_{i=1}^{k}\tilde \eps_{i,n}$ with 	
		\begin{equation}
		\label{hol60} 
		\tilde{\eps}_{i,n}=\ex[\eps_{i,n}|\eta_i,\dots, \eta_{i-m_n}]
		\end{equation}
		 for $m_n\in\N$ and $k=1,\dots,n$.
	By Assumption \ref{assumption:LS} (1), Assumption \ref{assump:seq}
	 and equation (3.2) of \cite{Wu2011} we have
	\begin{equation}
	\label{eq:mDepApprox}
	\big\|\max_{1\leq k\leq n}|\tilde{S}_k-S_k| \big\|_{8,\Omega}=\Oc(n^{1/2}m_n^{-c}),
	\end{equation}
	where
	$m_n = n^\gamma $ with $  \gamma < {\min}\{ \beta/4 , \alpha/2\}$ and the constant $c$  is given by $c=\frac{5}{6\gamma}$. {In particular, the sequences $\frac{m_n^4}{nh_n^4}, \frac{m_n^2}{b_n}$ and $\frac{b_nn^{1/2}}{m_n^c}$ are all  of order $o(1)$ as $n$ tends to infinity.} From Lemma \ref{thm:GaussianApprox} it follows that
	\begin{equation}
	\label{eq:SNGaussianApprox2}
	\sup_{t\in [h_n,1-h_n], \lambda\in [\zeta,1]} \Big| \lambda\big(\tilde{\mu}_{h_n}(\lambda,t)-\mu(t)\big)-\frac{1}{nh_n}\sum_{i=1}^{\lfloor \lambda n\rfloor} \eps_{T_i,n} K^*_{h_n}(T_i/n-t) \Big|=\mathcal{O}\big(h_n^3+\tfrac{b_n}{nh_n}\big).
	\end{equation}	
	Observe that  by Assumption \ref{assump:gn}, \eqref{eq:g_hat-g}, \eqref{eq:mDepApprox} and \eqref{eq:SNGaussianApprox2}
	\begin{eqnarray}\label{hol52} \nonumber
	&&  \lambda\sqrt{n} \|\hat{d}_n(\lambda,\cdot)-d\|_{2,I_n}^2
	= \frac{\sqrt{n}}{\lambda} \bigg\| \bigg( \frac{1}{n} \sum_{j=1}^{\lfloor \lambda n\rfloor}  {\eps}_{T_j,n}\frac{1}{h_n} K_{h_n}^*(T_j/n-\cdot)\bigg) +  \lambda(\hat g_n(\lambda)-g(\mu))\bigg\|_{2,I_n}^2 + o_\pr(1) \\ \nonumber
& \leq & 2 \frac{\sqrt{n}}{\lambda} \bigg\| \frac{1}{n} \sum_{j=1}^{\lfloor \lambda n\rfloor} {\eps}_{T_j,n} \frac{1}{h_n}K_{h_n}^* (T_j/n-\cdot) \bigg\|^2_{2,I_n} + 2 \sqrt{n}\lambda (\hat g_n(\lambda) - g(\mu))^2 + o_\pr(1) \\ 
& = & 2  \frac{\sqrt{n}}{\lambda} \bigg\| \frac{1}{n} \sum_{j=1}^{\lfloor \lambda n\rfloor} {\tilde \eps}_{T_j,n} \frac{1}{h_n} K_{h_n}^* (T_j/n-\cdot) \bigg\|^2_{2,I_n} + o_\pr(1)
	 \end{eqnarray}
	 uniformly with respect to $\lambda\in [\zeta,1]$,
 where the random variables $\tilde \eps_{T_j,n}$ are  defined in \eqref{hol60} and $m_n$-dependent  in the sense that $\tilde  \eps_{T_{j_1},n}$ and $\tilde \eps_{T_{j_2},n}$ are independent  if $|T_{j_1}-T_{j_2}|>m_n$.
	For the estimation of the first term, let $K_{i,j}$ denote the integral $\int_{I_n} K_{h_n}^*(T_i/n-x)K_{h_n}^*(T_j/n-x)\tau(\diff x)$. By the Cauchy-Schwarz inequality and absolute continuity of $\tau$, $K_{i,j}$ can be bounded from above by $C h_n$. Tith implies
		\begin{align}\label{eq:vanishingTerm}
	\begin{split}
	&\ex\bigg[ \sup_{\lambda\in[\zeta, 1]} n^2 \bigg\| \frac{1}{nh_n} \sum_{j=1}^{\lfloor \lambda n\rfloor}\tilde\eps_{T_j,n} K_{h_n}^*(T_j/n-\cdot) \bigg\|_{2,I_n}^8\bigg]\\
	&\leq \sum_{k=1}^{n} \frac{n^2}{n^8h_n^8}\ex\bigg[\bigg\|\sum_{j=1}^{k}\tilde{\eps}_{T_j,n}K_{h_n}^*(T_j/n-\cdot)  \bigg\|_{2,I_n}^8\bigg]\\
	&= \sum_{k=1}^{n} \frac{1}{n^6h_n^8}\sum_{j_1,\dots,j_8=1}^k \ex\bigg[\prod_{r=1}^8\tilde{\eps}_{T_{j_r},n}\bigg] K_{j_1,j_2}K_{j_3,j_4}K_{j_5,j_6}K_{j_7,j_8} = \Oc(\tfrac{m_n^4}{nh_n^4}) ,
	\end{split}
	\end{align}
	where the last estimate follows observing  $\max_{1\leq i\leq n} \ex \eps_{i,n}^8 <\infty$ by Assumption \ref{assumption:LS} (4)
	and the fact that only  $\Oc(n^4m_n^4)$ summands of the inner sum are non-zero due 
	to the $m_n$-dependency of   the random variables $\tilde \eps_{j,n}$. Thus, by \eqref{hol52}, 
\begin{equation} \label{hol69}
\lambda\sqrt{n} \|\hat{d}_n(\lambda,\cdot)-d\|_{2,I_n}^2=o_\pr(1)
\end{equation}
	 uniformly with respect to $\lambda\in [\zeta,1]$.
		If 
		$$
		d_0=  \int_{0}^{1}d^2(x)dx =0,
		$$
		 it follows that	$ G_n(\lambda) = o_\pr(1) $
		and therefore we assume  
		$d_0>0$ in the following discussion. In this case 
		 we have from \eqref{eq:approxD2} and \eqref{eq:SNGaussianApprox2} that
	\begin{equation} \label{eq:CramerWold}
	\sum_{i=1}^{p}a_i \tilde G_n(\lambda_i) = 2 \sum_{i=1}^{p} a_i \lambda_i\sqrt{n}\langle d,\hat{d}_n(\lambda,\cdot)-d\rangle_{I_n} + o_\pr(1)=Z_n + o_\pr(1),
	\end{equation}
	where
	$$
	Z_n=  \frac{2}{\sqrt{n}}  \sum_{i=1}^{p}a_i\sum_{j=1}^{\lfloor \lambda_i n\rfloor}\tilde{\eps}_{T_j,n}\langle d,h_n^{-1} K_{h_n}^*(T_j/n-\cdot)+\omega_n(T_j/n)\rangle_{I_n}.
		$$
	By Lipschitz continuity of $d$ and $\supp(K)=[-1,1]$ it follows that
	$$
	\int_{I_n} d(x)K_{h_n}^*(T_j/n-x) \tau(\diff x)\\ = \Big(d\big(\tfrac{T_j}{n}\big)+\Oc(h_n)\Big)\int_{I_n} K_{h_n}^*(T_j/n-x)\tau(\diff x).
	$$
	We obtain  for any point of continuity $y$ of the  piecewise continuous density $f_\tau$ of the measure $\tau$ that 
	\begin{align*}
	\frac{1}{h_n}\int_{I_n} K_{h_n}^*(y-x)\tau(\diff x)
	&= \frac{1}{h_n}\int_{h_n}^{1-h_n} K_{h_n}^*(y-x)f_\tau(x) \diff x\\
	%&= \frac{1}{h_n}\int_{h_n-y}^{1-h_n-y} K_{h_n}^*(x)f_\tau(x+y) \diff x\\
	&= \int_{1-y/h_n}^{1/h_n-1-y/h_n} K^*(x) f_\tau(xh_n+y)\diff x\\
	& = \left\{\begin{array}{ll}
	f_\tau(y)+o(1),&\text{if}~y\in[2h_n,1-2h_n],\\
	\Oc(1),&\text{else}.
	\end{array} \right.\end{align*}
	Therefore, for $T_j\in \{2nh_n,\dots,n-2nh_n\}$, it holds
	\begin{equation}\label{eq:scalarProdMuK}
	\frac{1}{h_n}\int_{I_n} d(x)K_{h_n}^*(T_j/n-x) \tau(\diff x)
	= f_\tau(T_j/n)d(T_j/n) + o(1),	\end{equation}
	which leads to
	\begin{align} \label{hol62}
	Z_n% &= 2  \sum_{i=1}^{p} a_i \frac{1}{\sqrt{n}}\sum_{j=1}^{\lfloor \lambda_i n\rfloor} \tilde{\eps}_{T_j,n}\Big(f_\tau(T_j/n)d(T_j/n)+\omega_n(T_j/n)\int_0^1 d(x)\tau(\diff x)\Big)+o_\pr(1)\\
	&= \sum_{j=1}^{n}  Y_j +o_\pr(1)
	% \frac{1}{\sqrt{n}}\bigg(\sum_{i=1}^p 2 a_i \id\big(j\le \lfloor\lambda_i n\rfloor\big)\bigg)\tilde{\eps}_{T_j,n}d_{\omega_n}(T_j/n)+o_\pr(1),
	\end{align}
	where the random variables $Y_1,\dots, Y_n$ are defined by  
		$$ Y_j= \frac{2}{\sqrt{n}}\bigg(\sum_{i=1}^p    a_i \id\big(j\leq \lfloor \lambda_i n\rfloor\big)\bigg)\tilde{\eps}_{T_j,n}d_{\omega_n}(T_j/n) \qquad (j=1,\dots,n) $$
	and
	$$
	d_{\omega_n}(T_j/n) = f_\tau(T_j/n)d(T_j/n) + \omega_n(T_j/n) \int_0^1 d(x)\tau(\diff x) .
	$$
	Observe that $Y_1,\dots, Y_n$ centred and $m_n$-dependent   random variables in the sense that $ Y_{j_1}$ and $Y_{j_2}$ are independent  if $|T_{j_1}-T_{j_2}|>m_n$. Define the big blocks $ B_j= \{ k\in\N: (j-1)b_n+1\leq k\leq jb_n-m_n  \}$ and the small blocks $S_j=\{ k\in\N: jb_n-m_n+1\leq k\leq jb_n  \}$, for $j=1\dots, \ell_n$, and the remainder $R=\{ k\in\N: \ell_n b_n+1\leq k \leq n \}$. In the following, we will show that the small blocks and the remainder are negligible and the asymptotic behaviour of $Z_n$ is determined by  the big blocks.
	First observe that $\tilde{\eps}_{T_{k_1},n}\in S_{j_1}$ and $\tilde{\eps}_{T_{k_2},n}\in S_{j_2}$ are independent for $j_1\neq j_2$. Thus,
	\begin{align}\label{eq:smallBlocks}\begin{split}
	&\ex\bigg[\bigg(\sum_{j=1}^{\ell_n}\sum_{k:T_k\in S_j}Y_k\bigg)^2\bigg]\\
	&= \sum_{j=1}^{\ell_n} \sum_{k_1: T_{k_1} \in S_{j}}\sum_{k_2: T_{k_2} \in S_{j}} \ex[ Y_{k_1}Y_{k_2}]\\
	&= \frac{4}{n}\sum_{j=1}^{\ell_n} \sum_{i_1,i_2=1}^{p}  a_{i_1}a_{i_2} \sum_{k_1=1}^{\lfloor \lambda_{i_1}n\rfloor} \sum_{k_2=1}^{\lfloor \lambda_{i_2} n\rfloor}\id(T_{k_1},T_{k_2}\in S_{j})\ex[\tilde{\eps}_{T_{k_1},n}\tilde{\eps}_{T_{k_2},n}]d_{\omega_n}(T_{k_1}/n)d_{\omega_n}(T_{k_2}/n).
	\end{split}
	\end{align}
	Further, $T_k\in S_j$ for some $k\leq \lfloor\lambda n\rfloor$, if and only if $k=r\ell_n+j$ for some $r\geq b_n-m_n $ and $r\leq \lfloor \tfrac{\lambda n-j}{\ell_n}\rfloor$ and we obtain
	\begin{align*}%\label{eq:smallBlocks2}
	\begin{split}
	&\ex\bigg[\bigg(\sum_{j=1}^{\ell_n}\sum_{k:T_k\in S_j}Y_k\bigg)^2\bigg]\\
	&=  \sum_{i_1,i_2=1}^{p}  a_{i_1}a_{i_2} \frac{4}{n} \sum_{j=1}^{\ell_n}\sum_{r_1= b_n-m_n}^{\lfloor \tfrac{\lambda_{i_1}n-j}{\ell_n}\rfloor}\sum_{r_2=b_n-m_n}^{\lfloor \tfrac{\lambda_{i_2}n-j}{\ell_n}\rfloor} d_{\omega_n}\big(\tfrac{(j-1)b_n+r_1+1}{n}\big)  d_{\omega_n}\big(\tfrac{(j-1)b_n+r_2+1}{n}\big)\\ & \hspace{1cm}\times \ex[\tilde{\eps}_{(j-1)b_n+r_1+1,n}\tilde{\eps}_{(j-1)b_n+r_2+1,n}].
	%\frac{1}{n} \sum_{j=1}^{\ell_n} \sum_{i_1,i_2=1}^{p}4 a_{i_1}a_{i_2}\bigg\{\frac{\lfloor \lambda_{i_1}n\rfloor\lfloor \lambda_{i_2}n\rfloor-\lfloor(\lambda_{i_1}\wedge\lambda_{i_2})n\rfloor}{n^2} \sum_{r_1,r_2=1}^n \mu\Big(\frac{r_1}{n}\Big)\mu\Big(\frac{r_2}{n}\Big) \\
	%&\hspace{1.5cm}\times \ex\big[\tilde{\eps}_{r_1,n}\tilde{\eps}_{r_2,n}\id\big(r_1,r_2\in S_j\big)\big]+\frac{\lfloor (\lambda_{i_1}\wedge \lambda_{i_2})n\rfloor}{n} \sum_{r=1}^n \mu^2\Big(\frac{r}{n}\Big) \ex\big[\tilde{\eps}_{r,n}^2\id\big(r\in S_j\big)\big] \bigg\}\\
	%&= \Oc(m_n^2/b_n),
	\end{split}
	\end{align*}	
	For $\lambda < 1$, it holds that $\lambda \tfrac{n}{b_n\ell_n}\to \lambda$ and $1-\tfrac{m_n}{b_n}\to 1$, so for almost every $n\in\N$, $b_n-m_n\geq  \lfloor \tfrac{\lambda n-j}{\ell_n}\rfloor$. Thus, if $\lambda_{i_1}<1$ or $\lambda_{i_2}<1$, the sums indexed by $r_1$ and $r_2$ on the right-hand side of the previous display are empty sums for almost every $n\in\N$. For $\lambda=1$, there are $m_n$ summands in both sums, thus, the right-hand side of the previous display is of order $\Oc(m_n^2/b_n)$ which vanishes by assumption. Thus the small blocks are asymptotically negligible, and analogously,
	$$ \ex\bigg[\bigg(\sum_{k\in \bar{R}} Y_k\bigg)^2\bigg]=\Oc\Big(\frac{b_n^2}{n}\Big). $$
\\	
	The sums over the big blocks are independent, and we have analogously to \eqref{eq:smallBlocks},
	$$\sum_{j=1}^{\ell_n}\ex\bigg[\bigg(\sum_{k:T_k\in B_j}Y_k\bigg)^2\bigg]
	= \sum_{i_1,i_2=1}^{p}  a_{i_1}a_{i_2} \frac{4}{n} \sum_{j=1}^{\ell_n}\sum_{r_1\in \bar B_{i_1,j}}\sum_{r_2\in \bar B_{i_2,j}}d_{\omega_n}\big(\tfrac{r_1}{n}\big)  d_{\omega_n}\big(\tfrac{r_2}{n}\big) \ex[\tilde{\eps}_{r_1,n}\tilde{\eps}_{r_2,n}]$$
	%= \frac{1}{n} \sum_{j=1}^{\ell_n} \sum_{i_1,i_2=1}^{p}4 a_{i_1}a_{i_2} \lambda_{i_1} \lambda_{i_2}\bigg\{\frac{\lfloor \lambda_{i_1}n\rfloor\lfloor \lambda_{i_2}n\rfloor-\lfloor (\lambda_{i_1}\wedge\lambda_{i_2})n\rfloor}{n^2}
	%\\ \times  \sum_{r_1,r_2\in B_j} \mu\Big(\frac{r_1}{n}\Big)\mu\Big(\frac{r_2}{n}\Big) \ex\big[\tilde{\eps}_{r_1,n}\tilde{\eps}_{r_2,n}\big]
	%+\frac{\lfloor (\lambda_{i_1}\wedge \lambda_{i_2})n\rfloor}{n} \sum_{r\in B_j} \mu^2\Big(\frac{r}{n}\Big) \ex\big[\tilde{\eps}_{r,n}^2\big] \bigg\}
	where $$\bar B_{i,j} = \{(j-1)b_n+1, \dots, (j-1)b_n+1+\lfloor \tfrac{\lambda_{i}n-j}{\ell_n}\rfloor \wedge (b_n-m_n-1) \},$$
	for $\lambda_i\in [\zeta,1]$. Note that, for almost every $n\in\N$, $\bar B_{i,j} = \{(j-1)b_n+1, \dots, (j-1)b_n+1+\lfloor \tfrac{\lambda_{i}n-j}{\ell_n}\rfloor \}$, if $\lambda<1$ and $\bar B_{i,j} = \{(j-1)b_n+1, \dots, jb_n-m_n \}$, if $\lambda=1$. By Lipschitz continuity of $d$ and Assumption \ref{assump:gn},
	\begin{equation}\label{eq:lemmaFDDconv1}
	\sum_{j=1}^{\ell_n}\ex\bigg[\bigg(\sum_{k:T_k\in B_j}Y_k\bigg)^2\bigg]
	=  \sum_{i_1,i_2=1}^{p}  a_{i_1}a_{i_2}  \frac{4}{n} \sum_{j=1}^{\ell_n} d_{\omega_n}^2\big(\tfrac{jb_n}{n}\big) \sum_{r_1\in \bar B_{i_1,j}}\sum_{r_2\in \bar B_{i_2,j}} \ex[\tilde{\eps}_{r_1,n}\tilde{\eps}_{r_2,n}]+\Oc\big(\tfrac{b_n^2}{nh_n}\big).\\
	%= \frac{1}{n} \sum_{j=1}^{\ell_n} \sum_{i_1,i_2=1}^{p}4 a_{i_1}a_{i_2} \lambda_{i_1} \lambda_{i_2}\mu^2\bigg(\frac{j(b_n+m_n)}{n}\bigg)
	%\\ \times  \bigg\{\frac{\lfloor \lambda_{i_1}n\rfloor\lfloor \lambda_{i_2}n\rfloor-\lfloor (\lambda_{i_1}\wedge\lambda_{i_2})n\rfloor}{n^2}\sum_{r_1,r_2\in B_j} \ex\big[\tilde{\eps}_{r_1,n}\tilde{\eps}_{r_2,n}\big]
	%+\frac{\lfloor (\lambda_{i_1}\wedge \lambda_{i_2})n\rfloor}{n} \sum_{r\in B_j}  \ex\big[\tilde{\eps}_{r,n}^2\big] \bigg\}+\Oc\Big(\frac{b_n^2}{n}\Big).
	\end{equation}		
	Now, by \eqref{eq:mDepApprox},
	\begin{equation}\label{eq:mDepCov}
	\max_{1\le  r_1, r_2\le n}\big|\ex[\tilde{\eps}_{ r_1,n}\tilde{\eps}_{ r_2,n}]-\ex[\eps_{ r_1,n}\eps_{ r_2,n}]\big|=\Oc(n^{1/2}m_n^{-c}).
	\end{equation}
	Applying Assumption \ref{assumption:LS} (2), yields $$\ex[\eps_{ r_1,n}\eps_{ r_2,n}] = \ex\big[G\big(\tfrac{jb_n}{n},\Fc_{ r_1}\big)G\big(\tfrac{jb_n}{n},\Fc_{ r_2}\big)\big]+\Oc(b_n/n),$$
	for any $ r_1 \in \bar B_{i_1,j}, r_2\in \bar B_{i_2,j}$. By the same arguments as in the proof of Theorem 1 in \cite{Wu2009} and Assumption \ref{assumption:LS} (1), it follows that
	\begin{equation*}%\label{eq:boundCov}
	\ex\big[G\big(\tfrac{i}{n},\Fc_{ r_1}\big)G\big(\tfrac{i}{n},\Fc_{ r_2}\big)\big]=\Oc(\gamma^{| r_2- r_1|}),
	\end{equation*}
	for any $1\leq i \leq n$, in particular $i=jb_n$. Let $ b := |\bar B_{i_1,j}\cap \bar B_{i_2,j}|$, then, 
	\begin{align}\label{eq:lrvApprox}
	\begin{split}
	\sum_{ r_1\in \bar B_{i_1,j}}\sum_{ r_2\in \bar B_{i_2,j}}\ex[\eps_{ r_1,n}\eps_{ r_2,n}]
	&=  b \sum_{k=- b}^{ b} \big(1-\tfrac{|k|}{ b}\big) \ex\big[G\big(\tfrac{jb_n}{n},\Fc_{0}\big)G\big(\tfrac{jb_n}{n},\Fc_{k}\big)\big]+ \Oc(b_n^3/n+b_n\gamma^{b_n}+1)\\
	&= (\lambda_{i_1}\wedge \lambda_{i_2})b_n \sigma^2\bigg(\frac{jb_n}{n}\bigg) + \Oc(b_n^3/n+b_n\gamma^{b_n}+1).
	\end{split}
	\end{align}
	Thus, by \eqref{eq:mDepCov},
	\begin{multline*}
	\sum_{ r_1\in \bar B_{i_1,j}}\sum_{ r_2\in \bar B_{i_2,j}}\ex[\tilde \eps_{ r_1,n}\tilde \eps_{ r_2,n}]=  \big(\lambda_{i_1}\wedge \lambda_{i_2}\big)b_n \sigma^2\bigg(\frac{jb_n}{n}\bigg)+\Oc\bigg(\frac{b_n^3}{n}+b_n\gamma^{b_n}+1+\frac{b_n^2n^{1/2}}{m_n^{c}}\bigg).
	\end{multline*}
Plugging this into \eqref{eq:lemmaFDDconv1} and observing $\ell_n=\lfloor n/b_n\rfloor$  leads to
	\begin{align}\label{eq:Riemann_conv}
	\sum_{j=1}^{\ell_n}\ex\bigg[\bigg(\sum_{k:T_k\in B_j}Y_k\bigg)^2\bigg] & =    \sum_{i_1,i_2=1}^{p} 4a_{i_1}a_{i_2}(\lambda_{i_1}\wedge \lambda_{i_2})\|d_\omega\sigma\|_{2}^2 +  o(1)
	\\
	& =\var\bigg(\sum_{i=1}^{p}a_iG(\lambda_i)\bigg) + o(1). \nonumber 
%	= \sum_{i_1,i_2=1}^{p} 4a_{i_1}a_{i_2}(\lambda_{i_1}\wedge \lambda_{i_2}) \frac{b_n}{n} \sum_{j=1}^{\ell_n} d_{\omega_n}^2\big(\tfrac{jb_n}{n}\big) \sigma^2\big(\tfrac{jb_n}{n}\big)+\Oc\bigg(\frac{b_n^2}{nh_n}+\gamma^{b_n}wol+\frac{b_nn^{1/2}}{m_n^{c}}\bigg),
	\end{align}
	Finally, observe that by Jensen's inequality and Assumption \ref{assumption:LS} (4), for some constant $C>0$,
	$$
	\sum_{j=1}^{\ell_n} \ex\bigg[\bigg(\sum_{k: T_k\in B_j} Y_k\bigg)^4\bigg]
	\leq \sum_{j=1}^{\ell_n} b_n^3\ex\bigg[\sum_{k: T_k\in B_j} Y_k^4\bigg]
	\leq C\frac{b_n^3}{n}\max_{k=1}^n \ex\, \tilde{\eps}_{T_k,n}^4
	= \Oc(b_n^3/n).
	$$
	By Lyapunov's central limit theorem, it follows that
	$$ Z_n \convw \Nc\bigg(0, \var\bigg(\sum_{i=1}^{p}a_iG(\lambda_i)\bigg)\bigg)\stackrel{\Dc}{=}\sum_{i=1}^{p}a_i G(\lambda_i) $$
	and the   statement  \eqref{hol55}  is a consequence of \eqref{eq:CramerWold}, \eqref{hol62} and   the Cramér-Wold device.
\\	
	For the proof of the  remaining statement \eqref{hol56} we note that this assertion is a consequence of the estimate
	\begin{equation} \label{hol57}
\sup_{\lambda\in[\zeta,1]} \int_{I_n^c} \lambda \sqrt{n} \big(\tilde{\mu}_{h_n}(\lambda,t)-\mu(t)\big)^j\diff t   = o_\pr(1) \qquad j\in\{1,2\}.
\end{equation}
To prove this statement, we note that by Lemma \ref{thm:GaussianApproxRem} 
	\begin{multline*}%\label{eq:cor_rem}
	\sup_{\lambda\in[\zeta,1]} \int_{I_n^c} \lambda \sqrt{n} \big(\tilde{\mu}_{h_n}(\lambda,t)-\mu(t)\big)^j\diff t   =\\ \sup_{\lambda\in[\zeta,1]} \int_{I_n^c}  \frac{1}{\lambda^{j-1}}\sqrt{n}\bigg(\frac{1}{nh_n}\sum_{i=1}^{\lfloor \lambda n\rfloor}\eps_{T_i,n}\Big( 2c_{\frac{h_n}{\sqrt{2}},i}(t)K_{\frac{h_n}{\sqrt{2}}}\big(\tfrac{T_i}{n}-t\big)-c_{h_n,i}(t)K_{h_n}\big(\tfrac{T_i}{n}-t\big)\Big) \bigg)^j\diff t  + o(1),\end{multline*}
	for $j\in\{1,2\}$. The case $j=2$ follows by similar arguments as given in   \eqref{eq:vanishingTerm}. For the case $j=1$ recall from the previous discussion    that the random variables $\eps_{i,n}$ can be approximated by $m_n$-dependent random variables $\tilde{\eps}_{i,n}$. Thus,
	\begin{align}\label{eq:cor_rem}
	\begin{split}
	&\sup_{\lambda\in[\zeta,1]} \int_{I_n^c} \lambda \sqrt{n} \big(\tilde{\mu}_{h_n}(\lambda,t)-\mu(t)\big)\diff t   \\
	&=\sup_{\lambda\in[\zeta,1]} \int_{I_n^c} \frac{1}{\sqrt{n}h_n}\sum_{i=1}^{\lfloor \lambda n\rfloor}\tilde\eps_{T_i,n}\Big( 2c_{\frac{h_n}{\sqrt{2}},i}(t)K_{\frac{h_n}{\sqrt{2}}}\big(\tfrac{T_i}{n}-t\big)-c_{h_n,i}(t)K_{h_n}\big(\tfrac{T_i}{n}-t\big)\Big)\diff t  + o_\pr(1)\\
	&= \sup_{\lambda\in[\zeta,1]} \frac{1}{\sqrt{n}}\sum_{i=1}^{\lfloor \lambda n\rfloor}\tilde\eps_{T_i,n}  \int_{I_n^c} \frac{1}{h_n} \Big(2c_{\frac{h_n}{\sqrt{2}},i}(t)K_{\frac{h_n}{\sqrt{2}}}\big(\tfrac{T_i}{n}-t\big)-c_{h_n,i}(t)K_{h_n}\big(\tfrac{T_i}{n}-t\big)\Big)\diff t  + o_\pr(1)\\
	&= \sup_{\lambda\in[\zeta,1]} \frac{1}{\sqrt{n}}\sum_{j\in B}\sum_{i=1}^{\lfloor \lambda b_n\rfloor}\tilde\eps_{(j-1)b_n+i,n} \\
	&\hspace{1cm}\times \int_{I_n^c} \frac{1}{h_n} \Big( 2c_{\frac{h_n}{\sqrt{2}},i}(t)K_{\frac{h_n}{\sqrt{2}}}\big(\tfrac{(j-1)b_n+i}{n}-t\big)-c_{h_n,i}(t)K_{h_n}\big(\tfrac{(j-1)b_n+i}{n}-t\big)\Big)\diff t  + o_\pr(1),
	\end{split}\end{align}
	where $B$ denotes the set $\{ 1,\dots, \lfloor 2\ell_nh_n\rfloor \}\cup \{ \lfloor \ell_n(1-2h_n)\rfloor,\dots, \ell_n \}$. Note that the integral on the right-hand side of  \eqref{eq:cor_rem} is bounded and by 
	similar  arguments as used in the proof of \eqref{eq:vanishingTerm}, the right-hand side of \eqref{eq:cor_rem} is of order $\Oc(b_nh_n^4m_n^4)$, which converges to $0$ by the definition of $m_n$ and Assumption \ref{assump:seq}.

	\medskip
	
	Therefore \eqref{hol57} follows and the proof of Lemma \ref{lem:fddConv} is completed.
\end{proof}

	\medskip

\begin{lemma}\label{lem:tightness}
	Let Assumptions \ref{assumption:C2}, \ref{assump:kern}, \ref{assumption:LS}, \ref{assump:gn} and \ref{assump:seq} be satisfied. Then,
	$$ \lim\limits_{\rho \searrow 0}\lim\limits_{n\to\infty}\pr(\sup_{|\lambda_1-\lambda_2|\leq \rho} |G_n(\lambda_1)-G_n(\lambda_2)|>\eps) =0, $$ 
	for any $\eps>0$.
\end{lemma}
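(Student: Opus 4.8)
The plan is to reduce the equicontinuity statement for $\{G_n(\lambda)\}$ to the corresponding statement for a genuine partial-sum process and then to verify the latter through a fourth-moment bound on its increments. All the uniform approximations already established in the proof of Lemma~\ref{lem:fddConv} --- the passage from $G_n$ to $\tilde G_n$ in \eqref{hol56}, the negligibility of the quadratic term \eqref{hol69}, the linearisation \eqref{eq:CramerWold}--\eqref{hol62}, and the boundary estimate \eqref{hol57} --- hold uniformly with respect to $\lambda \in [\zeta,1]$. Combining them yields
\[
\sup_{\lambda \in [\zeta,1]} \big| G_n(\lambda) - Z_n(\lambda) \big| = o_\pr(1), \qquad Z_n(\lambda) = \frac{2}{\sqrt{n}} \sum_{j=1}^{\lfloor \lambda n\rfloor} \tilde{\eps}_{T_j,n}\, d_{\omega_n}(T_j/n),
\]
where $\tilde{\eps}_{i,n}$ is the $m_n$-dependent approximation from \eqref{hol60} and $d_{\omega_n}$ is the (uniformly bounded) weight appearing in the proof of Lemma~\ref{lem:fddConv}. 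Since a uniformly $o_\pr(1)$ term is automatically asymptotically equicontinuous, it suffices to establish the claimed modulus-of-continuity bound for the partial-sum process $Z_n$.

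For the second step I would control the fourth moment of the increments of $Z_n$. Fix $\zeta \le \lambda_1 < \lambda_2 \le 1$, set $A = \{\lfloor \lambda_1 n\rfloor +1, \ldots, \lfloor \lambda_2 n\rfloor\}$ and $N = |A|$, and expand
\[
\ex\big| Z_n(\lambda_2)-Z_n(\lambda_1)\big|^4 = \frac{16}{n^2} \sum_{j_1,j_2,j_3,j_4 \in A} \ex\Big[\textstyle\prod_{r=1}^4 \tilde{\eps}_{T_{j_r},n}\Big]\, \prod_{r=1}^4 d_{\omega_n}(T_{j_r}/n).
\]
The weights $d_{\omega_n}$ are bounded, the summands are centred, and since $T$ is a bijection with $b_n \gg m_n$, two permuted entries $\tilde{\eps}_{T_{j},n}, \tilde{\eps}_{T_{j'},n}$ are dependent only when their original positions satisfy $|T_j - T_{j'}| \le m_n$; the corresponding covariances and fourth cumulants decay geometrically by Assumption~\ref{assumption:LS}~(1) (as in \eqref{eq:lrvApprox}) and are bounded by Assumption~\ref{assumption:LS}~(4). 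Decomposing the joint fourth moment into the three covariance pairings and the fourth cumulant, each pairing term factorises into two sums of the type $\sum_{j,j' \in A}|\cov(\tilde{\eps}_{T_j,n},\tilde{\eps}_{T_{j'},n})|$; because the permutation sends distinct indices to distinct times, summing the geometric tail over $j'$ for fixed $j$ gives $\Oc(1)$, so each such sum is $\Oc(N)$ and the pairing contribution is $\Oc(N^2)$. The fully clustered fourth-cumulant term contributes only $\Oc(N)$ by summability of the cumulants. Hence
\[
\ex\big| Z_n(\lambda_2)-Z_n(\lambda_1)\big|^4 = \Oc\big((N/n)^2\big) = \Oc\big((\lambda_2-\lambda_1)^2\big), \qquad \lambda_2-\lambda_1 \ge 1/n.
\]

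In the third step I would upgrade this increment bound to the uniform statement. Since the control function $F(\lambda)=\lambda$ is continuous and the exponent $2$ exceeds $1$, a standard maximal inequality for partial sums (of Móricz/Billingsley type, passing from increment moment bounds to the modulus of continuity) gives, for every $\eps>0$,
\[
\limsup_{n\to\infty}\, \pr\Big( \sup_{|\lambda_1-\lambda_2|\le \rho} |Z_n(\lambda_1)-Z_n(\lambda_2)| > \eps \Big) \le \frac{C\rho}{\eps^4} \xrightarrow{\ \rho \searrow 0\ } 0;
\]
increments with $\lambda_2-\lambda_1 < 1/n$ are harmless because the piecewise-constant process $Z_n$ has individual jumps of order $\Oc_\pr(n^{-1/2})$, which vanish uniformly. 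Together with the reduction of the first step, this proves the lemma. I expect the main obstacle to lie in the combinatorial bookkeeping of the fourth-moment expansion: the estimate must come out as $\Oc((\lambda_2-\lambda_1)^2)$ without a spurious factor $m_n$, which forces one to combine the geometric decay of Assumption~\ref{assumption:LS}~(1) (so that, for each fixed index, the sum over its dependent partners is $\Oc(1)$ rather than $\Oc(m_n)$) with the permutation identity $T_{i\ell_n+j}=(j-1)b_n+i+1$ in order to translate closeness in the permuted index into closeness in the original time index.
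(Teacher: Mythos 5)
Your proposal is correct, and its overall architecture coincides with the paper's proof: reduce $G_n$, uniformly in $\lambda$, to the weighted partial-sum process $Z_n(\lambda)=\tfrac{2}{\sqrt n}\sum_{j\le\lfloor\lambda n\rfloor}\tilde\eps_{T_j,n}d_{\omega_n}(T_j/n)$ of $m_n$-dependent variables (this is exactly what the paper does via \eqref{hol56}, \eqref{hol69}, \eqref{hol57} and the expansion leading to \eqref{eq:lemTightness2}), then prove the increment bound $\ex|Z_n(\lambda_2)-Z_n(\lambda_1)|^4=\Oc((\lambda_2-\lambda_1)^2)$, and finally convert it into the modulus-of-continuity statement by a maximal inequality. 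Where you differ is in the execution of the last two steps, and both of your variants work. (a) For the fourth moment, the paper does not use a cumulant expansion with geometric decay: it groups the indices of the increment into the original-time blocks $\tilde B_j$ in \eqref{eq:lemTightness}, exploits that sums over distinct blocks are \emph{exactly} independent (their time separation exceeds $m_n$), and then needs only covariance decay (as in \eqref{eq:lrvApprox}) for the cross-block pairing term and crude $m_n$-counting for the within-block clustered term; the potentially harmful factor $m_n^2$ there is absorbed because it appears as $m_n^2/\ell_n=m_n^2b_n/n\to 0$. Your cumulant route dispenses with the block bookkeeping and uses only bijectivity of $T$, which is arguably cleaner and more portable, but it requires summability of fourth-order joint cumulants of the $\tilde\eps_{i,n}$; this does follow from Assumption \ref{assumption:LS} (1) and (4) by standard physical-dependence arguments, but it is an additional lemma you should state explicitly, and it is precisely what the paper's block decomposition is designed to avoid. (b) For the maximal step you invoke a M\'oricz/Billingsley-type inequality on the grid together with control of the largest jump (correct: bounded eighth moments give a maximal jump of order $\Oc_\pr(n^{-3/8})$, so sub-grid increments are harmless), whereas the paper applies the chaining bound of Theorem 2.2.4 of \cite{VanWel96} with the packing numbers of $([0,1],|\cdot|^{1/2})$, which handles the continuous index set directly; both yield a bound of the form $C\rho/\eps^4$ (respectively $C\eta^2/\eps^4$ for arbitrary $\eta>0$) and hence the claim.
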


\begin{proof} 
	By \eqref{hol56}, it follows that
		\begin{align*}
	G_n(\lambda_1)-G_n(\lambda_2)
	&=  \tilde 	G_n(\lambda_1)- \tilde  G_n(\lambda_2)
	%\sqrt{n}\Big\{ \lambda_1\|\hat{d}_n(\lambda_1,\cdot)-d\|_{2,I_n}^2 + 2\lambda_1 \langle \hat{d}_n(\lambda_1,\cdot)-d,d \rangle_{I_n} \\&\hspace{1cm}-\lambda_2 \|\hat{d}_n(\lambda_2,\cdot)-d\|_{2,I_n}^2 - 2\lambda_2\langle \hat{d}_n(\lambda_2,\cdot)-d,d\rangle_{I_n} \Big\}
	+o_\pr(1).
	\end{align*}
	uniformly with respect to $\lambda \in [\zeta, 1]$, where $\tilde 	G_n(\lambda )$ is defined in 
	\eqref{eq:approxD2}. Therefore the assertion of the Lemma follows from
	\begin{equation} \label{hol68} 
	 \lim\limits_{\rho \searrow 0}\lim\limits_{n\to\infty}\pr(\sup_{|\lambda_1-\lambda_2|\leq \rho} | \tilde G_n(\lambda_1)- \tilde G_n(\lambda_2)|>\eps) =0, 
	\end{equation}
To prove this statement note that we obtain  from \eqref{hol69} 
	$$
 \tilde 	G_n(\lambda_1)- \tilde G_n(\lambda_2)
	=  2\sqrt{n}\langle \lambda_1\big(\hat{d}_n(\lambda_1,\cdot)-d\big)-\lambda_2\big(\hat{d}_n(\lambda_2,\cdot)-d\big),d \rangle_{I_n}+o_\pr(1)
	$$
	uniformly with respect to   $\lambda_1, \lambda_2 \in [\zeta, 1]$.
	By Lemma \ref{thm:GaussianApprox},  Assumption \ref{assump:gn} and  \eqref{eq:scalarProdMuK} we have the expansion
	\begin{align*}
	 \tilde G_n(\lambda_1)- \tilde  G_n(\lambda_2)
	& =  \frac{2}{\sqrt{n}}\sum_{i=\lfloor (\lambda_1\wedge\lambda_2)n\rfloor+1}^{\lfloor (\lambda_1\vee\lambda_2)n\rfloor} \eps_{T_i,n}\bigg\langle \frac{1}{h_n}K_{h_n}^*\bigg(\frac{T_i}{n}-\cdot\bigg)+\omega_n\bigg(\frac{T_i}{n}\bigg),d \bigg\rangle_{I_n}
	+o_\pr(1) \\
	&=  \frac{2}{\sqrt{n}}\sum_{i=\lfloor (\lambda_1\wedge\lambda_2)n\rfloor+1}^{\lfloor (\lambda_1\vee\lambda_2)n\rfloor} \eps_{T_i,n} d_{\omega_n}(T_i/n)	+o_\pr(1),
	\end{align*}
	uniformly in $\lambda_1, \lambda_2\in [\zeta, 1] $, 
	where  $d_{\omega_n}$ is  defined in the proof of Lemma \ref{lem:fddConv}.	
	Further, recalling the definition of   $\tilde{\eps}_{i,n}$  in  \eqref{hol60}, it follows 
	 by \eqref{eq:mDepApprox}, that 
	$$
	\tilde 
	G_n(\lambda_1)- \tilde 
	G_n(\lambda_2)
	=  \frac{2}{\sqrt{n}}\sum_{i=\lfloor (\lambda_1\wedge\lambda_2)n\rfloor+1}^{\lfloor (\lambda_1\vee\lambda_2)n\rfloor} \tilde \eps_{T_i,n}d_{\omega_n}(T_i/n)+o_\pr(1)$$
	uniformly in $\lambda_1, \lambda_2\in [\zeta, 1] $.	In particular, we obtain 
	\begin{multline}\label{eq:lemTightness2}
	\pr\Big(\sup_{|\lambda_1-\lambda_2|\leq \rho} | \tilde  G_n(\lambda_1)- \tilde G_n(\lambda_2)|>\eps \Big)\\
	= \pr\bigg(\sup_{|\lambda_1-\lambda_2|\leq \rho} \bigg|\frac{2}{\sqrt{n}}\sum_{i=\lfloor (\lambda_1\wedge\lambda_2)n\rfloor+1}^{\lfloor (\lambda_1\vee\lambda_2)n\rfloor} \tilde \eps_{T_i,n}d_{\omega_n}(T_i/n) \bigg| >\eps \bigg)+o(1).
	\end{multline}

	Now, for some $\zeta \leq \lambda_1\leq \lambda_2 \leq 1 $,  define  the sets $\tilde{B}_j=\tilde{B}_j(\lambda_1,\lambda_2)$ by  
	\begin{multline*}
	\bigg\{i\in \N : (j-1)b_n + \Big\lfloor \frac{\lambda_1 n}{\ell_n}\Big\rfloor + \id(j < \lfloor \lambda_1 n\rfloor\mod \ell_n)+1\\\leq i \leq (j-1)b_n+ \Big\lfloor \frac{\lambda_2 n}{\ell_n}\Big\rfloor - \id(\lfloor \lambda_2 n\rfloor\mod\ell_n <j) +1 \bigg\},\end{multline*}
	for $j=1,\dots,\ell_n$. In particular, $|\tilde{B}_j| \leq \lfloor \tfrac{\lambda_2n}{\ell_n}\rfloor - \lfloor \tfrac{\lambda_1n}{\ell_n}\rfloor+1\leq \lfloor \tfrac{|\lambda_2-\lambda_1|n}{\ell_n}\rfloor+2$. With this notation, it holds
	\begin{equation*} %\label{eq:lemTightness3}
	R_n=
	\ex\bigg[\bigg|\frac{1}{\sqrt{n}}\sum_{i=\lfloor (\lambda_1\wedge\lambda_2)n\rfloor+1}^{\lfloor (\lambda_1\vee\lambda_2)n\rfloor} \tilde \eps_{T_i,n}d_{\omega_n}(T_i/n) \bigg|^4\bigg] = \frac{1}{n^2} \ex\bigg[\bigg| \sum_{j=1}^{\ell_n}\sum_{i \in \tilde B_j} \tilde{\eps}_{i,n}d_{\omega_n}\big(i/n\big)\bigg|^4 \bigg].
	\end{equation*}
	Observe that the distance between two blocks $\tilde{B}_j$ and $\tilde{B}_{j-1}$ is larger than 
	$b_n - \Big\lfloor \frac{(\lambda_2-\lambda_1) n}{\ell_n}\Big\rfloor > m_n$. Thus, the sums over these blocks are independent and we obtain the
	representation
	\begin{align}\label{eq:lemTightness}
	\begin{split}
R_n=	&\frac{1}{n^2} \sum_{j=1}^{\ell_n} \sum_{i_1,i_2,i_3,i_4 \in \tilde B_j} \bigg(\prod_{r=1}^4 d_{\omega_n}\big(i_r/n\big)\bigg) \ex\Big[\prod_{r=1}^4 \tilde{\eps}_{i_r,n} \Big]\\
	&+\frac{3}{n^2} \sum_{\substack{j_1,j_2=1\\j_1\neq j_2}}^{\ell_n} \sum_{i_1,i_2 \in \tilde B_{j_1}}\sum_{i_3,i_4\in \tilde{B}_{j_2}} \bigg(\prod_{r=1}^4 d_{\omega_n}\big(i_r/n\big)\bigg) \ex[\tilde{\eps}_{i_1,n} \tilde{\eps}_{i_2,n}]\ex[\tilde{\eps}_{i_3,n} \tilde{\eps}_{i_4,n}]. 
	\end{split}
	\end{align}
	We first consider the first term of \eqref{eq:lemTightness}. Recall that the random variables $\tilde{\eps}_{i,n}$ are $m_n$-dependent and that $|\tilde{B}_j|\leq\lfloor \tfrac{|\lambda_1-\lambda_2|n}{\ell_n}\rfloor+2$. Therefore, the number of non-zero summands in the inner sum can be bounded from above by $\big(\lfloor \tfrac{|\lambda_1-\lambda_2|n}{\ell_n}\rfloor+2\big)^2 m_n^2$. By Assumption \ref{assumption:LS} (4), Assumption \ref{assump:gn} and Lipschitz continuity of $d$, the first term in \eqref{eq:lemTightness} can be bounded from above by $C|\lambda_1-\lambda_2|^2 \tfrac{m_n^2}{\ell_n}\leq C|\lambda_1-\lambda_2|^2$.
	
	The second term of \eqref{eq:lemTightness} is  bounded  by
	$$ 
	S_n = C\bigg(\frac{1}{n}\sum_{j=1}^{\ell_n}  \sum_{i_1,i_2 \in \tilde B_j} d_{\omega_n}\big(i_1/n\big)d_{\omega_n}\big(i_2/n\big) \ex[\tilde{\eps}_{i_1,n} \tilde{\eps}_{i_2,n}]\bigg)^2
	$$
	for some constant $C$. 
	The inner sum in this term can be rewritten as $\tfrac{|\lambda_1-\lambda_2|n}{\ell_n} d_{\omega_n}^2\big(\tfrac{jb_n}{n}\big)\sigma^2\big(\tfrac{jb_n}{n}\big)+o(1)$, analogously to \eqref{eq:lrvApprox}. Thus, $S_n$  converges to
	$ C|\lambda_1-\lambda_2|^2 \|d_\omega\sigma\|_2^4$. Combining these arguments we obtain  
$$
R_n \leq C |\lambda_1-\lambda_2|^2.
$$ 	
	Therefore, by Theorem 2.2.4 of \cite{VanWel96} it follows that
	\begin{multline*}
	\ex\bigg[\sup_{|\lambda_1-\lambda_2|\leq \rho} \bigg|\frac{1}{\sqrt{n}}\sum_{i=\lfloor (\lambda_1\wedge\lambda_2)n\rfloor+1}^{\lfloor (\lambda_1\vee\lambda_2)n\rfloor} \tilde \eps_{T_i,n}d_{\omega_n}(T_j/n) \bigg|^4\bigg]\\
	\leq K^4\bigg\{ \int_0^\eta D^{1/4}(\eps)\diff \eps + \rho^{1/2} D^{1/2}(\eta) \bigg\}^4
	\leq K^4\Big(2\eta^{1/2}+\frac{\rho^{1/2}}{\eta}\Big)^4,
	\end{multline*}
	for some constant $K$ and any $\eta>0$, where $D(\eps)$ denotes the packing number of the space $([0,1],|\cdot|^{1/2})$ and can be bounded from above by $\eps^{-2}$. Thus, by \eqref{eq:lemTightness2} and Markov's inequality, 
	$$
	\lim_{\rho\searrow 0}\lim\limits_{n\to\infty}\pr(\sup_{|\lambda_1-\lambda_2|\leq \rho} | \tilde  G_n(\lambda_1)- \tilde 
	G_n(\lambda_2)|>\eps) \leq 16\tfrac{K^4}{\eps^4}\eta^2,
	$$ 
	for any $\eta>0$, which proves \eqref{hol68} and completes the proof of  the lemma. 
\end{proof}

%% ====================
\subsection{Proof of the statements in Remark  \ref{rem:gn}} \label{sec:rem}
%% ====================
 
Part (i) is  obvious. 
Part (ii) of  the statement follows  with $g(\mu)=c$  and $\omega \equiv 0$.
	For a proof of part (iii) note that	
	\begin{align*}
	&\lambda \bigg(\hat{g}_n(\lambda)-\frac{1}{t_1-t_0}\int_{t_0}^{t_1}\mu(t)\diff t\bigg) \\
	&= \frac{1}{(t_1-t_0)}\bigg\{\frac{1}{n}\sum_{i=1}^{\lfloor \lambda n\rfloor}\eps_{T_i,n}\id(t_0\leq T_i/n\leq t_1) + \frac{1}{n}\sum_{i=1}^{\lfloor \lambda n\rfloor}\mu\big(\tfrac{T_i}{n}\big)\id(t_0\leq T_i/n\leq t_1)- \lambda\int_{t_0}^{t_1} \mu(t)\diff t\bigg\}\\
	&= \frac{1}{(t_1-t_0)n}\sum_{i=1}^{\lfloor \lambda n\rfloor}\eps_{T_i,n}\id(t_0\leq T_i/n\leq t_1) + \Oc(b_n/n)~.
	\end{align*}
Consequently Assumption \ref{assump:gn}  holds with $\omega(x)=(t_1-t_0)^{-1} \id(t_0\leq x\leq t_1)$.

\medskip

Finally, for a proof of part (iv) note that it follows
from  Lemma \ref{thm:GaussianApprox} and \ref{thm:GaussianApproxRem}   that
	\begin{align*}%\label{eq:example_assump_gn2}
	\begin{split}
		\lambda \sqrt{n} \big(\hat{g}_n(\lambda)-g(\mu)\big)
            	&= g\Big(\lambda\sqrt{n}\big(\tilde{\mu}_{h_n}(\lambda,\cdot)-\mu\big)\Big)\\
            	&=\frac{1}{\sqrt{n}} \sum_{i=1}^{\lfloor \lambda n\rfloor} \eps_{T_i,n} g\big(\omega'(T_i/n, \cdot)/h_n\big)+o_\pr(1),
        \end{split}\end{align*}
        by linearity of $g$, where the function $\omega' $ is defined by
        \begin{multline*}
        	\omega'(T_i/n,t) = K_{h_n}^*\big(\tfrac{T_i}{n}-t\big)\id_{[h_n,1-h_n]}(t)\\+\Big\{2c_{\frac{h_n}{\sqrt{2}},i}(t)K_{\frac{h_n}{\sqrt{2}}}\big(\tfrac{T_i}{n}-t\big)-c_{h_n,i}(t)K_{h_n}\big(\tfrac{T_i}{n}-t\big)\Big\} \id_{[0,h_n)\cup(1-h_n,1]}(t).
	\end{multline*}
        Note    $K_{h_n}$ and $K_{h_n}^*$ are Lipschitz continuous with constant $C_k/{h_n}$ where $C_K$ is the Lipschitz constant of $K$ and $K^*$. In particular, if $\supp(K)\subset[-1,1]$, it holds
        \begin{align}\label{eq:example_assump_gn}\begin{split}
	        \Big|\omega'\Big(\tfrac{jb_n}{n},t\Big)-\omega'\Big(\tfrac{jb_n+r}{n},t\Big) \Big|
        	&\leq C_k\frac{r}{nh_n}\id\Big(t\in[\tfrac{jb_n}{n}-h_n,\tfrac{jb_n}{n}+h_n]\cup[\tfrac{jb_n+r}{n}-h_n,\tfrac{jb_n+r}{n}+h_n]\Big) \\
	        &\leq C_k\frac{r}{nh_n}\id\Big(t\in[\tfrac{jb_n}{n}-h_n,\tfrac{(j+1)b_n}{n}+h_n]\Big)\\
	        &= C_k\frac{r}{nh_n}\id\Big((t-h_n)\tfrac{n}{b_n}-1\leq j\leq (t+h_n)\tfrac{n}{b_n}\Big),
	\end{split} \end{align}
        for any $j\in \{1,\dots, \ell_n\}$ and $r\in\{1,\dots,b_n\}$. Since $g$ is bounded,
        the function $\omega_n(x) := g\big(\omega'(x, \cdot)/h_n\big)$ therefore satisfies \eqref{eq:omega}, that is
        $$\sum_{j=1}^{\ell_n}  \sum_{r = 1}^{b_n} \big|\omega\big(\tfrac{jb_n}{n}\big)-\omega\big(\tfrac{r+jb_n}{n}\big)  \big|
        \leq \sum_{j=1}^{\ell_n}  \sum_{r = 1}^{b_n} \frac{\|g\|_{op}}{h_n} \big\|\omega'\big(\tfrac{jb_n}{n},\cdot\big)-\omega'\big(\tfrac{r+jb_n}{n},\cdot\big)  \big\|_2
        = \Oc(b_nh_n^{-1}).
        $$
        To complete the argument, note that the assumption  $\|\omega_n - \omega \|_4 \to 0$ is only needed in the proof of Theorem \ref{thm:convSN} to establish
 the convergence  in \eqref{eq:Riemann_conv}. However,  with $\omega=h_g$, this argument can now be obtained directly
noting  that  the  continuity of $h_g$  implies for any $j\in \{\lceil 2\ell_n h_n\rceil,\dots, \lfloor \ell_n (1-2h_n)\rfloor\}$
	\begin{align*}%\label{eq:omega_n}
	\omega_n(j/\ell_n)& = \langle h_g, \omega'(j/\ell_n, \cdot)/h_n\rangle
	= \frac{1}{h_n} \int_{I_n} h_g(x) \omega'(j/\ell_n, x) \diff x + o(1)\\
	&= \frac{1}{h_n} \int_{I_n} h_g(x) K_{h_n}^*(j/\ell_n- x) \diff x + o(1)\\
	&= h_g(j/\ell_n) \int_{I_n}  \frac{1}{h_n} K_{h_n}^*(j/\ell_n- x) \diff x + o(1)\\
	&= h_g(j/\ell_n) \int_{-1}^1   K^*(x) \diff x + o(1)
	= h_g(j/\ell_n) + o(1)	.
	\end{align*}
	This gives 
	\begin{align*}
	\frac{1}{\ell_n} \sum_{j=1}^{\ell_n} \sigma^2(j/\ell_n)d_{\omega_n}^2(j/\ell_n)
	&= \frac{1}{\ell_n} \sum_{j=1}^{\ell_n} \sigma^2(j/\ell_n)\Big(f_\tau(j/\ell_n)d(j/\ell_n)+\omega_n(j/\ell_n)\int_0^1 d(x)\tau(\diff x)\Big)^2\\
	&=\frac{1}{\ell_n} \sum_{j=1}^{\ell_n} \sigma^2(j/\ell_n)\Big(f_\tau(j/\ell_n)d(j/\ell_n)+h_g(j/\ell_n)\int_0^1 d(x)\tau(\diff x)\Big)^2+o(1),
	\end{align*}
	which converges to $\|d_\omega \sigma\|_2^2$.

\subsection{Proof of Corollary \ref{cortest}} \label{seca.3}
If  $\|d_\omega\sigma\|_2 >0$ the corollary follows immediately from Theorem \ref{thm:convSN}  or Remark \ref{rem1} since
$$ \pr\bigg( \frac{\hat{d}_{2,n}^2(1)-\Delta^2}{\int_\zeta^1 \lambda |\hat{d}_{2,n}^2(\lambda) -\hat{d}_{2,n}^2(1) |\diff \nu(\lambda)} >  q_{1-\alpha} \bigg)\to \left\{\begin{array}{l}0,~\text{if } d_0<\Delta\\ \alpha,~\text{if } d_0=\Delta\\ 1,~\text{if } d_0>\Delta. \end{array}\right. $$
If   $\|d\sigma\|_2 = 0$ , it follows   $d\equiv 0$ by Assumption \ref{assumption:LS} (3), and in this case the probability to reject the null hypothesis
by the decision rule \eqref{eq:test} converges to $\pr(0>\Delta^2)=0$.

\end{document}